\DeclareMathAlphabet{\mathpzc}{OT1}{pzc}{m}{it} 
\newcolumntype{L}{>{\centering\arraybackslash}m{5cm}}
\LetLtxMacro{\OldSqrt}{\sqrt}
\newcommand{\ClosedSqrt}[1][\hphantom{3}]{\def\DHLindex{#1}\mathpalette\DHLhksqrt}
\newcommand*\bold@name{bold}
\def\DHLhksqrt#1#2{%
	\setbox0=\hbox{$#1\OldSqrt{#2\,}$}\dimen0=\ht0\relax%
	\advance\dimen0-0.2\ht0\relax
	\setbox2=\hbox{\vrule height\ht0 depth -\dimen0}%
	{\hbox{$#1\expandafter\OldSqrt\expandafter[\DHLindex]{#2\,}$}
		\lower\ifx\math@version\bold@name0.6pt\else0.4pt\fi\box2}
}
\renewcommand*{\sqrt}[2][\ ]{\ClosedSqrt[\leftroot{-2}\uproot{1}#1]{#2}\kern0.1em} 
\newcommand{\removelatexerror}{\let\@latex@error\@gobble}
\newcommand\numberthis{\addtocounter{equation}{1}\tag{\theequation}}
\theoremstyle{definition}
\newtheorem{definition}{Definition}
\newtheorem{assumption}{Assumption}
\theoremstyle{plain}
\declaretheorem[name=Theorem]{thm}
\declaretheorem[name=Lemma]{lem}
\declaretheorem[name=Proposition]{prop}
\declaretheorem[name=Corollary]{cor}
\theoremstyle{remark}
\newtheorem{remark}{Remark}
\newtheorem{example}{Example}
\DeclareMathOperator{\dom}{dom}
\DeclareMathOperator{\diag}{diag}
\DeclareMathOperator{\Jacob}{\mathbf{J}}
\DeclareMathOperator{\interior}{int}
\DeclareMathOperator{\rinterior}{rint}
\DeclareMathOperator{\closure}{cl}
\DeclareMathOperator{\range}{ran}
\DeclareMathOperator{\E}{\mathbb{E}}
\DeclareMathOperator{\J}{\mathbf{J}}
\DeclareMathOperator{\gJ}{\boldsymbol{\mathfrak{J}}}
\DeclareMathOperator{\I}{\mathpzc{I}}
\DeclareMathOperator{\arcsinh}{arcsinh}
\begin{document}
\title{Second-Order Mirror Descent: Convergence in Games Beyond Averaging and Discounting}
\author{Bolin Gao and Lacra Pavel%
	\thanks{This work was supported by a grant from NSERC and Huawei Technologies Canada.}
	\thanks{B. Gao and L. Pavel are with the Department of Electrical and Computer Engineering, University of Toronto, Toronto, ON, M5S 3G4, Canada. Emails:
	{\tt\small bolin.gao@mail.utoronto.ca, pavel@control.utoronto.ca}}%
}

\maketitle

\IEEEpeerreviewmaketitle
\begin{abstract} In this paper, we propose a second-order extension of the continuous-time game-theoretic mirror descent (MD) dynamics, referred to as MD2, which provably converges to mere (but not necessarily strict) variationally stable states (VSS) without using common auxiliary techniques such as time-averaging or discounting. We show that MD2 enjoys no-regret as well as an exponential rate of convergence towards strong VSS upon a slight modification. MD2 can also be used to derive many novel continuous-time primal-space dynamics. We then use stochastic approximation techniques to provide a convergence guarantee of discrete-time MD2 with noisy observations towards interior mere VSS. Selected simulations are provided to illustrate our results.  
\end{abstract}

\section{Introduction}

A central problem of multi-agent online learning is the design of adaptive policies which a set of subscribing agents (or players) can utilize to arrive at a desired collective outcome. These adaptive policies can typically be viewed as the following iterative process: an \textit{information processing step}, whereby game-relevant information is made available to the player and then collected for processing, followed by a \textit{decision step}, whereby the processed information is converted into the next strategy. A unified mathematical codification of this two-step process that emerged in recent years is referred to as \textit{mirror descent}, first studied by \cite{Nemirovsky83} and subsequently by \cite{Beck03, Doan, Krichene15, Duchi, Yu20, Tsianos, Raginsky12, Nesterov09} in the context of optimization. 

For \textit{games}, mirror descent is often analyzed as a set of ordinary differential equations (ODEs), referred to as the continuous-time mirror descent dynamics (MD), which can be seen as the multi-agent extension of \cite[p. 87]{Nemirovsky83}. MD is also referred to as dual averaging (DA) in continuous games \cite{MZ2019, Staudigl17} and Follow-the-Regularized-Leader (FTRL) or exponential learning in finite games \cite{Flokas, Mertikopoulos16, Mertikopoulos18}. The core appeal of MD lies in its flexibility in adapting to a wide variety of problem settings as well as its rich theoretical properties. In particular, MD is known to converge to the Nash equilibrium (NE) in terms of its generated strategies under a trio of \textit{strict} assumptions:
\begin{itemize}
	\item[(i)]  the game is \textit{strictly monotone}, i.e., the pseudo-gradient of the game is a strictly monotone operator\cite{Facchinei_I}. The class of strictly monotone games captures games that admit strictly concave potential functions, as well as saddle-point problems with strictly convex, strictly concave saddle functions, a special case of diagonally strict concavity of \cite{Rosen65}.
	\item[(ii)]  the NE is \textit{strictly variationally stable} (in the sense of \cite{MZ2019}). The class of games with a strictly variationally stable NE (also known as \textit{strictly variationally stable games} \cite{Zhou_Lossy_18}) contains the class of strictly monotone games, as well as strictly coherent saddle point problems \cite{Optimistic_MD}. 
	\item[(iii)] the NE is \textit{strict} \cite{Harsanyi}, which coincides with a locally strictly variationally stable NE in finite games \cite{MZ2019}. 
\end{itemize} 

Despite these theoretical guarantees, the applicability of MD and its variants remains limited, as many games found in practice do not conveniently exhibit these strict properties. In other words, MD does not converge in many games, most notably in zero-sum (ZS) finite games that feature a unique interior NE \cite{Mertikopoulos18}. \cite{Hofbauer_Stable} showed that no two-player ZS finite game is strictly monotone. Furthermore, strictness is also intimately related to the uniqueness of the game equilibrium. For instance, a strictly monotone game admits a unique NE \cite{Facchinei_I}. In practice, however, many games exhibit a convex set of equilibria as opposed to a unique one. This means strictness imposes serious constraints on the problem parameters or types. Moreover, many dynamics or algorithms that are directly derived from MD (through discretization or otherwise) share similar limitations in that some type of strictness needs to be assumed to ensure convergence, e.g., \cite{MZ2019, Mertikopoulos16, MigotCojocaru2020, Zhou_Counter, Zhou_Lossy_18, Staudigl17, Flokas, Giannou21}. 

To achieve convergence beyond strictness in a continuous-time setup, the existing literature typically relies on two primary approaches: \textit{averaging} and \textit{discounting}. The first method utilizes the time-averaged (\textit{ergodic}) strategies generated by MD as opposed to the actual strategies \cite{Mertikopoulos16}. However, this approach has two drawbacks. Firstly, the players need to combine their time-averaged strategies in order to recover the NE, which is unrealistic in non-cooperative settings. Secondly, time-averaging could fail outside of ZS games \cite{Unstable_Equilibria}. Another method is via a \textit{discounting} procedure \cite{Bo_LP_TAC2020, Bo_Pavel_TechNote_2021, Coucheney}, which is conceptually related to the \textit{weight decay} method of \cite{Krogh92}. Discounting can be seen as a regularization technique that makes use of a strictly convex function to offset poor game properties, such as the lack of strict monotonicity. However, in general, discounting cannot yield exact convergence \cite{Coucheney, Bo_LP_TAC2020, Bo_Pavel_TechNote_2021}. Since \textit{averaging} and \textit{discounting} represent two of the most widely studied method for improving the convergence properties of continuous-time MD, yet at the same time both have been met with challenges, therefore it is natural for us to set our sights on alternative approaches.

\textbf{\textit{Contributions}}:
In this work, in a game-theoretic setup, we propose a second-order variant of the continuous-time MD (the first-order of which was studied in \cite{Raginsky12, Krichene15, Staudigl17, Mertikopoulos16, Mertikopoulos18, Flokas}), which we refer to as MD2. \textit{Second-order} means that the set of ODEs is second-order in \textit{time}, thus MD2 can be seen as a dual-space formulation of the heavy-ball method \cite{Polyak} and is closely related to the class of $n$th order discounted game dynamics of \cite{Bo_LP_TAC2020}, adapted here in a more general game setting. Like MD, we show that MD2 satisfies the basic assumption of \textit{no-regret}. However, unlike MD, MD2 converges without using averaging even when the NE is not strictly variationally stable. Moreover, unlike the discounted MD \cite{Bo_Pavel_TechNote_2021}, which partially overcomes the convergence issue of MD at the cost of inexact convergence, MD2 converges \textit{exactly}. Exact convergence via a primal-space, second-order pseudo-gradient-type dynamics has been achieved in (merely) monotone games \cite{Dian_LP_CDC2020}. We provide a dual-space generalization of this result, beyond the monotone game setting, into the case where the equilibria are \textit{merely variationally stable}. Furthermore, MD2 can exactly recover the unconstrained, primal-space dynamics of \cite{Dian_LP_CDC2020} in the full-information case. Finally, we use our continuous-time convergence results to guide the proof of convergence of a \textit{discrete-time} MD2. In gist, we show that higher-order dynamics can converge towards Nash solutions with more general stability properties. 

\textbf{\textit{Related Works}}: This paper incorporates several ideas from the variational stability, higher-order dynamics, and stochastic (semi-bandit) Nash-seeking literature. The concept of a variationally stable state (VSS) has roots in evolutionary game theory, and it is motivated by and analogous to that of an evolutionarily stable state (ESS) \cite{Smith}. Namely, an ESS is a locally strict VSS for games with one or more populations of agents, also known as population games (similarly, a global ESS or GESS is a globally strict VSS) \cite{Hofbauer_Stable}. Many evolutionary game dynamics such as the replicator and projection dynamics have been shown to converge towards NEs of strictly stable games, which are GESS \cite{Hofbauer_Stable, MigotCojocaru2020}. In contrast to ESS, VSS is defined for the more general class of continuous games \cite{MZ2019}. The literature on strict VSS seeking without structural property such as strict monotonicity is relatively recent \cite{MZ2019, Staudigl17, Zhou_Counter, Zhou_Lossy_18, Flokas}. \cite{Staudigl17} showed that DA converges towards a globally strict VSS. \cite{Zhou_Lossy_18, Zhou_Counter} studied online gradient descent (discrete MD with projection map) where the feedback is received asynchronously between agents.

In contrast to a strict VSS, the class of \textit{mere VSS} is motivated by the more general notion of a neurally stable state (NSS) and global NSS (GNSS) \cite{Smith82}. Comparatively speaking, the set of literature that deals with convergence towards a mere VSS is fewer, especially in the absence of structural assumptions such as the game being merely monotone (also known as stable game \cite{Hofbauer_Stable}). In a population game setup, \cite{Hofbauer_Stable} showed that the best response, integrable excess payoff and impartial pairwise comparison dynamics converge globally to the set of NEs in (merely) stable games, which are GNSS. With a few exceptions such as \cite{Optimistic_MD}, the current set of literature on the (non-ergodic) convergence towards a mere VSS outside of a finite/population game setup typically also assumes monotonicity of the game \cite{Shanbhag12, Tatarenko19, Dian_LP_CDC2020}. In contrast to the above references, in this work we generalize the convergence behavior of MD to non-strict NE (i.e., a mere VSS) without any structural assumption, through the use of higher-order augmentation.

Higher-order dynamics was pioneered by Polyak in \cite{Polyak}, whose algorithm later became widely known as the \textit{heavy-ball method} (HB) \cite{Polyak}. HB was recognized as an analogue of a second-order ODE upon its inception and has since been heavily investigated in the optimization literature \cite{Attouch2020} and arguably forms the backbone of \textit{deep learning}, where non-convexity and local minima dominate \cite{Sutskever}. Inspired by \cite{Polyak}, the authors of \cite{Flam} studied a second-order dynamics for continuous concave games, which can be seen as a second-order extension of the projection dynamics of \cite{Nagurney} and showed that their dynamics converge whenever there exists a Lipschitz and bounded potential function. In a finite game setup, \cite{Laraki13} studied $n$th-order variant of exponential learning \cite{Mertikopoulos16}, whereby the payoff is processed via successive integration, and showed that these higher-order learning schemes can achieve faster rate of elimination of iteratively dominated solutions and convergence towards strict NE. An important connection between higher-order dynamics and the stability of a NE was established by \cite{Shamma_anticipatory}, which showed that the addition of an \textit{anticipatory} process to first-order gradient-play and replicator dynamics can result in local convergence towards an interior (non-strict) NE, despite the dynamics themselves being \textit{uncoupled}. For continuous-time equilibrium seeking via a higher-order augmentation in the dual-space, the closest work related to ours is the second-order \textit{discounted} exponential learning scheme in \cite{Bo_LP_TAC2020}, which overcomes the problem with non-convergence towards interior NE of \cite{Laraki13} at the cost of inexact convergence. It was found empirically in \cite{Bo_LP_TAC2020} that such higher-order augmentation improves the convergence property of its first-order counterpart and converges in non-strictly monotone games where the first-order fails. We improve \cite{Bo_LP_TAC2020} by generalizing these results from finite games to continuous games and go beyond a monotone game setup while simultaneously achieving exact convergence. Exact convergence (also known as \textit{last iterate convergence}) via a primal-space, second-order pseudo-gradient dynamics was achieved in merely monotone games \cite{Dian_LP_CDC2020}, which we also generalize, by stripping away monotonicity assumptions and showing that MD2 encapsulates the dynamics of \cite{Dian_LP_CDC2020} as a special case.  

In the discrete-time, semi-bandit setup, whereby each player receives a (possibly) noise-corrupted version of the pseudo-gradient, the closest work related to ours is \cite{MZ2019}. \cite{MZ2019} studied the first-order MD, known as DA therein, and showed that under imperfect gradient setup, DA converges to a globally variationally stable NE under diminishing step-sizes and its ergodic average converges to the set of NE in 2-player ZS games. Several variations to DA have been studied in the semi-bandit literature, whereby their convergence is towards either strict (strong) VSS or strict NE. For instance, in finite games, \cite{Cohen17} studied a Hedge-variant of exponential weight algorithm (HEDGE) and provided convergence when the NE is strongly VS with respect to the $L^1$ norm. In a similar setup, \cite{Coucheney} studied a discounted variant of HEDGE for potential games. A similar analysis was also performed by \cite{Giannou21} for FTRL in finite games. \cite{Shanbhag13} provided several primal-space algorithms for merely monotone games. \cite{Optimistic_MD} studied the optimistic MD algorithm, which converges in coherent games, i.e., two-player games that possess mere VSS. Compared to optimistic MD, discrete-time MD2 only requires one mirror projection instead of two prox projections. Furthermore, optimistic MD cannot converge in the presence of noise \cite{Optimistic_MD} unless the game is strictly coherent (a stronger assumption), whereas the discrete-time MD2 can converge under noisy conditions.

\textbf{\textit{Organization}}: We provide the background materials in Section II. Section III reviews several basic properties of the first-order MD. We discuss the convergence properties of MD2 in Section IV and provide the rate of convergence as well as  regret minimization guarantee in Section V.  We derive associated primal-space dynamics in Section VI. Section VII discusses MD2 in discrete-time with noisy observations. Simulations are presented in Section VIII. Section IX presents our conclusions. For readability, all proofs are relegated to Section X. A table of main notations used in this paper (\autoref{table:1}) is provided in the Appendix.

\vspace{-0.1cm}
\section{Background}

The following material for convexity and duality are drawn from \cite{Beck17,Wets09}. Those for game theory are drawn from \cite{Basar, Facchinei_I, Ozdaglar}. 

\label{sectionbackground} 

\subsection{Sets and Vectors}
Given a convex set $\mathcal{C} \subseteq \mathbb{R}^n$, the (relative) interior of the set is denoted as ($\rinterior(\mathcal{C})$) $\interior(\mathcal{C})$. 
$\rinterior(\mathcal{C})$ coincides with $\interior(\mathcal{C})$ whenever $\interior(\mathcal{C})$ is non-empty. The closure of a set $\mathcal{C}$ is denoted as $\closure(\mathcal{C})$. We denote the non-negative orthant of $\mathbb{R}^n$ as $\mathbb{R}^n_{\geq 0}$, and the positive orthant as $\mathbb{R}^n_{>0}$. A column vector in $\mathbb{R}^n$ is denoted as $x = (x_1, \ldots, x_n) = \begin{bmatrix} x_1, \ldots, x_n \end{bmatrix}^\top$. $\mathbf{1}$ and $\mathbf{0}$ denote the column vector all ones and all zeros.   $\mathpzc{I}_{n \times n}$ and $\mathpzc{O}_{n \times n}$ denote the $n \times n$ identity and zero matrices. Subscript is omitted when the dimensionality is unambiguous. Suppose $n$ is a natural number, then $[n] \coloneqq \{1, \ldots, n\}$.

\subsection{Convex Functions and Duality}

Let $\mathcal{M} = \mathbb{R}^n$ be endowed with norm $\|\cdot\|$ and dot product $\langle \cdot, \cdot \rangle$. An extended real-valued function is a function $f$ that maps from $\mathcal{M}$ to $[-\infty, \infty]$. The (effective) domain  of $f$ is $\dom(f) = \{x \in \mathcal{M} : f(x) < \infty\}$.
A function $f: \mathcal{M} \to [-\infty, \infty]$ is proper if $f(x) \neq -\infty, \forall x$ and there exists at least one $x \in \mathcal{M}$ such that $f(x)  < \infty$; $f$ is closed if its epigraph is closed. A function $f: \mathcal{M} \to [-\infty, \infty]$ is supercoercive if $\lim_{\|x\| \to \infty} f(x)/\|x\| \to \infty$. The indicator function over $\mathcal{C}$ is denoted by $\delta_{\mathcal{C}}$. The normal cone of $\mathcal{C}$ is defined as, $N_\mathcal{C}(x) =\{v \in \mathbb{R}^n| v^\top(y-x) \leq 0, \forall y \in \mathcal{C}\}$. The tangent cone of a non-empty convex set $\mathcal{C}$ at $x \in \mathcal{C}$ is $T_{\mathcal{C}}(x) = \overline{\bigcup_{\lambda \in \mathbb{R}_{>0}} \lambda (\mathcal{C} - x)}$ and equals $\emptyset$ for all $x \notin \mathcal{C}$. $Id$ denotes the identity function. 

Recall that $\pi_{\mathcal{C}}(x) =  {\text{argmin}}_{y \in \mathcal{C}} \|y - x\|_2^2$ is the Euclidean projection of $x$ onto $\mathcal{C}$, where we refer to $\pi_\mathcal{C}$ as the Euclidean projector. 
Let  $\partial f(x)$ denote a the set of subgradients of $f$ at $x$ and  $\nabla f(x)$ the gradient of $f$ at $x$. For differentiation on the boundary of a closed set $\mathcal{C}$, in lieu of the subgradient, we can also assume $f$ is defined and differentiable on an open set containing $\mathcal{C}$. Given $f$, the function $f^\star \!: \!\mathcal{M}^\star \! \to \! [-\infty, \infty]$ defined by $f^\star(z) \!=\! \sup_{x  \in \mathcal{M}} \! \big[x^\top z \!- \!f(x)\big]$, 
is called the conjugate function of $f$, where  $\mathcal{M}^\star\!$ is the dual space of $\mathcal{M}$, endowed with the dual norm $\| \cdot\|_\star $.   $f^\star$ is closed and convex if $f$ is proper. By the conjugate subgradient theorem \cite{Beck17}, suppose $f: \mathbb{R}^n \to(-\infty, +\infty]$ is proper, closed and convex and $f^\star$ is its Fenchel conjugate, then for any $x, z \in \mathbb{R}^n$, $x^\top z = f(x) + f^\star(z)  \Longleftrightarrow z \in \partial f(x)   \Longleftrightarrow x \in \partial f^\star(z).$ The Bregman divergence of a proper, closed, convex function $f$ is $D_f : \dom(f) \times \dom(\partial f) \to \mathbb{R}, D_f(x,y) =  f(x) -  f(y) -  g^\top(x - y), g \in \partial f(y)$.  Given a vector-valued function $F$, the Jacobian of $F$ is denoted as $\mathbf{J}_F$.

\subsection{N-Player Concave Games}

Let $\mathcal{G} = (\mathcal{N}, \{\Omega^p\}_{p \in \mathcal{N}}, \{\mathcal{U}^p\}_{p\in \mathcal{N}})$ be a game, where  $\mathcal{N} = \{1, \ldots, N\}$ is the set of players, $\Omega^p \subseteq \mathbb{R}^{n_p}$ is the set of player $p$'s strategies (actions). We denote the strategy (action) set of player $p$'s opponents as $\Omega^{-p} \subseteq \prod_{q \in \mathcal{N}, q \neq p} \mathbb{R}^{n_q}$. We denote the set of all the players strategies as $\Omega =   \prod_{p \in \mathcal{N}} \Omega^{p} \subseteq \prod_{p \in \mathcal{N}} \mathbb{R}^{n_p} = \mathbb{R}^{n}, n = \sum_{p \in \mathcal{N}} n_p$. We refer to  $\mathcal{U}^p: \Omega \to \mathbb{R}, x \mapsto \mathcal{U}^p(x)$ as player $p$'s real-valued payoff function, where $x  = (x^p)_{p \in \mathcal{N}} \in \Omega$ is the action profile of all players, and $x^p \in \Omega^p$ is the action of player $p$. We also  denote $x$ as $x  = (x^p;x^{-p})$ where $x^{-p} \in \Omega^{-p}$ is the action profile of all players except $p$. For differentiability purposes, we make the implicit assumption that there exists some open set, on which $\mathcal{U}^p$ is defined and continuously differentiable, such that it contains $\Omega$.

\begin{assumption}\label{assump:blanket} For all $p \in \mathcal{N}$, $\Omega^p$ is a non-empty, closed, convex, subset of $\mathbb{R}^{n_p}$. $\mathcal{U}^p(x^p;x^{-p})$ is (jointly) continuous in $x = (x^p;x^{-p})$. $\mathcal{U}^p(x^p;x^{-p})$ is concave and continuously differentiable ($\mathcal{C}^1$) in $x^p$ for all $x^{-p} \in \Omega^{-p}$. 
\end{assumption} \vspace{-0.23cm}
 Under \autoref{assump:blanket}, we refer to $\mathcal{G}$ as a \textit{(continuous) concave game}. Given $x^{-p} \in \Omega^p$, 
each agent $p \in \mathcal{N}$ aims to find the solution of the following optimization problem,  
\begin{equation}
	\begin{aligned}
		& \underset{x^p}{\text{maximize}}
		& &  \mathcal{U}^p(x^p; x^{-p})
		& \text{subject to}
		& & x^p \in \Omega^p.
	\end{aligned}
\end{equation}
A profile 	${{x}}^\star = ({{x}^p}^\star)_{p \in \mathcal{N}}$  is a Nash equilibrium (NE) if, 
\begin{equation}\label{eqn:Nash_definition}
	\mathcal{U}^p({x^p}^\star; {x^{-p}}^\star) \geq \mathcal{U}^p(x^p; {x^{-p}}^\star), \forall x^p \in \Omega^p, \forall p \in \mathcal{N}.
\end{equation}
At a NE, no player can increase its payoff by unilateral deviation. 

\begin{remark}\label{assump:1} Under Assumption \ref{assump:blanket}, existence of a NE is guaranteed for bounded $\Omega^p$ \cite[Theorem 4.4]{Basar}. 
When $\Omega^p$ is closed but not bounded, the existence of a NE is guaranteed under the additional assumption that  $-\mathcal{U}^p$ is coercive in $x^p$, that is, 
$	\textstyle \lim_{\|x^p\|\to\infty} -\mathcal{U}^p(x^p;x^{-p}) = +\infty,$ 
for all $x^{-p} \in \Omega^{-p}, p \in \mathcal{N}$ \cite[Corollary 4.2]{Basar}. A NE is said to be \textit{strict} when the inequality in \eqref{eqn:Nash_definition} is strict \cite{Harsanyi}. 
\end{remark} 
A useful characterization of a NE of a concave game $\mathcal{G}$ is given in terms of the \textit{pseudo-gradient} \cite{Rosen65}, which is defined  as, \begin{equation} U: \Omega \to \mathbb{R}^n, U(x) = (U^p(x))_{p \in \mathcal{N}}, \end{equation} where 
$U^p(x) =\nabla_{x^p} \mathcal{U}^p(x^p;x^{-p})$ is the \textit{partial-gradient}.
By \cite[Proposition 1.4.2] {Facchinei_I}, $x^\star \in \Omega$ is a NE iff,
\begin{equation}
	(x-x^\star )^\top U(x^\star) \leq 0, \forall x \in \Omega. 
	\label{eqn:nash_equilibrium_VI}
\end{equation}	
Equivalently, $x^\star$ is a solution of the Stampacchia variational inequality 
$\text{VI}(\Omega, -U)$,  \cite{Facchinei_I}. Following \cite{MigotCojocaru2020}, we say that a NE is globally strict if the inequality of \eqref{eqn:nash_equilibrium_VI} is held strictly for all $ x^\star \neq x$.

\subsection{Monotonicity}
A general class of games in which many dynamics are guaranteed to converge is the class of monotone games, also known as stable games \cite{Hofbauer_Stable, Sandholm} or dissipative games \cite{Sorin16}. We contrast some known definitions associated with monotone games in the literature. 
\begin{definition}
	\label{def:monotone_games}
	The game $\mathcal{G}$ is, 
	\begin{itemize}[noitemsep,topsep=0pt]
		\item[(i)] $\eta$-strongly monotone if 
		$		\!(U(x) \!-\! U(x^\prime))\!^\top\!(x\!-\!x^\prime) \!\leq \! -\eta\|x \!-\! x^\prime\|^2_2$, $\! \forall x,x^\prime  \!\in \!\Omega$, for some $\eta \!>\! 0$.
		\item[(ii)] strictly monotone if
		$		(U(x) \!-\! U(x^\prime))^{\!\top}\!(x\!-\!x^\prime) \!<\! 0, \forall x \in \Omega\backslash\{x^\prime\}$, with equality if and only if $x = x^\prime$. 
		\item[(iii)] (merely) monotone if 
		$		(U(x) - U(x^\prime))^\top(x-x^\prime) \leq 0, \forall x, x^\prime \in \Omega. $
		\item[(iv)] $\mu$-weakly monotone if 
		$		\!(U(x) \!-\! U(x^\prime))\!^\top\!(x\!-\!x^\prime) \!\leq \! \mu\|x \!-\! x^\prime\|^2_2$, $\! \forall x,x^\prime  \!\in \!\Omega$, for some $\mu \!>\! 0$.
	\end{itemize} 
\end{definition}
Strongly and strictly monotone games have been extensively investigated in the literature, at least dating from \cite{Rosen65}. Merely monotone games have been studied in \cite{Shanbhag12, Shanbhag13, Tatarenko19, Bo_Pavel_TechNote_2021, Bo_LP_TAC2020, Dian_LP_CDC2020, Hofbauer_Stable}. Weakly monotone games were considered in \cite{Bo_LP_TAC2020, Persis2020, Matt_Pavel}. When $U$ is $\mathcal{C}^1$, there exists a  natural characterization in terms of definiteness of its Jacobian \cite[p. 155, Prop. 2.3.2]{Facchinei_I}. Note that strictly monotone games can have at most one NE, whereas NEs can form a non-singleton convex set in merely monotone games \cite{Facchinei_I}.  

\subsection{Variational Stablility}
Although many classical examples of games satisfy monotonicity properties \cite{Hofbauer_Stable}, these conditions may not hold in more complex scenarios. A recent line of research has started to relax monotonicity notions from a game to that of an equilibrium via the notion of a \textit{variationally stable} (VS) equilibrium or state \cite{MZ2019}.  
\begin{definition} A Nash equilibrium $x^\star \in \Omega$ is, 
	\begin{itemize}[noitemsep,topsep=0pt]
		\item[(i)]  $\eta$-strongly VS if $U(x)^\top(x-x^\star) \leq  -\eta \|x- x^\star\|_2^2, \forall x \in \Omega,$  for some $\eta \!>\! 0$. 
		\item[(ii)] strictly VS if $U(x)^\top(x-x^\star) \leq 0, \forall x \in \Omega$, with equality if and only if $x = x^\star$. 
		\item[(iii)] (merely) VS if $U(x)^\top(x-x^\star) \leq 0, \forall x \in \Omega.$
		\item[(iv)] $\mu$-weakly VS if $U(x)^\top(x-x^\star) \leq  \mu \|x- x^\star\|_2^2, \forall x \in \Omega,$  for some $\mu \!>\! 0$.
	\end{itemize}
	If a condition (i) - (iv)  hold on $\mathcal{D} \subset \Omega$, then the associated definition is said to hold \textit{locally}. Otherwise the definition is said to hold globally. 
	\label{def:VS}
\end{definition}
 
\begin{remark} 
	\label{remark:weak_vss} 
We refer to $x^\star$ as a globally strong/strict/mere/weak \textit{variationally stable state} (VSS) if it satisfies one of the corresponding VS notions in \autoref{def:VS} on all of $\Omega$; $x^\star$ is a locally strong/strict/mere/weak VSS otherwise. For practical reasons, we will informally refer $\mu$-weak VSS with a small $\mu$ as a \textit{nearly} mere VSS, i.e., a weak VSS within a small distance away from becoming a mere VSS.
\end{remark}

Globally mere VSS are the solutions to Minty variational inequality \cite{Facchinei_I}. In a population game context, the set of globally mere VSS is called GNSS and (the unique) globally strict VSS is called GESS \cite{MigotCojocaru2020, Hofbauer_Stable, Sandholm}. Strict VSS was extended to a local/global set-wise definition  in \cite{MZ2019}. \cite{Zhou_Counter} introduced a slight variation of strict VSS called $\lambda$-VS. \cite{Mazumdar} studied a local version of strict VSS under the name \textit{locally asymptotically stable differential NE}, which requires twice continuous-differentiability of the payoff functions.  Strong VSS (\autoref{def:VS}(i)) was also studied in \cite{MZ2019}. \cite{Cohen17} studied a variant of the strong VSS with $L^1$ norm.

It can be seen that any NE of the strongly/strictly/merely/weak monotone game is a strong/strict/mere/weak VSS. In particular, the non-empty set of NE coincides with the set of mere VSS for merely monotone concave games \cite[Theorem 2]{MigotCojocaru2020}. When $\interior(\Omega) \neq \emptyset$ and $x^\star$ is an interior NE, i.e., $U(x^\star) = \mathbf{0}$, the condition for VS recovers the condition associated with monotone at $x^\prime = x^\star$. Hence VS can be seen as a type of point-wise monotonicity and it is known to have a second-order characterization similar to that for monotone games, but specifically at the NE, see \cite{MZ2019}. 

Recall that a NE $x^\star$ is \textit{interior} if it lies in the interior of $\Omega$, that is, $x^\star \in \rinterior(\Omega)$. Throughout this work, we make the following assumption.
\begin{assumption}
	$\mathcal{G}$ admits an interior mere VSS.
\end{assumption}

\subsection{Second-Order Characterization of VSS}
In practice, outside of monotone games, it is often difficult to verify that a NE is a particular type of VSS directly through \autoref{def:VS}. A more common approach to characterize the VSS conditions of a solution is by looking at the \textit{symmetric game Jacobian} whenever the pseudo-gradient $U$ is $\mathcal{C}^1$, which is given by,
\begin{equation} \label{def:game_jacobian} \gJ_U(x) \coloneqq \frac{1}{2}(\Jacob_U(x) + \Jacob^\top_U(x)), x \in \Omega. \end{equation} 
The  matrix $\gJ_U(x) \in \mathbb{R}^{n \times n}$ is symmetric and thus guaranteed to have real eigenvalues, which makes it amenable to analysis. 

We now provide several sufficient conditions for verifying whether a VSS $x^\star$ is strict, mere or weak. This is performed by checking the definiteness of $\gJ_U(x)$ or $\gJ_U(x^\star)$. Our condition for strict VSS is the same as that in \cite{MZ2019}. Before proceeding, we say that the (symmetric) game Jacobian $\gJ_U(x) \in \mathbb{R}^{n \times n}$ \eqref{def:game_jacobian} is negative definite on $T_{\Omega}(x)$ if,
\begin{equation}
	y^\top \gJ_U(x)y  < 0, \forall  x \in \Omega, \forall y \in T_{\Omega}(x), y \neq \mathbf{0},
\end{equation}
and negative semi-definite on $T_{\Omega}(x)$ if the preceding inequality is non-strict. 
We use the shorthand notations $\gJ_U(x) \prec \mathpzc{O}$ and  $\gJ_U(x) \preceq \mathpzc{O}$ respectively. For $\mu > 0$,  $\gJ_U(x) - \mu \mathpzc{I} \prec \mathpzc{O}$ is written as $\gJ_U(x) \preceq \mu \mathpzc{I}$. Similar conventions for when $x = x^\star$.

\begin{prop}
	\label{prop:second_order_char}
	Let $x^\star \in \Omega$ be a NE of $\mathcal{G}$. Suppose $U$ is continuously differentiable, and,
	\begin{enumerate}[align=parleft, labelsep=0.65cm]
		\item[(i)] $\gJ_U(x) \prec \mathpzc{O}$ on $T_{\Omega}(x), \forall x \in \Omega$, then $x^\star$ is globally strictly VS and isolated. 
		\item[(ii)] $\gJ_U(x) \preceq \mathpzc{O}$  on $T_{\Omega}(x), \forall x \in \Omega$, then $x^\star$ is globally merely VS. 
		\item[(iii)] $\gJ_U(x) \preceq \mu \mathpzc{I}$  on $T_{\Omega}(x), \forall x \in \Omega$, then $x^\star$ is globally $\mu$-weakly VS. 
	\end{enumerate}
	Suppose instead, 
	\begin{enumerate}[align=parleft, labelsep=0.65cm]
		\item[(i')]  $\gJ_U(x^\star) \prec \mathpzc{O}$  on $T_{\Omega}(x^\star)$, then $x^\star$ is locally strictly VS and isolated.
		\item[(ii')] $\gJ_U(x^\star) \preceq \mathpzc{O}$  on $T_{\Omega}(x^\star)$, then $x^\star$ is locally merely VS. 
		\item[(iii')] $\gJ_U(x^\star) \preceq \mu \mathpzc{I}$  on $T_{\Omega}(x^\star)$, then $x^\star$ is locally $\mu$-weakly VS. 
	\end{enumerate}
\end{prop}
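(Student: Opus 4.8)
The plan is to reduce every claim to the sign of a single scalar integral obtained by differentiating the pseudo-gradient along the segment joining $x^\star$ to a competing point. First I would fix $x\in\Omega$, write $d\coloneqq x-x^\star$ and $x_t\coloneqq x^\star+t\,d$, which lies in $\Omega$ for $t\in[0,1]$ by convexity, and set $g(t)\coloneqq U(x_t)^\top d$. Since $U$ is $\mathcal{C}^1$, $g$ is $\mathcal{C}^1$ with $g'(t)=d^\top\Jacob_U(x_t)\,d$, and because a quadratic form sees only the symmetric part, $g'(t)=d^\top\gJ_U(x_t)\,d$. The endpoints are $g(1)=U(x)^\top d$ and $g(0)=U(x^\star)^\top d\le 0$, the latter being exactly the VI characterization \eqref{eqn:nash_equilibrium_VI} of the NE $x^\star$. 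The fundamental theorem of calculus then gives
\[ U(x)^\top d = g(0) + \int_0^1 d^\top\gJ_U(x_t)\,d\,dt \;\le\; \int_0^1 d^\top\gJ_U(x_t)\,d\,dt. \]
Finally, for $t\in[0,1)$ one has $d=\tfrac{1}{1-t}(x-x_t)\in T_{\Omega}(x_t)$, so each matrix hypothesis may legitimately be applied to the integrand.

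The global statements then fall out by inserting the three hypotheses. Under (i), $\gJ_U(x_t)\prec\mathpzc{O}$ on $T_{\Omega}(x_t)$ makes the integrand strictly negative on $[0,1)$ whenever $x\neq x^\star$, so the integral is negative and $x^\star$ is strictly VS; under (ii) the integrand is $\le 0$, giving merely VS; under (iii) the abbreviation $\gJ_U(x_t)\preceq\mu\mathpzc{I}$ bounds the integrand by $\mu\|d\|_2^2$, giving $\mu$-weakly VS. For isolatedness in (i) I would note that strict VS forbids a second NE: if $y\neq x^\star$ were a NE, then applying \eqref{eqn:nash_equilibrium_VI} at $y$ with test point $x^\star$ yields $U(y)^\top(y-x^\star)\ge 0$, contradicting strict VS, so $x^\star$ is the unique and hence isolated NE.

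For the local strict cases (i') and (iii') the same integral is used, but the matrix hypothesis is available only at $x^\star$; here the point is that strict definiteness is an \emph{open} condition. Because $T_{\Omega}(x^\star)$ is a closed cone, its intersection with the unit sphere is compact, so continuity of $d\mapsto d^\top\gJ_U(x^\star)d$ together with the strict hypothesis (recall $\gJ_U(x^\star)\preceq\mu\mathpzc{I}$ abbreviates the strict $\gJ_U(x^\star)-\mu\mathpzc{I}\prec\mathpzc{O}$) produces a uniform margin, e.g. $d^\top\gJ_U(x^\star)d\le-\alpha\|d\|_2^2$ on $T_{\Omega}(x^\star)$ in case (i'). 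Continuity of $x\mapsto\gJ_U(x)$ propagates this margin, halved, to all $x$ in a relative neighborhood $\mathcal{D}=B(x^\star,r)\cap\Omega$. Since $d\in T_{\Omega}(x^\star)$ and $x_t\in\mathcal{D}$ for $x\in\mathcal{D}$, the displayed bound yields $U(x)^\top d\le-\tfrac{\alpha}{2}\|d\|_2^2$ on $\mathcal{D}$ (respectively $\le\mu\|d\|_2^2$ for (iii')); thus $x^\star$ is locally strictly VS — in fact strongly VS — and, by the uniqueness argument restricted to $\mathcal{D}$, isolated.

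The step I expect to be the genuine obstacle is the local merely-VS case (ii'), where $\gJ_U(x^\star)\preceq\mathpzc{O}$ is non-strict. Negative semidefiniteness is \emph{not} an open condition, so the compactness/continuity mechanism above delivers only $\gJ_U(x)\preceq\varepsilon\mathpzc{I}$ on a neighborhood that shrinks as $\varepsilon\downarrow 0$; feeding this into the integral gives merely $U(x)^\top d\le\varepsilon\|d\|_2^2$, which never secures the required sign $\le 0$. The reason is structural: a null direction of $\gJ_U(x^\star)$ can be destabilized by the third-order behaviour of $U$ along the segment, so the integrand need not remain $\le 0$ near $x^\star$. To close (ii') as stated I would therefore need the semidefiniteness to persist on an entire neighborhood of $x^\star$ — in which case the integration argument handles it exactly as in the global case — and I would flag the bare point condition at $x^\star$ as the weak link of the non-strict local claim.
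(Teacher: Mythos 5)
Your global argument is the paper's proof: the paper invokes the integral identity $(x-x^\prime)^\top(U(x)-U(x^\prime))=\int_0^1 (x^\prime-x)^\top \gJ_U(\overline x)(x^\prime-x)\,\mathrm{d}\theta$ (citing \cite[Prop.~1.44]{Ozdaglar}), sets $x^\prime=x^\star$, discards $(x-x^\star)^\top U(x^\star)\leq 0$ via \eqref{eqn:nash_equilibrium_VI}, and reads off (i)--(iii) from the sign of the integrand --- exactly your $g(0)+\int_0^1 g^\prime$. Your explicit check that $d=\tfrac{1}{1-t}(x-x_t)\in T_\Omega(x_t)$ is a point the paper leaves implicit, and your self-contained uniqueness argument for isolatedness replaces the paper's citation of \cite[Prop.~2.7]{MZ2019}; both are sound.

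On the local cases you diverge from, and improve on, the paper. The paper propagates definiteness from $x^\star$ to a ball by arguing that for every $\epsilon>0$ there is $\delta(\epsilon)>0$ with $y^\top\gJ_U(x)y<\epsilon+y^\top\gJ_U(x^\star)y$ for $\|x-x^\star\|<\delta(\epsilon)$, and then concluding, ``since this holds for every $\epsilon>0$,'' that $y^\top\gJ_U(x)y\leq y^\top\gJ_U(x^\star)y$ on a fixed neighborhood --- precisely the quantifier slip you anticipated: $\delta(\epsilon)$ shrinks as $\epsilon\downarrow 0$, so the limiting inequality is secured only at $x=x^\star$ itself. Your compactness-margin repair (uniform bound on $T_\Omega(x^\star)\cap S^{n-1}$, halved on a ball) is what legitimately rescues (i') and (iii'), both of which are strict conditions under the paper's convention that $\gJ_U\preceq\mu\mathpzc{I}$ abbreviates $\gJ_U-\mu\mathpzc{I}\prec\mathpzc{O}$. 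And the obstacle you flag at (ii') is a genuine defect of the paper's claim, not of your attempt: the paper's own three-player example with $U(x)=-(x^2x^3,\,x^1x^3,\,x^1x^2)$ on $\Omega=[-1,1]^3$ has $\gJ_U(x^\star)=\mathpzc{O}$ at the interior NE $x^\star=\mathbf{0}$, so the hypothesis of (ii') holds, yet $U(x)^\top(x-x^\star)=-3x^1x^2x^3=3t^3>0$ along $x=(t,t,-t)$, so $x^\star$ fails to be merely VS on every neighborhood --- exactly the third-order destabilization of a null direction you describe. Your proposed strengthening (semidefiniteness of $\gJ_U$ on a neighborhood of $x^\star$, after which your global integration argument applies verbatim on that neighborhood) is the correct repair of (ii').
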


\begin{remark}
	These conditions can be verified by calculating the maximum eigenvalue of $\gJ_U(x)$ (resp. $ \gJ_U(x^\star)$), where  $\lambda_\text{max}(\gJ_U(x))$ (resp. $\lambda_\text{max}(\gJ_U(x^\star))$) denotes the (real) eigenvalue with the largest magnitude associated with an eigenvector in $T_{\Omega}(x)$ (resp. $T_{\Omega}(x^\star)$). When $\mathbf{J}_U(x)$ (resp. $ \mathbf{J}_U(x^\star)$) is symmetric, then analysis can be performed directly on $\mathbf{J}_U$ without resorting to calculating $\gJ_U$. 
\end{remark}

\begin{remark} 	
	Note that, even if $x^\star$ is shown to be $\mu$-weak through \autoref{prop:second_order_char}(iii) or (iii'), it does not preclude the possibility that $x^\star$ could in fact be strict or mere due to the looseness of the bound in the proof of \autoref{prop:second_order_char}. Furthermore, unlike strict VSS, mere VSS need not be isolated. 
	The next examples illustrate various notions of variational stability. 
\end{remark} 

\begin{example}(\textbf{Monotone game with a mere VSS})
	\label{example:two_player_zs_game} 
	Every NE of a merely monotone game is globally mere VSS. Perhaps the simplest example of a merely monotone game is the so-called bilinear saddle point problem, whereby, suppose we have a saddle function, $f(x^1, x^2) = x^1x^2$, $x^p \in \mathbb{R}$ for which we want to minimize in $x^1$ and maximize in $x^2$. We can cast this saddle point problem as a game by utilizing two payoff functions $\mathcal{U}^1(x^1; x^2) = -x^1x^2$ and $\mathcal{U}^2(x^2; x^1) = x^1x^2$. This game has a pseudo-gradient of $U(x) = (-x^2, x^1)$ and a game Jacobian $\gJ_U(x) = \mathpzc{O}$. By \autoref{prop:second_order_char}(ii), the unique interior NE $x^\star = (0,0)$ is the globally mere VSS. 
\end{example}

\begin{example}(\textbf{Non-monotone potential game with a mere VSS})
	This example shows that the mere monotonicity enjoyed by \autoref{example:two_player_zs_game} can be destroyed by the addition of another player, even when all the player's payoff functions remain linear in its own argument. 
	Considered a three-player game, whereby, 	\begin{equation}
		\textstyle \mathcal{U}^p(x^p; x^{-p}) = -x^p \prod_{q \neq p} x^{q}, \quad p \in \{1, 2, 3\},
	\end{equation} and each $x^p \in [-1, 1]$, i.e., $\Omega = [-1, 1]^3$. For this game, the pseudo-gradient is $U(x) = -(x^2x^3, x^1x^3, x^1x^2)$ which shows that there exists an interior NE at $x^\star = (0,0,0)$. The Jacobian, which is symmetric, is,
	\[\Jacob_U(x) = -\begin{bmatrix} 0&  x^3 &x^2 \\ x^3 & 0&  x^1 \\ x^2&  x^1 & 0 \end{bmatrix},
	\]
	hence this game is non-monotone in general and in fact possesses a non-convex and non-concave \textit{potential function} $P(x) = -x^1x^2x^3$, i.e., $\nabla P = U$. Since $\Jacob_U(x^\star) = \mathpzc{O}$ is negative semi-definite, hence by \autoref{prop:second_order_char}(ii'), $x^\star$ is locally merely VS. Note that if we restrict the strategy set to $\Omega = [0,1]^3$ instead, every NE $x^\star \in \{x \in \Omega| x^p = x^q= 0, p, q \in \mathcal{N}, p \neq q\}$ is a globally mere VSS, as $U(x)^\top (x - x^\star) = -3x^1x^2x^3 \leq 0, \forall x \in [0,1]^3$. 
\end{example} 

\begin{example}(\textbf{RPS with non-negative payoff for ties})
	\label{example:rps_game}
Consider a two-player Rock-Paper-Scissors (RPS) game with $\mathpzc{A}$ and $\mathpzc{B}$ being the payoff matrices for player $1$ and $2$ respectively, 
\begin{equation}  \mathpzc{A} = \begin{bmatrix} \varsigma & -\mathpzc{l} &\mathpzc{w} \\ \mathpzc{w} & \varsigma & -\mathpzc{l} \\ -\mathpzc{l} & \mathpzc{w} & \varsigma \end{bmatrix}, \mathpzc{B} = \mathpzc{A}^\top, \end{equation} 		
where $\mathpzc{l}, \mathpzc{w} \geq 0$ are the values associated with a loss or a win and $\varsigma \in \mathbb{R}$ is the payoff of a tie. The strategy set associated with this game is the simplex $\Omega^p = \{ x^p \in \mathbb{R}^{n_p}| \sum_{i = 1}^{n_p} x^p_i = 1, x^p_i \geq 0\}$. The pseudo-gradient (as known as  payoff vector \cite{Hofbauer_Stable, Sandholm}) and the game Jacobian are as follows,  \begin{equation} \label{example:rps_pseudo_grad_jacobian} U(x) = \begin{bmatrix} \mathpzc{O} &  \mathpzc{A} \\ \mathpzc{A} & \mathpzc{O} \end{bmatrix} \begin{bmatrix}   x^1 \\ x^2 \end{bmatrix} \quad  \gJ_U(x) =  \dfrac{1}{2}  \begin{bmatrix} \mathpzc{O} &  \mathpzc{A} +   \mathpzc{A}^\top \\ \mathpzc{A} +  \mathpzc{A}^\top& \mathpzc{O} \end{bmatrix}\!. \end{equation}  

To simplify our analysis, consider an example where $l = 0$. In this game, $x^\star = ({x^p}^\star)_{p \in \mathcal{N}}, {x^p}^\star = (1/3, 1/3, 1/3)$ is the unique interior NE for any $\varsigma \in [0, w)$, $\mathpzc{w} > 0$. The eigenvalues of $\gJ_U(x)$ on $T_\Omega(x)$ are $\{-(\varsigma + \mathpzc{w}), \pm (\varsigma - \mathpzc{w}/2), \pm (\varsigma - \mathpzc{w}/2)\}$, hence this game is $\mu$-weakly merely monotone with $\mu = \varsigma-\mathpzc{w}/2$ whenever $\varsigma \in (\mathpzc{w}/2,  \mathpzc{w})$. When $\varsigma = \mathpzc{w}/2$,  the game is merely monotone (\autoref{def:monotone_games}). Taken together, this means for $\varsigma \in (\mathpzc{w}/2,  \mathpzc{w})$, $x^\star$ is a unique, locally $\mu$-weak VSS where $\mu  = \varsigma- \mathpzc{w}/2$, whereas for $\varsigma = \mathpzc{w}/2$, $x^\star$ is a unique, globally mere VSS. Finally, note that when $\mathpzc{l} = \mathpzc{w} = \varsigma = 0$ (and hence both $\mathpzc{A}, \mathpzc{B}$ are zero matrices), every strategy is globally merely VS and forms a convex set (the strategy set itself).
\end{example} 

\section{Review of First-Order MD Dynamics}
\label{section:first_order_MD} 
We now describe the dynamic process that a group of agents utilizes in order to arrive at one of the equilibrium notions discussed in previous sections. One such general model is the mirror descent (MD) dynamics. Intuitively, the family of MD dynamics ascribes an abstract behavior model to each player, which states that, upon receiving the partial-gradient of its payoff function, each player processes the partial-gradient information (typically via an aggregation), then converts the processed information into the next strategy. This process can be described by the following set of ODEs  \cite{Staudigl17, Mertikopoulos16, Mertikopoulos18, Flokas},
\begin{equation}
	\vspace{-0.1cm}
	\dot z^p =   \gamma \nabla_{x^p} \mathcal{U}^p(x^p; x^{-p}) = \gamma U^p(x), \,\,\, x^p = C^p_\epsilon(z^p),
	\label{eqn:MD_p}
	\tag*{(MD)}
\end{equation}
where $z^p$ is referred to as score or dual variable, $\gamma > 0$ is a rate parameter, and $C^p_{\epsilon}: \mathbb{R}^{n_p}  \to \Omega^p$  is referred to as the \textit{mirror map}, 
\begin{equation} 
	\label{eqn:mirror_map_argmax_char} 
	C^p_{\epsilon}(z^p) = \text{argmax}_{y^p \in \Omega^p}\left[ {y^p}^\top z^p  - \epsilon \vartheta^p(y^p)\right], \epsilon > 0.
\end{equation} 
Here, $\vartheta^p: \mathbb{R}^{n_p} \to \mathbb{R}\cup\{\infty\}$ is assumed to be a closed, proper and (at least) strictly convex function, referred to as a \textit{regularizer}, where $\dom(\vartheta^p)$ is assumed be a non-empty, closed and convex set, which agrees with the strategy set $\Omega^p$, $\epsilon > 0$ is referred to as the regularization constant. The regularizer often satisfies one of the following distinct assumptions that ensure the existence of a unique solution of MD and \eqref{eqn:mirror_map_argmax_char}:
\begin{assumption}
	\label{assump:regularizer}
	$\vartheta^p\!:\! \mathbb{R}^{n_p} \!\!\to \!\mathbb{R}\!\cup\!\{\!\infty \!\}$ is closed, proper, convex, with $\dom(\vartheta^p) = \Omega^p$ non-empty, closed and convex. In addition, $\vartheta^p$ is, 
		(i) Legendre (i.e., strictly convex, steep, $\interior(\dom(\vartheta^p))\! \neq \!\emptyset$) and supercoercive, or (ii)  $\rho$-strongly convex, $\rho > 0$.
\end{assumption}
We note that for $\vartheta^p$ to be \textit{steep}, it means that $\|\nabla \vartheta^p(x_k^p)\|_\star \to +\infty$, $\|\cdot\|_\star$ is the dual norm, whenever $\{x_k^p\}_{k = 1}^\infty$ is a sequence in $\rinterior(\dom(\vartheta^p)) = \rinterior(\Omega^p)$ converging to a point in the (relative) boundary. Let ${\psi^p_\epsilon}^\star$ be the convex conjugate of ${\psi^p_\epsilon}$, $\psi^p_\epsilon \coloneqq \epsilon \vartheta^p$. Then by \autoref{lem:mirror_map_properties} and \autoref{lem:mirror_map_properties_legendre} (in the Appendix), under \autoref{assump:regularizer}, $C^p_\epsilon = \nabla {\psi^p_\epsilon}^\star$. When $\vartheta^p$ is steep, $C^p_\epsilon$ maps from $\mathbb{R}^{n_p}$ to all values in $\Omega^p$ except those at the boundary; $C^p_\epsilon$ could map to boundary points otherwise. We refer to $C^p_\epsilon$ as the \textit{mirror map induced by $\psi^p_\epsilon$} and \textit{Legendre} or \textit{strongly-induced} (by $\psi^p_\epsilon$) when $\vartheta^p$ satisfies Assumption 2(i) or (ii). Note that the notions of Legendre and strongly convex need not be dichotomous. It is possible for a $C^p_\epsilon$ to be both Legendre and strongly-induced. The following examples capture the three types of mirror maps that are extensively used in the literature \cite{Raginsky12, Krichene15, Coucheney, Staudigl17, Mertikopoulos16, Flokas, Mertikopoulos18, Bo_LP_TAC2020, Bo_Pavel_TechNote_2021, Zhou_Lossy_18}.
\begin{example}
	\label{ex:unconstrained}
	Let $\Omega^p = \mathbb{R}^{n_p}$, $\vartheta^p(x^p) = 	\frac{1}{2 \epsilon}\|x^p\|_2^2$, hence it satisfies \autoref{assump:regularizer}(i), (ii). In this case, $C^p_\epsilon = Id$ and it is both Legendre and strongly-induced. 
\end{example}

\begin{example}
	\label{exmp:softmax}
	Let $\Omega^p  = \{x^p \in \mathbb{R}^{n_p}| \sum_{i = 1}^{n_p} x_i^p = 1, x_i^p \geq 0\}$ (the unit simplex), $\vartheta^p(x^p) = \sum_{i = 1}^{n_p} {x^p_i}^\top \log(x_i^p)$, where we assume $0\log(0) = 0$. $\vartheta^p$ can be shown to be $1$-strongly convex in $\|\cdot\|_2$ (or $\|\cdot\|_1$), steep, hence it satisfies \autoref{assump:regularizer}(ii).  $\textstyle C^p_\epsilon(z^p) = (\exp(\epsilon^{-1} z^p_i) (\sum_{j = 1}^{n_p} \exp(\epsilon^{-1} z^p_j))^{-1})_{i\in[n_p]}$ is the \textit{softmax} function.
\end{example}

\begin{example}
	Let $\Omega^p \subset \mathbb{R}^{n_p}$ be a non-empty, convex, compact set,  assume $\vartheta^p(x^p) = \frac{1}{2}\|x^p\|_2^2$, hence $\vartheta^p$ satisfies \autoref{assump:regularizer}(ii) and is non-steep. Here,  $C_\epsilon^p = \pi_{\Omega^p}$ is the Euclidean projection onto $\Omega^p$. 
\end{example}

We can represent MD in a more compact stacked notation, 
\begin{equation}
	\label{eqn:MD} 
	\dot z  =   \gamma U(x), \quad x = C_\epsilon(z), 
\end{equation}
where $x = (x^p)_{p \in \mathcal{N}}, z = (z^p)_{p \in \mathcal{N}}, C_\epsilon = (C^p_\epsilon)_{p \in \mathcal{N}}, U = (U^p)_{p \in \mathcal{N}}$.	Observe that at rest, MD \eqref{eqn:MD} satisfies, \begin{equation} \label{eqn:MD_rest} \mathbf{0} =  U(\overline x), \quad \overline x = C_\epsilon(\overline z), \quad \overline z \in C_\epsilon^{-1}(\overline x), \end{equation} where $\overline x = ({\overline x^p})_{p \in \mathcal{N}}, \overline z = ({\overline z^p})_{p \in \mathcal{N}}$ are the rest points of MD and $C_\epsilon^{-1} = ((C^p_\epsilon)^{-1})_{p \in \mathcal{N}} = (\partial \psi^p_\epsilon)_{p \in \mathcal{N}}$ is the inverse (or the pre-image) of $C_\epsilon$. The rest condition \eqref{eqn:MD_rest} implies that $\overline x$ is an interior NE. Hence if a trajectory $z(t)$ of MD comes to a rest, $x(t) = C_\epsilon(z(t))$ converges to an interior NE.  We note that the uniqueness of $\overline x$ does not imply the uniqueness of $\overline z$ unless $C_\epsilon$ is Legendre-induced (for which $C_\epsilon$  is one-to-one; this follows from Legendre theorem \cite{Bauschke97}). Hence, in general, the convergence of MD is to a set and not to an equilibrium and $z(t)$ may continue to evolve even after $x(t)$ has reached an equilibrium. Note that rest points are not the only game relevant solutions for which MD may converge to. As pointed out in \cite{Mertikopoulos16}, there are non-rest points that occur on the boundary of $\Omega$ that are asymptotically stable under MD. The following lemma broadly summarizes the key convergence properties of MD (refer to \cite{Staudigl17, Mertikopoulos16} for proofs).

\begin{lem}
	\label{lem:MD}	
	Let $\mathcal{G}$ be a concave game with a globally strict VSS $x^\star$. Suppose that all players choose strategies according to MD \eqref{eqn:MD}. Let $x(t) = (x^p(t))_{p \in \mathcal{N}} = C_\epsilon(z(t))$ be generated by \eqref{eqn:MD} and $C_\epsilon = (C_\epsilon^p)_{p \in \mathcal{N}}$ be induced by $\psi^p_\epsilon \coloneqq \epsilon \vartheta^p$. For any $\gamma, \epsilon > 0$ and any $x(0) = C_\epsilon(z(0)) \in \Omega, z(0) \in \mathbb{R}^n$, 
	\begin{itemize}
		\item[(i)] suppose $\vartheta^p$ satisfies \autoref{assump:regularizer}(i), $\forall p$, then $x(t) = C_\epsilon(z(t))$ converges to $x^\star$, whenever $x^\star$ is  interior.
		\item[(ii)] suppose $\Omega^p$ is compact and $\vartheta^p$ satisfies \autoref{assump:regularizer}(ii), $\forall p$, then $x(t) = C_\epsilon(z(t))$ converges to $x^\star$. 
	\end{itemize}
\end{lem}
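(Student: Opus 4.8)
The plan is to construct a Bregman/Fenchel Lyapunov function on the dual trajectory and show it is strictly decreasing away from $x^\star$. Writing $\psi_\epsilon = (\psi^p_\epsilon)_{p\in\mathcal{N}}$ for the stacked regularizer and $\psi_\epsilon^\star$ for its conjugate, I would track the \emph{Fenchel coupling}
\[
 F(x^\star, z) = \psi_\epsilon(x^\star) + \psi_\epsilon^\star(z) - \langle x^\star, z\rangle ,
\]
which is nonnegative by the Fenchel--Young inequality and vanishes exactly when $x^\star \in \partial\psi_\epsilon^\star(z)$, i.e. when $C_\epsilon(z) = x^\star$, by the conjugate subgradient theorem of Section~II. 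Since $C_\epsilon = \nabla\psi_\epsilon^\star$ (the mirror-map lemmas referenced in the Appendix) and $z(t)\in\partial\psi_\epsilon(x(t))$ along every trajectory, $F(x^\star, z(t))$ coincides with the Bregman divergence $D_{\psi_\epsilon}(x^\star, x(t))$, which is the quantity I would monitor.

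First I would differentiate $F$ along MD~\eqref{eqn:MD}. Using $\dot z = \gamma U(x)$ and $\tfrac{d}{dt}\psi_\epsilon^\star(z) = \langle \nabla\psi_\epsilon^\star(z), \dot z\rangle = \langle x, \dot z\rangle$, the terms collapse to
\[
 \frac{d}{dt}F(x^\star, z(t)) = \gamma\,\langle x(t) - x^\star,\, U(x(t))\rangle .
\]
By the strict variational stability of $x^\star$ (Definition~\ref{def:VS}(ii)) this is $\leq 0$, with equality iff $x(t) = x^\star$, so $F$ is nonincreasing, bounded below by $0$, and hence convergent; this already yields Lyapunov stability of $x^\star$. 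Next I would establish boundedness of the primal trajectory, which is where the two regularizer cases diverge. In case (ii) this is immediate since $x(t)\in\Omega$ is compact, and $\rho$-strong convexity furnishes the domination $F(x^\star, z(t)) \geq \tfrac{\epsilon\rho}{2}\|x(t)-x^\star\|_2^2$, controlling the primal distance by the Lyapunov level. In case (i) the target being interior together with the supercoercivity and steepness of $\vartheta^p$ keeps $C_\epsilon$ well defined with bounded primal sublevel sets, and supplies the \emph{reciprocity} property $F(x^\star, z_k)\to 0 \Leftrightarrow C_\epsilon(z_k)\to x^\star$ needed to translate dual-space convergence back to the primal space.

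Finally, to upgrade convergence of $F$ to convergence of $x(t)$, I would argue that $\liminf_{t\to\infty}\|x(t)-x^\star\|_2 = 0$: if instead $\|x(t)-x^\star\|_2\geq\delta$ for all large $t$, then continuity of $U$ and compactness of the annulus $\{x\in\Omega: \|x-x^\star\|_2\geq\delta\}$ force $\langle U(x), x-x^\star\rangle\leq -c<0$ there (strict VSS gives a strictly negative continuous function attaining its maximum on a compact set), so $\tfrac{d}{dt}F\leq-\gamma c$, contradicting $F\geq 0$. Hence some subsequence satisfies $x(t_k)\to x^\star$; combined with the reciprocity property in case (i), or with the strong-convexity domination together with the monotonicity of $F$ in case (ii), this pins the limit to $x^\star$ and promotes the subsequential limit to the whole trajectory.

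The main obstacle I anticipate is precisely this last step: handling the possible drift of the dual variable $z(t)$ --- which need not converge when $C_\epsilon$ fails to be one-to-one --- and the boundary behavior of the regularizer, so that subsequential convergence of $x(t)$ genuinely forces $F\to 0$ and hence $x(t)\to x^\star$. This is the technical heart separating the Legendre and strongly-convex settings, and it is the reason interiority of $x^\star$ must be imposed in case (i) while compactness of $\Omega^p$ is imposed in case (ii).
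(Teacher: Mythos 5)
Your proposal follows essentially the same route as the source material: the paper does not prove \autoref{lem:MD} itself (it defers to \cite{Staudigl17, Mertikopoulos16}), but the Fenchel-coupling Lyapunov argument you set up is exactly the argument of those references, and it is also the template the paper reuses for \autoref{thm:main}, where the same coupling appears with the extra quadratic term $\tfrac{\alpha}{2\beta}\|\xi - x^\star\|_2^2$ for MD2. Your derivative computation $\tfrac{d}{dt}F = \gamma\langle x - x^\star, U(x)\rangle \leq 0$, the liminf/annulus argument, the identification $F(x^\star,z(t)) = D_{\psi_\epsilon}(x^\star, x(t))$ (cf.\ \autoref{lem:bregman_fenchel_equivalence}), and the role you assign to interiority in case (i) are all correct. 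In case (i), the cleanest way to make your ``bounded primal sublevel sets'' remark precise is the one the paper uses in proving \autoref{thm:main}(i): because $x^\star$ is interior and ${\psi^p_\epsilon}^\star$ is coercive (Legendre theorem), $F(x^\star,\cdot)$ is coercive in $z$, so its sublevel sets are compact and LaSalle applies in the dual space; injectivity of $C_\epsilon$ in the Legendre case then identifies the invariant set with the single point $\nabla\psi_\epsilon(x^\star)$, making your subsequential step unnecessary. The reciprocity you invoke in case (i) does hold, by continuity of $\nabla\psi_\epsilon$ at the interior point $x^\star$, so that half is sound either way.

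The genuine soft spot is the closing step of case (ii). Strong convexity yields only the \emph{lower} bound $F(x^\star,z) \geq \tfrac{\epsilon\rho}{2}\|x - x^\star\|_2^2$; it gives nothing in the converse direction, and for a non-steep regularizer (e.g., the Euclidean projector) with $x^\star$ on the boundary, the implication $x(t_k)\to x^\star \Rightarrow F(x^\star, z(t_k))\to 0$ can fail, since $z(t_k)$ may escape to infinity along near-normal directions while $C_\epsilon(z(t_k))$ converges --- this is precisely why \cite{MZ2019} imposes reciprocity as a hypothesis rather than deriving it. So ``strong-convexity domination together with monotonicity of $F$'' cannot pin down $F_\infty = 0$. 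The standard repair, used in \cite{Staudigl17} and mirrored in the paper's own proof of \autoref{thm:main}(ii), avoids reciprocity entirely: by the conjugate correspondence theorem, $C_\epsilon$ is $(\epsilon\rho)^{-1}$-Lipschitz and $\dot z = \gamma U(x)$ is bounded on the compact set $\Omega$, so $t \mapsto x(t)$ is Lipschitz; each excursion of $x(t)$ from the $\delta$-sphere to the $2\delta$-sphere around $x^\star$ therefore takes time bounded below, during which $\dot F \leq -\gamma c_\delta < 0$ on the compact annulus, so only finitely many excursions can occur before the nonincreasing, nonnegative $F$ would be driven negative. Hence $x(t)$ eventually remains in every neighborhood of $x^\star$, which together with your liminf step gives $x(t)\to x^\star$. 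With that substitution for your final paragraph, the proof is complete.
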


In general, MD does not converge beyond strict VSS, i.e., a mere VSS. This poses considerable limitations in practice, for instance, mere VSS are commonly found in ZS games. \cite{Hofbauer_Stable, Sandholm} showed that every ZS finite game is merely (but not strictly) monotone, hence all of its NEs are mere (non-strict) VSS.  A standard method to overcome non-convergence to the NE in ZS games is to calculate a time-averaged (\textit{ergodic}) trajectory  $x_\text{avg}(t) = t^{-1} \int\nolimits_{0}^t x(\tau) \mathrm{d}\tau$ in tandem with MD, that is,
\begin{equation}
	\label{eqn:MD_avg} 
	\dot z  =   \gamma U(x), \quad x = C_\epsilon(z),  \quad x_\text{avg}(t) = t^{-1} \int\nolimits_{0}^t x(\tau) \mathrm{d}\tau,
	\tag*{(MDA)}
\end{equation}
for which the time-averaged strategy $x_\text{avg}$ has been shown in many contexts to converge, e.g., \cite{Mertikopoulos16}. The main critique of using MDA is that the actual strategies do not arrive at the NE in the long-run, thereby making it unsuitable for on-line equilibrium seeking in the absence of a central planner or coordination between players. Furthermore, averaging may fail to converge outside of ZS games \cite{Unstable_Equilibria}, which makes this approach vulnerable to parameter perturbation.  

Another method for overcoming non-convergence is through \textit{discounting}, which was studied in \cite{Bo_Pavel_TechNote_2021}, 
\begin{equation}
	\label{eqn:DMD} 
	\dot z =   \gamma (-z + U(x)), \quad x = C_\epsilon(z), 
	\tag*{(DMD)}
\end{equation}
where DMD stands for \textit{discounted} mirror descent. Compared to MD, an extra $-z$ term is inserted in the $\dot z$ system, which translates into an exponential weighted decay (or discounting) term in the closed-form solution $z(t)$. DMD has a connection with the so-called \textit{weight decay} method in the machine learning literature \cite{Krogh92}, as $z \in C_{\epsilon}^{-1}(x)$ can be shown to be equivalent to a (usually non-Euclidean) regularization term, which directly interacts with the monotonicity property of $U$. While it is known that DMD could converge exactly in subclasses of finite games that exhibit symmetric interior NEs, whereby $C_\epsilon$ is also chosen to enforce symmetry (see \cite{Bo_LP_TAC2020}), in general, it cannot converge exactly to a NE, which also means that it cannot converge exactly to a VSS. This directs our attention to alternatives methods, such as higher-order augmentation of game dynamics \cite{Bo_LP_TAC2020, Laraki13, Shamma_anticipatory, Flam}. 

\section{Second-Order Mirror Descent Dynamics} 
\label{section:second_order_MD} 
We now propose the \textit{second-order mirror descent}, which in terms of each player $p$ appears as,
\begin{equation}
	\label{eqn:MD2_p}
	\begin{cases}
		\dot z^p &\! =  \gamma (U^p(x) \!-\! \alpha (x^p - \xi^p)),\,\, \quad 	\dot \xi^p \! = \beta (x^p \!-\! \xi^p),\\
		x^p &\! =  C^p_\epsilon(z^p),
	\end{cases}
	\tag*{(MD2)}
\end{equation} 
and in stacked notation,
\begin{equation}
	\label{eqn:MD2}
		\dot z  =  \gamma (U(x) \! - \!\alpha (x \!-\! \xi)),\,\,\quad \dot \xi = \beta (x - \xi),  \quad x =  C_\epsilon(z),
\end{equation} 
where  $x = (x^p)_{p \in \mathcal{N}}$, $z = (z^p)_{p \in \mathcal{N}}$, $\xi = (\xi^p)_{p \in \mathcal{N}}$, $C_\epsilon = (C^p_\epsilon)_{p \in \mathcal{N}}$, $U = (U^p)_{p \in \mathcal{N}}$. The rest point condition for MD2 is,  
\begin{equation}
	\label{eqn:MD2_rest_condition}
	\mathbf{0} =   U(\overline x),  \quad \mathbf{0}  =  \overline x - \overline \xi, \quad \overline x =  C_\epsilon(\overline z), \quad \overline z  \in  C^{-1}_\epsilon(\overline x),
\end{equation} 
which coincides with that of MD, i.e., $\overline x = x^\star$ is an interior NE. 

\begin{remark}(\textbf{Intuitive Learning Interpretation of MD2})
	We can explicitly write  $\xi^p(t) = (\xi^p_i(t))_{i \in [n_p]}$ as,  \begin{equation}
		\label{eqn:individual_xi_p_expression}
		\xi^p_i(t) = e^{{-\beta}t} \xi_i^p(0) + \beta \smallint\nolimits_0^t e^{{-\beta}(t-\tau)} x^p_i(\tau) \mathrm{d}\tau,
	\end{equation}
	which represents an ``exponential weighting" of the strategies. Hence, we refer to $\xi$ as the \textit{primal aggregate}, and $z$ can be referred as the \textit{dual aggregate}. We note that $\xi^p$ resides in the unconstrained (ambient) space $\mathbb{R}^{n_p}$ containing the action set. Note that, whenever $\xi^p(0)$ is initialized in $\Omega^p$, then $\xi^p(t) \in \Omega^p$ for all times. Furthermore, noting that we can re-write $\dot z^p$ subsystem in MD2 as $\dot z^p \! =  \gamma (U^p(x) \!-\! \alpha \beta^{-1} \dot  \xi^p)$. By assuming $z^p_i(0)  = 0, \forall i, p$,
	\begin{align*}
		& z^p_i(t) = \gamma (\smallint\nolimits_0^t U^p_i(x(\tau)) \mathrm{d}\tau -   \alpha \beta^{-1}\smallint\nolimits_{0}^t \dot \xi^p_i(\tau) \mathrm{d}\tau) \numberthis \label{eqn:individual_z_p_expression}\\
		& = \gamma (\smallint\nolimits_0^t U^p_i(x) \mathrm{d}\tau \!-\!   \alpha e^{{-\beta}t}(\beta^{-1}(1\!-\!e^{{\beta}t}) \xi_i^p(0) \!+\! \smallint\nolimits_0^t e^{\beta\tau} x^p_i\mathrm{d}\tau)).
	\end{align*}
	It can be seen, when $t$ is small, the effects of $\xi^p_i(0)$ and $x^p_i$ are at their largest, so the presence of $\dot \xi$ term takes into consideration the uncertainty during the initial periods of play. Hence, the incorporation of  $\dot \xi$ has an \textit{exploratory} effect on the play, which can result in strategies being played more conservatively, preventing the players from immediately reaching a deadlock. 
\end{remark}
\begin{remark}(\textbf{Comparison with Existing Dynamics})
	The closest dynamics related to MD2 is the higher-order exponentially discounted dynamics (H-EXPD-RL) for finite games \cite{Bo_LP_TAC2020}. 	Specifically, when the higher-order augmentation is taken to be a high-pass filter, we obtain,
	\begin{equation}
		\hspace*{-0.5cm}
		\begin{cases}
			\dot z & =  \gamma (U(x) - z - \alpha (x + \xi)),\,\, \quad \dot \xi = \beta (-x - \xi),\\
			x &=  C_\epsilon(z).
		\end{cases}
		\label{eqn:H-EXP-D-RL}
	\end{equation} 
	\eqref{eqn:H-EXP-D-RL} can be seen as the second-order extension of DMD. In \cite{Bo_LP_TAC2020} it was observed that \eqref{eqn:H-EXP-D-RL} is robust to a greater degree of parameter perturbation in monotone games as compared to DMD.
	
	Let's now consider MD2 as a purely second-order ODE in the dual-space. To simplify our calculations, we assume $x(t) = C_\epsilon(z(t))$ is differentiable. By using MD2 and noting that $\dot z \! =  \gamma (U(x) \!-\! \alpha\beta^{-1} \dot  \xi))$, taking a time derivative of $\dot z$ and making the assumption that $U$ and $C_\epsilon$  are $\mathcal{C}^1$, we have, $
	\ddot z  =  \gamma (\J_{U}(x)\dot x  - \alpha\beta^{-1} \ddot \xi)$, 
	which, along with $\ddot \xi = \beta (\dot x - \dot \xi)$, $x = C_\epsilon(z)$,  $\dot x = \J_{C_\epsilon(z)}\dot z$, $\dot \xi = \beta \alpha^{-1} (U(x) - \gamma^{-1} \dot z)$, we obtain,
	\begin{equation} \hspace{-0.2cm}  
	\ddot z  = \gamma \left[\J_{U}(C_\epsilon(z))\J_{C_\epsilon}(z) - \alpha \J_{C_\epsilon}(z) - \beta\gamma^{-1}\mathpzc{I}\right] \dot z + \gamma \beta U(x),
	\end{equation} 
	or by using the chain-rule for Jacobian, 
	\begin{align}	
		\begin{cases} 
			\ddot z & = \gamma \left[\J_{U \circ C_\epsilon}(z) - \alpha \J_{C_\epsilon}(z) - \beta\gamma^{-1} \mathpzc{I}\right] \dot z + \gamma\beta U(x),\\
			x & = C_\epsilon(z),
		\end{cases} 
	\end{align}
	where $U \circ C_\epsilon \coloneqq U(C_\epsilon)$. To the best of our knowledge, most of the existing second-order continuous gradient-type dynamics (see \cite{Attouch2020} and references therein) \textit{cannot} recover MD2 due to the presence of the Jacobian of $C_\epsilon$. This is because in the unconstrained, primal setting (as studied in \cite{Attouch2020, Dian_LP_CDC2020}), $C_\epsilon$ is the identity and hence, $\J_{C_\epsilon}(z) = \I, \forall z$ and its contribution is lumped together with $\beta {\gamma}^{-1}\I$. 
\end{remark}

We proceed to demonstrate that the primal aggregate $\xi$ contributes to the convergence of MD2 beyond strict VSS and such convergence depends on both the property of the regularizer as well as the topological properties of the underlying strategy set. 

\begin{thm}
	\label{thm:main}
	Let $\mathcal{G}$ be a concave game and assume that every interior NE is globally merely VS. Suppose that all players choose strategies according to MD2 \eqref{eqn:MD2}. Let $x = (x^p(t))_{p \in \mathcal{N}} = C_\epsilon(z(t))$ be generated by \eqref{eqn:MD2} and $C_\epsilon = (C_\epsilon^p)_{p \in \mathcal{N}}$ be induced by $\psi^p_\epsilon \coloneqq \epsilon \vartheta^p$. Let $x^\star$ denote an interior mere VSS (possibly from a set of them). For any $\alpha, \beta, \gamma, \epsilon > 0$,  $x(0) = C_\epsilon(z(0)) \in \Omega, z(0), \xi(0) \in \mathbb{R}^n$,
	\begin{itemize}
	\item[(i)] suppose $\vartheta^p$ is twice-continuously differentiable ($\mathcal{C}^2$) and satisfies \autoref{assump:regularizer}(i), $\forall p$, then $x(t) = C_\epsilon(z(t))$ converges to $x^\star$. 
	\item[(ii)] assume $\Omega^p$ is compact and $\vartheta^p$ satisfies \autoref{assump:regularizer}(ii), $\forall p$, then $x(t) = C_\epsilon(z(t))$ converges to $x^\star$. 
	\end{itemize}
\end{thm}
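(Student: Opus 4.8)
The plan is to build a single Lyapunov function out of a dual-space and a primal-space piece, show it is nonincreasing along MD2, and then pin down the limit with a Barbalat-type argument. For the dual piece I would use the Fenchel coupling associated with $\psi_\epsilon = (\psi^p_\epsilon)_{p\in\mathcal{N}}$,
\begin{equation}
F(x^\star, z) = \psi_\epsilon(x^\star) + \psi_\epsilon^\star(z) - {x^\star}^\top z,
\end{equation}
which is nonnegative, equals the Bregman divergence $D_{\psi_\epsilon}(x^\star, x)$ with $x = C_\epsilon(z)$, vanishes exactly when $C_\epsilon(z) = x^\star$, and whose gradient in $z$ is $\nabla\psi_\epsilon^\star(z) - x^\star = C_\epsilon(z) - x^\star = x - x^\star$ by the duality facts recalled in Section II. To this I add a quadratic penalty on the primal aggregate $\xi$ and propose
\begin{equation}
V(z, \xi) = F(x^\star, z) + \frac{\gamma\alpha}{2\beta}\|\xi - x^\star\|_2^2.
\end{equation}

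The decisive computation is $\dot V$ along MD2. Differentiating and inserting $\dot z = \gamma(U(x) - \alpha(x - \xi))$ and $\dot\xi = \beta(x-\xi)$, the two cross terms proportional to $\langle x - x^\star, \xi - x^\star\rangle$ cancel — this is exactly why the weight $\gamma\alpha/\beta$ is chosen — and the surviving combination $(x-x^\star) - (\xi - x^\star) = x - \xi$ collapses into a perfect square, giving the clean identity
\begin{equation}
\dot V = \gamma\, U(x)^\top(x - x^\star) - \gamma\alpha\|x - \xi\|_2^2 \leq 0,
\end{equation}
where the first term is nonpositive precisely because $x^\star$ is a global mere VSS and the second is manifestly so. Thus $V$ is nonincreasing; since $V \geq 0$ this yields boundedness of the trajectory, using supercoercivity of $\vartheta^p$ to bound $x$ via $D_{\psi_\epsilon}(x^\star, x)$ in case (i) and compactness of $\Omega^p$ in case (ii).

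For convergence I would avoid a direct LaSalle argument (the full state need not be precompact, since $z$ may be unbounded in case (i)) and use Barbalat's lemma instead. Integrating $\dot V \leq -\gamma\alpha\|x-\xi\|_2^2$ yields $\int_0^\infty \|x(\tau)-\xi(\tau)\|_2^2\,d\tau < \infty$; together with uniform continuity of $x - \xi$ — obtained from boundedness of the trajectory and the $\mathcal{C}^1$/$\mathcal{C}^2$ regularity of $U$ and $C_\epsilon$, so that $\dot x$ and $\dot\xi$ stay bounded — this forces $\|x(t)-\xi(t)\|_2 \to 0$. Boundedness supplies a limit point $x^\star$ of $x(t)$ along a sequence $t_k$, which by hypothesis is an interior mere VSS and may therefore serve as the base point of $V$. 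Along $t_k$ the coupling $F(x^\star, z(t_k)) \to 0$ by reciprocity of the Fenchel coupling at an interior point, and since $\xi(t_k) - x(t_k) \to 0$ the penalty term vanishes as well, so $V(t_k) \to 0$; as $V$ is nonincreasing and convergent, $V(t) \to 0$ globally, whence $x(t) = C_\epsilon(z(t)) \to x^\star$.

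The step I expect to be the main obstacle is the implication $F(x^\star, z(t)) \to 0 \Rightarrow C_\epsilon(z(t)) \to x^\star$, which is genuinely different in the two cases. In case (ii), $\rho$-strong convexity gives $F(x^\star, z) \geq \tfrac{\epsilon\rho}{2}\|x - x^\star\|_2^2$ and the conclusion is immediate. In case (i) the dual variable $z(t)$ can diverge even while $x(t)$ converges, so one cannot work on $z$ directly; here interiority of $x^\star$ is essential, and one must lean on the Legendre/supercoercivity machinery guaranteeing that a vanishing Fenchel coupling to an interior point forces $\nabla\psi_\epsilon^\star(z(t)) \to x^\star$. Securing uniform continuity of $x - \xi$ in this unbounded-dual regime — the hypothesis behind the Barbalat step, and the reason the $\mathcal{C}^2$ assumption on $\vartheta^p$ enters case (i) — is the most delicate technical point.
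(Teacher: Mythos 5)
Your Lyapunov function is the paper's own: up to the positive factor $\gamma$, your $V$ coincides with \eqref{eqn:thm_main_lyapunov}, and your dissipation identity $\dot V = \gamma\, U(x)^\top(x-x^\star) - \gamma\alpha\|x-\xi\|_2^2$ is exactly \eqref{eqn:MD2_proof_step} (there written as $-\alpha\beta^{-2}\|\dot\xi\|_2^2 = -\alpha\|x-\xi\|_2^2$). The genuine gap is in the limit identification. Barbalat's lemma delivers $x(t)-\xi(t)\to 0$, i.e.\ the trajectory approaches the set $\{x=\xi\}$; but a point of this set is not thereby a rest point, since the rest condition \eqref{eqn:MD2_rest_condition} additionally demands $U(\overline x)=\mathbf{0}$. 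Your sentence ``boundedness supplies a limit point $x^\star$ of $x(t)$ \ldots\ which by hypothesis is an interior mere VSS'' is therefore unsupported: the standing hypothesis upgrades interior \emph{NE} to mere VSS, and you never show the limit point is a NE at all. Closing exactly this hole is the content of the paper's argument: in case (i), LaSalle's principle is applied, and on an \emph{invariant} set contained in $\{x=\xi\}$ one has $\dot\xi\equiv\mathbf{0}$, hence $\dot x = \J_{C_\epsilon}(z)\dot z\equiv\mathbf{0}$, and invertibility of $\J_{C_\epsilon}(z)$ (this is where the $\mathcal{C}^2$-Legendre assumption is actually used, rather than in the uniform-continuity step you assign it to) forces $\dot z=\mathbf{0}$ and so $U(x)=\mathbf{0}$; in case (ii), where $\J_{C_\epsilon}$ need not exist, the paper instead supposes an $\omega$-limit point is not a VSS and shows $\|x-\xi\|_2^2$ stays bounded below on a sequence of time intervals, driving $V\to-\infty$, a contradiction with $V\geq 0$. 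The invariance is essential: $x(t_k)-\xi(t_k)\to 0$ does not yield $U(x(t_k))\to\mathbf{0}$; one needs the entire limiting solution to remain in $\{x=\xi\}$, which is precisely a LaSalle-type statement --- the tool you set out to avoid.

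Moreover, your stated reason for avoiding LaSalle is unfounded, and it undercuts your own Barbalat step. Because $x^\star$ is \emph{interior} to $\Omega=\dom(\psi_\epsilon)$, the map $z\mapsto \psi_\epsilon(x^\star)+\psi_\epsilon^\star(z)-{x^\star}^\top z$ is coercive in $z$; the paper notes exactly this (via the Legendre theorem, $V$ is radially unbounded on $\mathbb{R}^n\times\mathbb{R}^n$), so sublevel sets of $V$ are compact and $(\xi(t),z(t))$ is bounded even in case (i). Conversely, if $z(t)$ really could diverge as you fear, your uniform-continuity hypothesis for Barbalat would itself be in jeopardy, since bounding $\dot x = \J_{C_\epsilon}(z)\dot z$ requires a bound on $\J_{C_\epsilon}$ along the trajectory. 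Your remaining steps --- reciprocity of the Fenchel coupling at an interior base point, re-basing $V$ at the limit point, monotone convergence of $V$ to $0$, and the strong-convexity bound $F(x^\star,z)\geq \tfrac{\epsilon\rho}{2}\|x-x^\star\|_2^2$ in case (ii) --- are sound; once you add the invariance argument on the $\omega$-limit set (available exactly because the state is bounded) to certify that the limit point is an interior NE, your proof collapses into the paper's.
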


We now sharpen \autoref{thm:main} by considering several other classes of games in which MD2 is guaranteed to converge. From \cite{MigotCojocaru2020, Facchinei_I},
\begin{definition}
	The game $\mathcal{G}$ is, 
	\begin{enumerate}[noitemsep,topsep=0pt]
		\item[(i)]  pseudo-monotone (resp. strictly pseudo-monotone), if $U(x^\prime)^\top(x - x^\prime) \leq 0 \implies U(x)^\top (x - x^\prime) \leq 0, \forall x, x^\prime \in \Omega$ (resp. $U(x)^\top (x - x^\prime) < 0$, with equality iff $x = x^\prime$.)
		\item[(ii)] quasi-monotone (resp. strictly quasi-monotone), $U(x^\prime)^\top(x - x^\prime) < 0 \implies U(x)^\top (x - x^\prime) \leq 0, \forall x, x^\prime \in \Omega$ (resp. $U(x)^\top (x - x^\prime) < 0$, with equality iff $x = x^\prime$).
	\end{enumerate}
\end{definition}
Observe that when $x^\prime$ in the above definitions is NE $x^\star$ of a pseudo-monotone game (or a globally \textit{strict NE} of a quasi-monotone game), it automatically implies the associated NE is globally mere VS. Moreover, $x^\star$ is globally strictly VS when the above games are strict. Since MD converges to globally strict VSS \cite{MZ2019}, therefore it converges to NE in strictly pseudo-monotone games and strict NE in strictly quasi-monotone games. Our next corollary, which immediately follows from \autoref{thm:main}, partially generalizes these results to the non-strict setting under MD2. 

\begin{cor}
	\label{cor:convergence_to_NE}
	Let $\mathcal{G}$ be a concave game and assume that every NE is interior. Suppose that all players choose strategies according to MD2 \eqref{eqn:MD2}. Let $x = (x^p(t))_{p \in \mathcal{N}} = C_\epsilon(z(t))$ be generated by \eqref{eqn:MD2} and $C_\epsilon = (C_\epsilon^p)_{p \in \mathcal{N}}$ be induced by $\psi^p_\epsilon \coloneqq \epsilon \vartheta^p$, $\vartheta^p$ is $\mathcal{C}^2$ and satisfies \autoref{assump:regularizer}(i), $\forall p$, then for any $\alpha, \beta, \gamma, \epsilon > 0$ and any $x(0) = C_\epsilon(z(0))$, 
	\begin{itemize}
		\item[(i)] $x(t)$ converges to an interior NE $x^\star$ whenever  $\mathcal{G}$ is merely monotone or pseudo-monotone, and, 
		\item[(ii)] $x(t)$ converges to an interior strict NE $x^\star$ if $\mathcal{G}$ is quasi-monotone, whenever $x^\star$ exists. 
	\end{itemize} 
	The same conclusions hold whenever $\vartheta^p$ satisfies \autoref{assump:regularizer}(ii) and $\Omega^p$ is compact $\forall p$. 
\end{cor}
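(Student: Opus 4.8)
The plan is to obtain the Corollary as a direct specialization of \autoref{thm:main}: in each of the three monotonicity regimes I would show that the interior NE $x^\star$ is a \emph{globally merely} VS state, and then invoke \autoref{thm:main} (part (i) when $\vartheta^p$ satisfies \autoref{assump:regularizer}(i), part (ii) under the compact/strongly-convex alternative) to conclude that the MD2 trajectory $x(t) = C_\epsilon(z(t))$ converges to $x^\star$. The common engine is the Stampacchia VI characterization \eqref{eqn:nash_equilibrium_VI}: at any NE $x^\star$ one has $U(x^\star)^\top(x - x^\star) \leq 0$ for all $x \in \Omega$. Since every NE is assumed interior, each such $x^\star$ automatically meets the interiority requirement of \autoref{thm:main}, so the only thing left to verify in each case is mere variational stability, i.e. $U(x)^\top(x - x^\star) \leq 0$ for all $x \in \Omega$.

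First I would dispatch the merely monotone and pseudo-monotone cases, which are the clean ones. For a merely monotone game, \autoref{def:monotone_games}(iii) with second argument $x^\star$ gives $U(x)^\top(x - x^\star) \leq U(x^\star)^\top(x - x^\star)$, and chaining this with the VI inequality above yields $U(x)^\top(x - x^\star) \leq 0$; hence \emph{every} NE is a global mere VSS, the blanket hypothesis of \autoref{thm:main} is satisfied, and convergence to an interior NE follows (the NE set may be a non-singleton convex set here, and the dynamics selects one such point). For a pseudo-monotone game I would instead feed the VI inequality $U(x^\star)^\top(x - x^\star) \leq 0$ directly into the implication defining pseudo-monotonicity with first argument $x^\star$, which returns $U(x)^\top(x - x^\star) \leq 0$ for all $x$; again every NE is a global mere VSS, completing part (i). In both subcases the argument is a two-line consequence of \eqref{eqn:nash_equilibrium_VI}, and the identical reasoning under \autoref{assump:regularizer}(ii) with compact $\Omega^p$ invokes \autoref{thm:main}(ii).

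The quasi-monotone case (ii) requires the \emph{strict} premise in the defining implication, $U(x^\star)^\top(x - x^\star) < 0 \Rightarrow U(x)^\top(x - x^\star) \leq 0$. Here I would use the strictness of the equilibrium to upgrade the VI inequality to a strict one on $\Omega \setminus \{x^\star\}$ before applying quasi-monotonicity with first argument $x^\star$; this delivers $U(x)^\top(x - x^\star) \leq 0$ for every $x \neq x^\star$ (and trivially at $x = x^\star$), so $x^\star$ is a global mere VSS and \autoref{thm:main} again finishes the argument, with the ``whenever $x^\star$ exists'' proviso recording that existence is not guaranteed in this regime.

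I expect the main obstacle to be the interplay between \emph{strictness} and \emph{interiority} in the quasi-monotone case: at an interior NE the pseudo-gradient vanishes, $U(x^\star) = \mathbf{0}$, so the VI inequality collapses to an equality, and one must be careful about exactly which notion of ``strict NE'' supplies the strict premise needed to trigger quasi-monotonicity, as well as about securing \autoref{thm:main}'s blanket requirement that \emph{every} interior NE --- not merely the distinguished one --- be merely VS (which in this regime one would obtain from essential uniqueness of the strict equilibrium). By contrast, the merely monotone and pseudo-monotone reductions need no such care and follow immediately from \eqref{eqn:nash_equilibrium_VI}.
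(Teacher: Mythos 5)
Your proposal is correct and takes essentially the same route as the paper: the paper offers no separate proof of \autoref{cor:convergence_to_NE}, deriving it ``immediately'' from \autoref{thm:main} via the observation preceding the statement, which is exactly your reduction --- set $x^\prime = x^\star$ in the (pseudo-)monotonicity definitions and chain with the Stampacchia inequality \eqref{eqn:nash_equilibrium_VI} (respectively, feed the globally strict VI inequality into the quasi-monotone implication) to conclude that the relevant NE is a globally mere VSS, then invoke \autoref{thm:main}(i) or (ii) according to which part of \autoref{assump:regularizer} holds. The obstacle you flag in case (ii) is genuine but is inherited from the corollary's statement rather than introduced by your argument: at an interior NE one has $U(x^\star) = \mathbf{0}$, so the VI-sense strict inequality $U(x^\star)^\top(x - x^\star) < 0$ cannot hold at an interior point (and likewise the blanket hypothesis of \autoref{thm:main} that \emph{every} interior NE be merely VS is not verified beyond the distinguished equilibrium), a looseness the paper itself leaves unaddressed, so your attempt is no less complete than the paper's own.
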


\begin{remark}
	We note that the existence of a NE in pseudo-monotone games is guaranteed under our continuous game assumption \cite[Theorem 2.3.5]{Facchinei_I}, possibly requiring $-\mathcal{U}^p$ to be coercive, see \autoref{assump:1}, and the non-empty set of NEs coincides with the set of globally mere VSS \cite[Theorem 2]{MigotCojocaru2020}.  However, \cite{MigotCojocaru2020} pointed out that the existence of a globally merely VS for quasi-monotone games is not guaranteed in general. Instead, a globally merely VS exists under the stronger property of \textit{properly} quasi-monotone \cite[Theorem 3]{MigotCojocaru2020}.
\end{remark}

\section{Additional Convergence Properties of MD2}
\label{section:additional_properties_MD2} 

In this section, we investigate two additional properties of MD2, namely, that of rate of convergence and regret minimization. To account for the geometry of the problem, we provide a \textit{relative} extension to the strong VSS.
\begin{definition}	\label{def:relatively_strongly_vs}
	Let $h: \mathbb{R}^n \to \mathbb{R} \cup \{\infty\}$ be any differentiable, convex function with domain $\dom(h) = \Omega$.  Then $x^\star \in \Omega$ is $\eta$-relatively strongly VS (with respect to $h$) if for all $x \in \Omega$, $U(x)^\top(x-x^\star) \leq -\eta(D_h(x, x^\star) + D_h(x^\star, x)),$  for some $\eta \!>\! 0$, where $D_h$ is the Bregman divergence of $h$.
\end{definition}
We note that \autoref{def:relatively_strongly_vs} is analogous to that of a \textit{relatively strongly monotone} game, which was previously introduced in \cite{Bo_Pavel_SIAM_2022}. Following \cite[Theorem 4.4]{Bo_Pavel_SIAM_2022}, it can be shown that MD converges to a relatively strongly VSS in $\mathcal{O}(e^{-\gamma \eta \epsilon^{-1} t})$ given a strongly-induced $C_\epsilon$ that is adapted to the geometry of this VSS. We now wish to provide a similar result for the convergence of MD2 towards a relatively strong VSS. However, exponential convergence does not follow from \autoref{thm:main}. Instead, we propose an augmented version of MD2 that exhibits exponential convergence,
\begin{equation}
	\label{eqn:MD2_gamma} 
	\begin{cases}
		\dot z & =   (U(x)  - \gamma\alpha (x - \xi)), \qquad \dot \xi = \beta (x - \xi),\\
		x &=  C_\epsilon(z), \qquad \qquad \qquad \qquad \dot \gamma  = -\eta \epsilon^{-1} \gamma, 
	\end{cases}
	\tag*{(MD2$\gamma$)}
\end{equation}
where $\gamma(0) > 0$. Observe that MD2$_\gamma$ is equivalent to the non-autonomous system,
\begin{equation}
	\hspace*{-0.3cm}
	\label{eqn:MD2_gamma_perturbed} 
	\begin{cases}
		\dot z & =  (U(x)  -  e^{-\eta \epsilon^{-1} t} \gamma(0)\alpha (x - \xi)), \quad \dot \xi  = \beta (x - \xi),\\
		x &=  C_\epsilon(z),
	\end{cases}
\end{equation}
whose rest point condition coincides with that of MD2. 
\begin{remark}
	From \eqref{eqn:MD2_gamma_perturbed}, MD2$\gamma$ can be seen as MD with a \textit{vanishing perturbation} $g(t, z, \xi) = e^{-\eta \epsilon^{-1} t} \alpha(0) (x - \xi)$, which allows for the following simple learning interpretation: when the players are aware that the game being played has a $\eta$-strong VSS, they no longer bother with exploring the strategy space during the initial stages and instead discard the extra information represented by $x - \xi$ exponentially fast.    
\end{remark}

\begin{thm}
	\label{thm:MD2_gamma}
	Let $\mathcal{G}$ be a concave game with a unique interior $\eta$-strongly VSS  relative to $h(x) = \sum_{p \in \mathcal{N}} \vartheta^p(x^p)$, denoted by $x^\star$,  where $x = (x^p(t))_{p \in \mathcal{N}} \!=\! C_\epsilon(z(t))$ is generated by MD2$\gamma$ and $C_\epsilon \!=\! (C_\epsilon^p)_{p \in \mathcal{N}}$ is induced by $\psi^p_\epsilon\! \coloneqq  \!\epsilon \vartheta^p$, $\vartheta^p$ satisfies \autoref{assump:regularizer}(ii). Then for any $\alpha, \beta, \epsilon, \eta, \gamma_0\! =\! \gamma(0) \!> \!0$ and any $x_0\! =\! (x^p(0))_{p \in \mathcal{N}} \!=\! C_\epsilon(z(0)), z(0), \xi_0 \!=\!\xi(0) \in \mathbb{R}^n$, $x$ converges to $x^\star$ with the rate,
	\begin{equation}
		\label{eqn:MD2_rate}
		D_h(x^\star, x) \leq e^{-  \eta{\epsilon}^{-1} t}(\dfrac{\alpha\gamma_0}{2\beta \epsilon} \|\xi_0 - x^\star\|_2^2 +  D_h(x^\star, x_0)).
	\end{equation}  
	Furthermore, since $\vartheta^p$ satisfies \autoref{assump:regularizer}(ii), i.e., $\vartheta^p$ is $\rho$-strongly convex, therefore,
	\begin{equation}
		\hspace{-0.2cm}
		\label{eqn:MD2_rate_sc}
		\|x^\star-  x\|_2^2 \leq 2\rho^{-1} e^{- \eta{\epsilon}^{-1} t}(\dfrac{\alpha\gamma_0}{2\beta\epsilon} \|\xi_0 - x^\star\|_2^2  +  D_h(x^\star, x_0)).
	\end{equation}  
\end{thm}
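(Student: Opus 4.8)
The plan is to exhibit a single Lyapunov (energy) function that couples a dual-space divergence with a primal term penalizing the aggregate $\xi$, and to let the built-in decay $\gamma(t)=\gamma_0 e^{-\eta\epsilon^{-1}t}$ (the closed form of $\dot\gamma=-\eta\epsilon^{-1}\gamma$) supply the exponential rate. Concretely, I would introduce the Fenchel coupling $F(x^\star,z)\coloneqq \psi_\epsilon(x^\star)+\psi_\epsilon^\star(z)-{x^\star}^\top z$, where $\psi_\epsilon\coloneqq\epsilon h$ so that $C_\epsilon=\nabla\psi_\epsilon^\star$, and propose
\[ E(t)\;=\;\tfrac{1}{\epsilon}F(x^\star,z)+\tfrac{\alpha\gamma}{2\beta\epsilon}\,\|\xi-x^\star\|_2^2. \]
The first move is the identity linking the dual and primal descriptions: since each $\vartheta^p$ (hence $\psi_\epsilon$) is strongly convex, $\psi_\epsilon^\star$ is $\mathcal{C}^1$ and $x=C_\epsilon(z)=\nabla\psi_\epsilon^\star(z)$ gives $z\in\partial\psi_\epsilon(x)$ by the conjugate subgradient theorem; substituting $\psi_\epsilon^\star(z)=x^\top z-\psi_\epsilon(x)$ collapses $F$ into a Bregman divergence, $F(x^\star,z)=D_{\psi_\epsilon}(x^\star,x)=\epsilon\,D_h(x^\star,x)$, taken with the subgradient $z/\epsilon\in\partial h(x)$. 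Consequently $E(t)=D_h(x^\star,x)+\tfrac{\alpha\gamma}{2\beta\epsilon}\|\xi-x^\star\|_2^2\geq D_h(x^\star,x)$ (as $\gamma(t)>0$), so bounding $E$ suffices.

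Next I would differentiate $E$ along MD2$\gamma$. Using $\nabla_z F=C_\epsilon(z)-x^\star=x-x^\star$ and $\dot z=U(x)-\gamma\alpha(x-\xi)$, the dual part gives $\tfrac{1}{\epsilon}(x-x^\star)^\top U(x)-\tfrac{\gamma\alpha}{\epsilon}(x-x^\star)^\top(x-\xi)$; using $\dot\gamma=-\eta\epsilon^{-1}\gamma$ and $\dot\xi=\beta(x-\xi)$, the primal part gives $-\tfrac{\alpha\gamma\eta}{2\beta\epsilon^2}\|\xi-x^\star\|_2^2+\tfrac{\alpha\gamma}{\epsilon}(\xi-x^\star)^\top(x-\xi)$. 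The decisive step is that the two cross terms telescope to $\tfrac{\alpha\gamma}{\epsilon}(\xi-x)^\top(x-\xi)=-\tfrac{\alpha\gamma}{\epsilon}\|x-\xi\|_2^2\le 0$, eliminating the indefinite dual–primal coupling. Invoking $\eta$-relative strong variational stability, $(x-x^\star)^\top U(x)\le-\eta\big(D_h(x,x^\star)+D_h(x^\star,x)\big)\le-\eta\,D_h(x^\star,x)$, while the term from $\dot\gamma$ coincides exactly with the primal share of $-\tfrac{\eta}{\epsilon}E$. Assembling these yields $\dot E\le-\tfrac{\eta}{\epsilon}E-\tfrac{\alpha\gamma}{\epsilon}\|x-\xi\|_2^2\le-\tfrac{\eta}{\epsilon}E$.

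Finally, Grönwall's inequality gives $E(t)\le e^{-\eta\epsilon^{-1}t}E(0)$; since $E(t)\ge D_h(x^\star,x(t))$ and $E(0)$ equals the bracketed initial quantity, this is precisely \eqref{eqn:MD2_rate}, and \eqref{eqn:MD2_rate_sc} then follows from $D_h(x^\star,x)\ge\tfrac{\rho}{2}\|x^\star-x\|_2^2$, valid because $h$ inherits $\rho$-strong convexity from the $\vartheta^p$. I expect the main obstacle to be bookkeeping rather than a single hard estimate: the coefficient $\tfrac{\alpha\gamma}{2\beta\epsilon}$ on the primal penalty must be tuned so that the $\dot\gamma$ contribution furnishes \emph{exactly} the primal portion of $-\tfrac{\eta}{\epsilon}E$ while the cross terms still cancel. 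The subsidiary items to verify are forward-invariance and well-posedness (so that $x(t)=C_\epsilon(z(t))\in\Omega$ and $\gamma(t)>0$ for all $t$) and the differentiability of $t\mapsto F(x^\star,z(t))$, both guaranteed by the $\mathcal{C}^1$ smoothness of $\psi_\epsilon^\star$ under \autoref{assump:regularizer}(ii).
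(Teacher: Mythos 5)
Your proposal is correct and takes essentially the same approach as the paper: your energy $E(t)$ is exactly $\epsilon^{-1}$ times the paper's Lyapunov function $V(\xi,z,\gamma)$ (Fenchel coupling plus the $\gamma$-weighted primal penalty), and you perform the same cross-term cancellation into $-\tfrac{\alpha\gamma}{\epsilon}\|x-\xi\|_2^2$, invoke the same relative-strong-VS inequality together with the same Fenchel-coupling/Bregman identity (the paper's \autoref{lem:bregman_fenchel_equivalence}), and close with the same Gr\"onwall-type comparison and strong-convexity step. The only difference is bookkeeping: the paper substitutes $\dot z = U(x) - \gamma\alpha\beta^{-1}\dot\xi$ and telescopes via $(\xi - x^\star - x + x^\star)^\top\dot\xi$, whereas you cancel the dual--primal cross terms directly, which is the identical computation.
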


\begin{remark}
	\label{remark:rate_result} 
	The above results confirm our intuition: the upper-bound rate of convergence diminishes faster when an equilibrium is more strongly VSS $(\eta \uparrow)$, when $C_\epsilon$ is induced by an even more strongly convex regularizer ($\rho \uparrow$), when the regularization parameter goes down ($\epsilon \downarrow$), so that $C_\epsilon$ approximates a \textit{best-response map}, or when the extra information $x - \xi$ diminishes more rapidly ($\beta \uparrow$). 
\end{remark}

Next, we turn our attention towards the question of regret minimization. Let us define the \textit{time-averaged (external/static) regret function} of player $p \in \mathcal{N}$ as, $\mathpzc{R}^p: [0, \infty) \to \mathbb{R}$, $\mathpzc{R}^p(0) = 0$,
\begin{equation} \label{eqn:regret}  \mathpzc{R}^p(t)  = \max\limits_{y^p \in \Omega^p} \dfrac{1}{t} \smallint\nolimits_{0}^{t} \mathcal{U}^p(y^p; x^{-p}(\tau)) - \mathcal{U}^p(x(\tau))  \mathrm{d}\tau. \end{equation} 

Intuitively, the regret value at time $t$ represents the time-averaged sum total of the payoff difference between the actual strategy $x(t) = (x^p(t); x^{-p}(t))$ versus the best strategy $(y^p; x^{-p}(t))$ that the player $p$ could have played in hindsight, given any opponents' strategy $x^{-p}(t) \in \Omega^{-p}$.  Player $p$'s dynamics that generate $x^p(t)$ is said to achieve \textit{no-regret} with respect to \eqref{eqn:regret} if  $\textstyle \limsup_{t \to \infty} \mathpzc{R}^p(t) \leq 0$.   

While MD was shown to achieve no-regret in finite games, it cannot converge in ZS finite games with an interior mixed equilibrium \cite{Mertikopoulos18}. In contrast to \cite{Mertikopoulos18, Flokas}, our next result, coupled with \autoref{thm:main}, shows that MD2 achieves both no-regret as well as exact convergence whenever the (interior) equilibrium is a mere VSS.

\begin{thm} \label{thm:no_regret} Let $\mathcal{G}$ be a concave game, where $\Omega^p$ is compact for all $p \in \mathcal{N}$. Then for every continuous trajectory of $x^{-p}(t)$ of opponents of player $p$, each player that uses MD2 achieves no-regret independently from the rest of the players. \end{thm}

\vspace{-0.2cm}

\section{Construction of Second-Order Primal-Space Dynamics}
\label{section:second_order_primal} 
In this section, we show that MD2 can either be used to create new primal-space dynamics (that is, dynamics that solely evolve on $\Omega$) or recover existing ones. To simplify our presentation and to avoid the technicality of non-differentiable mirror maps $C_\epsilon$ (for which examples of induced primal-space dynamics can still be generated through a technical treatment, see the so-called \textit{projection dynamics} discussed in \cite{Mertikopoulos16}), we impose the following general assumption,
\begin{assumption}
	\label{assump:induced_primal} 
	$\vartheta^p\!:\! \mathbb{R}^{n_p} \!\!\to \!\mathbb{R}\!\cup\!\{\!\infty \!\}$ is steep and induces a $\mathcal{C}^1$ mirror map $C_\epsilon  = (C^p_\epsilon)_{p \in \mathcal{N}} = (\nabla \psi^{p\star}_\epsilon)_{p \in \mathcal{N}}$.
\end{assumption}
Under this assumption, taking the time-derivative of $x = C_\epsilon(z)$ shows that MD2 can be written as a pair of differential inclusions,
\begin{equation}
	\label{eqn:primal_first_order_MD2}
	\dot \xi  \in \beta(x - \xi), \quad \dot x  \in \gamma \J_{C_\epsilon}(z) (U(x) - \alpha (x- \xi)),
\end{equation}
where $\gamma > 0, \alpha, \beta \geq 0$ and the Jacobian of $C_\epsilon$ is, \[\J_{C_\epsilon}(z)  \coloneqq  \text{blkdiag}(\J_{C^p_\epsilon}(z^p)) = \begin{bsmallmatrix} \J_{C^1_\epsilon}(z^1) & & \\ & \ddots &  \\ & & \J_{C^N_\epsilon}(z^N) \end{bsmallmatrix} \in \mathbb{R}^{n \times n},\] 
and each $\J_{C^p_\epsilon}(z^p)  = \J_{{\nabla \psi^p_\epsilon}^\star}(z^p)  = \nabla^2 {\psi_\epsilon^p}^\star(z^p) \in \mathbb{R}^{n_p \times n_p}$. 
We note that the inclusions in \eqref{eqn:primal_first_order_MD2} come from  $z \in C^{-1}_\epsilon(x)$. We can further express \eqref{eqn:primal_first_order_MD2} in each $p$ as, 
\begin{equation}
	\label{eqn:primal_first_order_MD2_p}
	\dot \xi^p  \in \beta(x^p - \xi^p), \,\, 
	\dot x^p  \in \gamma \nabla^2 {\psi_\epsilon^p}^\star(z^p) (U^p(x) - \alpha (x^p - \xi^p)).
\end{equation} In the following, we offer two general ways of re-writing \eqref{eqn:primal_first_order_MD2_p} as a set of ODEs in the primal-space. 

\textbf{General Case}: Our next result shows that, in the general case, finding the primal-space dynamics associated with MD2 generally amounts to evaluating the operator  $\nabla^2 {\psi^p_\epsilon}^\star \circ \nabla \psi^p_\epsilon \coloneqq \nabla^2 {\psi^p_\epsilon}^\star(\nabla \psi^p_\epsilon)$. 

\begin{prop}
	\label{prop:primal_dynamics_general}
	Suppose \autoref{assump:induced_primal} holds, then \eqref{eqn:primal_first_order_MD2_p} reduces to, \begin{equation}
		\label{eqn:primal_first_order_nonLegendre}
		\begin{cases}
			\dot \xi^p & =\beta(x^p - \xi^p)\\
			\dot x^p  &= \gamma \nabla^2 {\psi^p_\epsilon}^\star(\nabla \psi^p_\epsilon(x^p)) (U^p(x) - \alpha (x^p -  \xi^p)). 
		\end{cases} 
	\end{equation}
\end{prop}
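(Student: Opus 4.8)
The plan is to observe that the only source of set-valuedness in \eqref{eqn:primal_first_order_MD2_p} is the appearance of the dual variable $z^p$ through the pre-image relation $z^p \in (C^p_\epsilon)^{-1}(x^p) = \partial \psi^p_\epsilon(x^p)$, which is a priori a set; the line $\dot\xi^p \in \beta(x^p-\xi^p)$ is already a genuine equation. Thus the entire content of the proposition is to establish the single identity $z^p = \nabla \psi^p_\epsilon(x^p)$, after which substituting into $\nabla^2 {\psi^p_\epsilon}^\star(z^p)$ immediately yields the right-hand side of \eqref{eqn:primal_first_order_nonLegendre}. I would state this reduction explicitly at the outset so that the rest of the argument is focused solely on pinning down $z^p$ as a function of $x^p$.

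The key step uses steepness, which \autoref{assump:induced_primal} supplies. First I would note that, as recorded in the main text, a steep $\vartheta^p$ induces a mirror map $C^p_\epsilon = \nabla {\psi^p_\epsilon}^\star$ whose range lies in $\rinterior(\Omega^p)$, so every trajectory point $x^p = C^p_\epsilon(z^p)$ is a relative-interior point of $\dom(\psi^p_\epsilon) = \Omega^p$. Second, the very definition of steepness used here presumes differentiability of $\vartheta^p$ (hence of $\psi^p_\epsilon = \epsilon\vartheta^p$) on $\rinterior(\Omega^p)$, so $\partial \psi^p_\epsilon(x^p)$ is the singleton $\{\nabla \psi^p_\epsilon(x^p)\}$ at each such $x^p$; equivalently, combining steepness with the strict convexity guaranteed by \autoref{assump:regularizer} makes $\psi^p_\epsilon$ Legendre, so that $\nabla\psi^p_\epsilon$ and $\nabla{\psi^p_\epsilon}^\star = C^p_\epsilon$ are mutually inverse bijections (cf.\ \autoref{lem:mirror_map_properties_legendre} and \cite{Bauschke97}). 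Applying the conjugate subgradient theorem to $x^p = \nabla {\psi^p_\epsilon}^\star(z^p)$ gives $z^p \in \partial \psi^p_\epsilon(x^p)$, and singleton-ness then forces $z^p = \nabla \psi^p_\epsilon(x^p)$ uniquely.

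Finally I would substitute $z^p = \nabla \psi^p_\epsilon(x^p)$ into the $\dot x^p$ inclusion, turning it into the equation $\dot x^p = \gamma \nabla^2 {\psi^p_\epsilon}^\star(\nabla \psi^p_\epsilon(x^p))\,(U^p(x) - \alpha (x^p - \xi^p))$, where $\nabla^2 {\psi^p_\epsilon}^\star$ exists and is continuous precisely because \autoref{assump:induced_primal} assumes the mirror map $C^p_\epsilon = \nabla{\psi^p_\epsilon}^\star$ to be $\mathcal{C}^1$; the $\dot\xi^p$ equation is carried over unchanged. The only delicate point is this collapse of the inclusion to an equation, i.e.\ the uniqueness of the pre-image $z^p$: this is exactly where steepness is indispensable, since a non-steep regularizer (as in the Euclidean-projection example) can send distinct dual points to the same boundary primal point, leaving $\partial\psi^p_\epsilon(x^p)$ a nontrivial normal-cone-valued set and invalidating the substitution. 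Everything else is a routine rewriting.
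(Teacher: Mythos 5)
There is a genuine gap: your key claim that steepness pins down the pre-image uniquely, i.e.\ that $z^p = \nabla\psi^p_\epsilon(x^p)$, is false whenever $\Omega^p$ is not full-dimensional, and this is exactly the regime of the proposition's flagship application. Take the entropic regularizer on the simplex (\autoref{exmp:softmax}): $\vartheta^p$ is steep and strictly convex, the induced mirror map is the softmax, yet $C^p_\epsilon(z^p + c\mathbf{1}) = C^p_\epsilon(z^p)$ for every $c \in \mathbb{R}$, so $C^p_\epsilon$ is not injective and the pre-image of any $x^p \in \rinterior(\Omega^p)$ is the whole line $\{\nabla\psi^p_\epsilon(x^p)\} + \operatorname{span}(\mathbf{1})$. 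The underlying point is that at a relative-interior point of a lower-dimensional set the normal cone is not trivial --- for the simplex, $N_{\Omega^p}(x^p) = \operatorname{span}(\mathbf{1})$ --- so $\partial\psi^p_\epsilon(x^p) = \{\nabla\psi^p_\epsilon(x^p)\} + N_{\Omega^p}(x^p)$ is a nontrivial set even where steepness confines $x^p$ to $\rinterior(\Omega^p)$. Relatedly, your appeal to the Legendre machinery (\autoref{lem:mirror_map_properties_legendre}) is unavailable here: Legendre-ness in \autoref{assump:regularizer}(i) requires $\interior(\dom(\vartheta^p)) \neq \emptyset$, which the simplex violates, and \autoref{assump:induced_primal} deliberately assumes only steepness plus a $\mathcal{C}^1$ mirror map. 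Along an actual MD2 trajectory there is no reason the dual state $z^p(t)$ equals $\nabla\psi^p_\epsilon(x^p(t))$ rather than some other element of the subdifferential, so your substitution step is unjustified precisely in the case that yields the second-order replicator dynamics \eqref{eqn:RD2}.

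The paper's proof handles this by \emph{not} claiming uniqueness: it writes $z^p = \nabla\psi^p_\epsilon(x^p) + n^p(x^p)$ with $n^p(x^p) \in N_{\Omega^p}(x^p)$, and then shows the Hessian of the conjugate is invariant to the normal-cone offset, $\nabla^2{\psi^p_\epsilon}^\star(\nabla\psi^p_\epsilon(x^p) + n^p(x^p)) = \nabla^2{\psi^p_\epsilon}^\star(\nabla\psi^p_\epsilon(x^p))$. The mechanism is that both dual points are subgradients of $\psi^p_\epsilon$ at the same $x^p$ (the normal-cone term drops out of the subgradient inequality since $n^p(x^p)^\top(y^p - x^p) \leq 0$), so the single-valued map $\nabla{\psi^p_\epsilon}^\star$ sends both back to $x^p$; this gives the identity $\nabla{\psi^p_\epsilon}^\star(\nabla\psi^p_\epsilon(x^p) + n^p(x^p)) = \nabla{\psi^p_\epsilon}^\star(\nabla\psi^p_\epsilon(x^p))$ on $\rinterior(\Omega^p)$, and differentiating it transfers the equality to the Jacobians appearing in \eqref{eqn:primal_first_order_MD2_p}. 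Your argument is salvageable only under the extra hypothesis $\interior(\Omega^p) \neq \emptyset$ (e.g.\ the unconstrained and orthant examples), where the normal cone at interior points is $\{\mathbf{0}\}$ and the subdifferential really is a singleton; as written, it proves a strictly weaker statement than the proposition, and the invariance-of-the-Hessian step is the missing idea.
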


\begin{example}(\textbf{Simplex})
	Let $\Omega^p = \{x^p \in  \mathbb{R}^{n_p}_{\geq 0}|\|x^p\|_1 = 1\}$ and consider $\vartheta^p(x^p) = \sum_{i = 1}^{n_p} x_i^p \log(x_i^p)$, $\psi_\epsilon^p(x^p) =\epsilon \vartheta^p(x^p)$ and ${\psi^p_\epsilon}^\star(z^p) = \epsilon\log(\sum_{i = 1}^{n_p} \exp(\epsilon^{-1} z^p_i))$. It can be shown that,
	$\nabla^2 {\psi_\epsilon^p}^\star(z^p) = \epsilon^{-1}(\diag(C^p_\epsilon(z^p)) - C^p_\epsilon(z^p)C^p_\epsilon(z^p)^\top)$ where $\textstyle C^p_\epsilon(z^p) = (\exp(\epsilon^{-1} z^p_i) (\sum_{j = 1}^{n_p} \exp(\epsilon^{-1} z^p_j))^{-1})_{i\in[n_p]}$ is the \textit{softmax} function (\autoref{exmp:softmax}). Let $x^p \in \rinterior(\Omega^p)$. We can show $\nabla \psi_\epsilon^p(x^p) = \epsilon(\log(x^p) + \mathbf{1})$ ($\log$ is applied component-wise)  and $ C^p_\epsilon(\nabla \psi^p_\epsilon(x^p)) = x^p$. Taken together, we have, 
	$\nabla^2 {\psi^p_\epsilon}^\star \circ \nabla \psi^p_\epsilon = \nabla^2 {\psi^p_\epsilon}^\star(\nabla \psi^p_\epsilon(x^p))= \epsilon^{-1}(\diag(x^p) - x^p {x^p}^\top)$. By applying  \eqref{eqn:primal_first_order_nonLegendre} and defining $\mathpzc{H}(x^p) \coloneqq \diag(x^p)\! - \!x^p {x^p}^\top$, we obtain \eqref{eqn:RD2} as the induced primal-space dynamics of MD2 on the simplex, 
	\begin{equation}
	\label{eqn:RD2}
	\dot \xi^p \!=\!\beta(x^p \!-\! \xi^p), \quad 	\dot x^p  \!=\! \gamma \epsilon^{-1} \mathpzc{H}(x^p) (U^p(x)\! -\! \alpha (x^p \!-\! \xi^p)),
	\end{equation} 
	We note that \eqref{eqn:RD2} represents a version of second-order \textit{replicator dynamics} (RD), which is different from the ones introduced in \cite{Laraki13, Shamma_anticipatory}. In particular, when $\alpha = \beta = 0$, we recover the (multi-population) replicator dynamics \cite{MigotCojocaru2020,  Hofbauer_Stable, Sandholm, Mertikopoulos16, Mertikopoulos18, Sorin16}. 
	
	We can also reduce \eqref{eqn:RD2} to a \textit{single population} model ($p = 1$), which takes the following form:
	\begin{equation}
		\label{eqn:RD2_single_pop}
		\textstyle 
		\begin{cases}
		\dot \xi_i  & =\beta(x_i -  \xi_i),\\
		\dot x_i & = x_i(U_i(x) - \dfrac{\alpha}{\beta} \dot \xi_i - \sum_{j \in \mathcal{A}} x_j(U_j(x) - \dfrac{\alpha}{\beta}\dot \xi_j)),
		\end{cases} 
	\end{equation} 
	where $\mathcal{A} = \{1, \ldots, n\}$ represents $n$ subpopulations of non-atomic players, and $U_i$ represents the fitness of subpopulation $i$. We note that structurally \eqref{eqn:RD2_single_pop} shares similarity with the anticipatory replicator dynamics studied in \cite{Shamma_anticipatory}, which can be written as,
	\begin{equation}
		\textstyle 
		\begin{cases}
			\dot \xi_i  & =\beta(x_i -  \xi_i),\\
			\dot x_i & = x_i(U_i(x + \alpha \dot \xi) - \sum_{j \in \mathcal{A}} x_j(U_j(x + \alpha \dot \xi))).
		\end{cases} 
	\end{equation} 
	
	 Note that the variable $\xi_j$ in \eqref{eqn:RD2_single_pop} does not directly interacts with the pseudo-gradient/payoff vector $U(x)$, as is the case for the anticipatory RD.

\end{example}

\textbf{Legendre and Supercoercive}: A more structured scenario is when $\psi^p_\epsilon \coloneqq \epsilon \vartheta^p$ is Legendre and supercoercive for all $p$ (\autoref{assump:regularizer}(i)). Since $\psi^p_\epsilon$ is finite on $\interior(\Omega^p)$, coercive, strictly convex and $\mathcal{C}^2$, then applying \cite[Example 11.9, p. 480]{Wets09}, ${\psi^p_\epsilon}^\star$ shares the same properties and we have that $[\nabla^2 \psi^p_\epsilon (x^p)]^{-1} = \nabla^2 {\psi^p_\epsilon}^\star(z^p) =  ({\nabla^2 \psi_\epsilon^p}^\star \circ \nabla \psi_\epsilon^p)(x^p), \forall x^p = \nabla {\psi^p_\epsilon}^\star(z^p) \in \interior(\Omega^p)$,  which allows us to write \eqref{eqn:primal_first_order_MD2_p} or \eqref{eqn:primal_first_order_nonLegendre} as, 
\begin{equation}
	\label{eqn:primal_first_order_Legendre}
	\begin{cases}
		\dot \xi^p & =\beta(x^p - \xi^p),\\
		\dot x^p  &= \gamma [\nabla^2 \psi^p_\epsilon (x^p)]^{-1}(U^p(x) - \alpha (x^p- \xi^p)),
	\end{cases} 
	\tag*{(NG2)}
\end{equation}
which we refer to as the \textit{second-order natural gradient descent} (NG2). NG2 is so named because in the optimization setup ($p = 1$), for $U = -\nabla f$, where $f: \mathbb{R}^n \to \mathbb{R} \cup\{\infty\}$ is some objective function and $\alpha, \beta = 0$,  is analogous to the so-called \textit{natural gradient descent} \cite{Amari16}, $\dot x = -[\nabla^2 \psi_{\epsilon}(x)]^{-1}\nabla f(x).$	

\begin{example}(\textbf{Unconstrained}) Let $\Omega^p = \mathbb{R}^{n_p}$, $\vartheta^p(x^p) = 		\frac{1}{2}\|x^p\|_2^2$, then  $\nabla^2 {\psi^p_\epsilon}(x^p) = \epsilon \mathpzc{I}$ where ${\psi^p_\epsilon} = \epsilon \vartheta^p$. By comparing with NG2, we obtain,
\begin{equation}
	\dot \xi^p  =\beta(x^p - \xi^p), \quad \dot x^p  = \gamma \epsilon^{-1} (U^p(x) - \alpha (x^p- \xi^p)),
	\label{eqn:MD2_induced_primal_unconstrained}
\end{equation}
which recovers the one recently introduced in \cite{Dian_LP_CDC2020}. \eqref{eqn:MD2_induced_primal_unconstrained} can be further shown via discretization to recover algorithms such as optimistic gradient-descent/ascent, Polyak's heavy-ball method, among others (see \cite{Dian_LP_CDC2020}). While \eqref{eqn:MD2_induced_primal_unconstrained} is by far the most standard choice for unconstrained action sets, more than one type of dynamics can reside on the same action sets. To witness, let $\Omega^p = \mathbb{R}^{n_p}$,  $\vartheta^p(x^p) = \sum_{i = 1}^{n_p} x_i^p \arcsinh(x^p_i/\wp) - \sqrt{x_i^{p2} + \wp^2}, \wp > 0$, which represents an interpolation between entropic and Euclidean terms \cite{Ghai}. $\vartheta^p$ is Legendre and supercoercive (its convex conjugate is $\sum_{i = 1}^{n_p} \wp\cosh(x^p_i)$, which is also Legendre). Let $\psi^p_\epsilon = \epsilon \vartheta^p$, then $\nabla \psi^p_\epsilon = \epsilon \sum_{i = 1}^{n_p} \arcsinh(x_i^p/\wp)$ and  $\nabla^2 \psi^p_\epsilon  =  \epsilon \left[\diag(\begin{bmatrix} \sqrt{{x^p_1}^2 + \wp^2}, \ldots, \sqrt{x^{p2}_{n^p} + \wp^2}) \end{bmatrix}^\top)\right]^{-1}.$ By comparing with NG2, we obtain, 
	\begin{equation}
	\dot \xi^p  =\beta(x^p - \xi^p), \quad \dot x^p  = \gamma \epsilon^{-1} \mathpzc{S}(x^p) (U^p(x) - \alpha (x^p- \xi^p)). 
	\label{eqn:MD2_induced_primal_unconstrained_hyentropy}
	\end{equation}	
where $\mathpzc{S}(x^p) \coloneqq  \diag(\begin{bmatrix} \sqrt{{x^p_1}^2 + \wp^2}, \ldots, \sqrt{x^{p2}_{n^p} + \wp^2} \end{bmatrix}^\top)$, 
which translates into a version of \eqref{eqn:MD2_induced_primal_unconstrained} with a strategy-dependent coefficient.
\end{example}

\begin{example}(\textbf{Orthant})
	Let $\Omega^p = \mathbb{R}^{n_p}_{\geq 0}$ and consider $\vartheta^p(x^p) = \sum_{i = 1}^{n_p} x_i^p \log(x_i^p) - x_i^p$ with $0\log(0) = 0$. We can show $\nabla^2 \psi_\epsilon^p(x^p) = \epsilon [\diag({x^p})]^{-1}$ where $\psi_\epsilon^p = \epsilon \vartheta^p$. Using NG2, we immediately arrive at,	
	\begin{equation}
			\label{eqn:MD2_induced_nonneg_orthant}
			\dot \xi^p  =\beta(x^p - \xi^p) \quad 
			\dot x^p  =  \gamma\epsilon^{-1} \diag(x^p) (U^p(x) - \alpha (x^p - \xi^p)).
	\end{equation}  We note that the first-order variant $(\alpha = \beta = 0)$ of \eqref{eqn:MD2_induced_nonneg_orthant} was recently studied by \cite{Bervoets} as the  \textit{mean-dynamics} to a payoff-based learning dynamics in continuous games with non-negative orthant action sets. Suppose instead $\vartheta^p(x^p) = -\sum_{i = 1}^n \ln(x^p_i)$ (the log-barrier function), which leads to $\nabla^2 \psi_\epsilon^p(x^p) = \epsilon [\diag(x^{p2})]^{-1}$ and,
	\begin{equation}
		\label{eqn:log_barrier_dynamics}
		\!\dot \xi^p  =\beta(x^p\! -\! \xi^p) \quad 
		\!\dot x^p  =  \gamma\epsilon^{-1} \diag(x^{p2}) (U^p(x) \!-\! \alpha (x^p \!-\! \xi^p)).
	\end{equation} 
	We stress however that the log-barrier does not fall under our definition of a regularizer, as $\dom(\vartheta^p)$ is a proper subset of $\Omega^p$. \eqref{eqn:log_barrier_dynamics} with $\alpha = \beta = 0$ has been previously studied in the context of optimization, see \cite{Fiacco}.
\end{example}

\section{Discrete-Time Second-Order Dual Averaging with Noisy Observations}
\label{section:second_order_MD_noisy} 
So far we have considered a continuous-time setup whereby each player is able to acquire a partial-gradient $U^p$ at each time instance. In practical scenarios where the games are played in discrete-time, the acquired pseudo-gradient information could be corrupted due to a multitude of reasons, such as a noisy communication channel. This leads us to consider the so-called ``noise-corrupted" pseudo-gradient scenario (also known as ``semi-bandit" learning). In this setting, each player receives a realization of a so-called noise-corrupted version of the true pseudo-gradient, \[\widehat U^p_{k+1} \coloneqq U^p(x_k) + \zeta^p_{k+1},\] where $\zeta^p_k$ is some noise process. A special case of the semi-bandit scenario is when the payoff is obtained as the expectation of the true payoff, i.e.,  $\mathcal{U}^p(x_k^p; x^{-p}_k) = \mathbb{E}[\mathcal{U}^p(x^p_k; x^{-p}_k, v^p_k)]$, $v^p_k$ some random vector, where  $\mathbb{E}$ denotes the expectation operator. Here $\widehat U^p_k$ is an estimate of the expected partial-gradient $\nabla_{x^p} \mathbb{E}[\mathcal{U}^p(x^p_k; x^{-p}_k, v^p_k)]$. 

Let's consider the convergence of the discrete-time MD2 with noisy observations which we refer to as second-order \textit{dual averaging} or DA2, 
\begin{equation}
	\begin{cases}
		X^p_k &=  C^p_\epsilon(Z^p_k)\\
		Z^p_{k+1} & = Z^p_k + \gamma t_{k+1} (\widehat U^p_{k+1} - \alpha (X^p_k - \Xi^p_k)) \\
		\Xi^p_{k+1} & = \Xi^p_k+ \tau_{k+1} \beta (X^p_k - \Xi^p_k),
	\end{cases}
	\tag*{(DA2)}
	\label{eqn:MD2_D}
\end{equation}
where $X^p_k, Z^p_k, \Xi^p_{k}$ are the stochastic counter-part of (resp.) $x^p, z^p, \xi^p$ at time  $k$. $\{t_k\}_{k\in \mathbb{N}}$, $\{\tau_k\}_{k\in \mathbb{N}}$ denote deterministic sequences of  non-increasing step-sizes, assumed to be common for all players. $\alpha, \beta, \gamma > 0 $ are the auxiliary parameters as from before. When $\alpha = \beta = 0$, we refer to the resulting expression as the dual averaging with noisy observations, 
\begin{equation}
	\begin{cases}
		X^p_k &=  C^p_\epsilon(Z^p_k)\\
		Z^p_{k+1} & = Z^p_k + \gamma t_{k+1} \widehat U^p_{k+1}, \\
	\end{cases}
	\tag*{(DA)}
	\label{eqn:noisy_md}
\end{equation}
which coincides with the dual averaging scheme studied in \cite{MZ2019} for $\gamma = 1$. 

To analyze the convergence behavior of DA2, we impose the following set of regularity assumptions \cite{Benaim_1999, Benaim_Hofbauer_Sorin05}. 
\begin{assumption}(\textbf{$\ell^2\backslash\ell^1$-Summability and Diminishing Step-sizes})
	\label{assump:MD2_l2_summability}
	\begin{equation} 
		\begin{split}
			&	\textstyle \sum_{k \in \mathbb{N}} t_{k} = \infty  \quad \textstyle \sum_{k \in \mathbb{N}} t^2_{k} < \infty \quad \textstyle \lim_{k \to \infty} t_{k} = 0, \\
			& 	\textstyle \sum_{k \in \mathbb{N}} \tau_{k} = \infty \quad 	\sum_{k \in \mathbb{N}} \tau^2_{k} < \infty  \quad \lim_{k \to \infty} \tau_{k} = 0.
		\end{split}  
	\end{equation} 
\end{assumption}
\begin{assumption}(\textbf{$L^2$-Bounded Martingale Difference Noise})
	\label{assump:MD2_noise_assump}
	Assume $\{\zeta_k\}_{k \in \mathbb{N}}, \zeta_k = (\zeta^p_k)_{p \in \mathcal{N}}$ is a $L^2$-bounded marginal difference process adapted to the filtration $\{\mathcal{F}_k\}_{k \in \mathbb{N}}$: each $\zeta_k$ is a random vector that is measurable with respect to $\mathcal{F}_k$ for each $k$, where each $\mathcal{F}_k$ is the $\sigma$-field, i.e., $\mathcal{F}_k = \sigma(\Xi_0, Z_0, \zeta_0, \ldots, \zeta_k)$ and $\mathcal{F}_k \subseteq \mathcal{F}_{k+1}$. In particular, $\{\zeta_k\}_{k \in \mathbb{N}}$ satisfies,
	\begin{equation} 
	\E[\zeta_{k+1} | \mathcal{F}_k] = 0, \forall k \in \mathbb{N} \quad a.s.,  
\end{equation} 
and for some $\sigma \geq 0$, 
\begin{equation} 
	\E[\|\zeta_{k+1}\|_\star^2 | \mathcal{F}_k] \leq \sigma^2, \forall k \in \mathbb{N} \quad a.s. 
\end{equation} 
\end{assumption}


\begin{assumption}(\textbf{Bounded Iterates})
	\label{assump:MD2_bounded_iterates}
	\begin{equation} \textstyle \sup_k \|Z^p_k\| < \infty \quad \sup_k \|\Xi^p_k\| < \infty, \forall p. \end{equation} 
\end{assumption}

Finally, we impose ``global integrability" on MD2, i.e., MD2 has a complete vector field. To do so, we need to convert MD2 into a first-order system by defining: $\omega^p \coloneqq ( \xi^p,  z^p )$, which generates the following stacked system on $\mathbb{R}^{2n} \cong \mathbb{R}^n \times \mathbb{R}^n$, 
\begin{equation} 
\begin{split} 
	\dot \omega^p 
	& = \begin{bmatrix} \dot \xi^p \\ \dot z^p \end{bmatrix} = \begin{bmatrix} \beta(C_\epsilon^p(z^p) - \xi^p) \\ \gamma(U^p(C_\epsilon(z)) - \alpha(C_\epsilon^p(z^p) - \xi^p)) \end{bmatrix}, \\
	& =  \begin{bmatrix} \beta C_\epsilon^p(z^p) \\ \gamma(U^p(C_\epsilon(z)) - \alpha C_\epsilon^p(z^p)) \end{bmatrix} + \begin{bmatrix} -\beta \xi^p \\ \gamma \alpha \xi^p \end{bmatrix},
\end{split} 	
\end{equation} 
or equivalently, $
	\dot \omega^p =  F^p(\omega) = g^p(z) + A^p(\xi^p), $
where, $ A^p(\xi^p)  = \begin{bmatrix} -\beta \xi^p \\ \gamma \alpha \xi^p \end{bmatrix}$ and $g^p(z) = \begin{bmatrix}  \beta C_\epsilon^p(z^p) \\  \gamma(U^p(C_\epsilon(z)) - \alpha C_\epsilon^p(z^p))  \end{bmatrix}.$

We then proceed to impose the following assumption on the overall system,
\begin{equation} \label{eqn:overall_system_MD2} \dot \omega  = F(\omega), \omega = (\omega^p)_{p \in \mathcal{N}},  F = (F^p)_{p \in \mathcal{N}}. \end{equation}  

\begin{assumption}(\textbf{Global Integrability})
	\label{assump:global_integrability_MD2} 
	The vector field $F: \mathbb{R}^{2n} \to \mathbb{R}^{2n}$ of \eqref{eqn:overall_system_MD2} is continuous globally integrable, that is, for every initial condition $(\xi(0), z(0)) \in \mathbb{R}^{2n}$, the unique solution of \eqref{eqn:overall_system_MD2} is defined for all $t \in \mathbb{R}$.
\end{assumption}
\begin{remark}
	We note that global integrability of MD2 is satisfied whenever the following holds, 
\begin{itemize}
	\item[(i)] 	Suppose $C^p_\epsilon$ is bounded on $\mathbb{R}^{n_p}$ and $U^p$ is bounded continuous locally Lipschitz on $\range(C^p_\epsilon)$ for all $p \in \mathcal{N}$, then $F$ \eqref{eqn:overall_system_MD2} is bounded locally Lipschitz and hence continuous globally integrable. This follows from \cite{Benaim_1999}.  
	\item[(ii)] 	Suppose $U^p$ is continuous locally Lipschitz and both $C^p_\epsilon$ and $U^p\circ C_\epsilon$ are \textit{sublinear} for all $p$, that is, 	\begin{equation} 
		\limsup_{\|z\|_2\to \infty}\dfrac{\|C^p_\epsilon(z)\|_2}{\|z\|_2} < \infty,\quad 
		\limsup_{\|z\|_2\to \infty}\dfrac{\|U^p \circ C_\epsilon(z)\|_2}{\|z\|_2} < \infty,
	\end{equation} then $F$ is sublinear and hence continuous and globally integrable. This follows from \cite{Benaim_Faure12}. 
\end{itemize}
\end{remark} 

Under \autoref{assump:MD2_l2_summability} -- 8, DA2 can be shown to track the continuous trajectories generated by MD2 via stochastic approximation arguments \cite{Benaim_1999,Benaim_Hofbauer_Sorin05}.

\begin{thm}
	
	\label{thm:MD2_discrete_convergence}
	Let $\mathcal{G}$ be a concave game and assume that every interior NE is globally merely VS. Suppose that all players choose strategies according to DA2 and $C_\epsilon = (C_\epsilon^p)_{p \in \mathcal{N}}$ is induced by $\psi^p_\epsilon \coloneqq \epsilon \vartheta^p$, where $\vartheta^p$ is $\mathcal{C}^2$ and satisfies \autoref{assump:regularizer}(i). Suppose that the following assumptions hold, \begin{itemize} 
		\item \autoref{assump:MD2_l2_summability} ($\ell^2\backslash\ell^1$ Summability and diminishing step-sizes) 
		\item  \autoref{assump:MD2_noise_assump} ($L^2$-bounded Martingale difference noise),
		\item  \autoref{assump:MD2_bounded_iterates} (Bounded iterates) 
		\item  \autoref{assump:global_integrability_MD2} (Global integrability)
	\end{itemize}  then $X_k$ converges to an interior mere VSS of $\mathcal{G}$ almost surely. The same conclusions hold whenever $\vartheta^p$ satisfies \autoref{assump:regularizer}(ii) and $\Omega^p$ is compact $\forall p$.
\end{thm}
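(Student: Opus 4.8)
The plan is to view DA2 as a Robbins--Monro stochastic approximation of the complete first-order vector field $F$ in \eqref{eqn:overall_system_MD2}, and then to import the continuous-time convergence of \autoref{thm:main} to the discrete iterates through the theory of asymptotic pseudotrajectories (APT) \cite{Benaim_1999, Benaim_Hofbauer_Sorin05}. First I would stack the $Z$- and $\Xi$-updates of DA2 into a single recursion on $\omega_k = (\Xi_k, Z_k)$,
\[
\omega_{k+1} = \omega_k + \lambda_{k+1}\big(F(\omega_k) + M_{k+1}\big),
\]
driven by the common step-size $\lambda_k = t_k$, where the ratio $\tau_k/t_k$ (assumed to tend to a positive constant, keeping the scheme single-timescale) is absorbed into a rescaling of $\beta$ and $M_{k+1}$ collects the martingale-difference terms built from $\zeta_{k+1}$. \autoref{assump:MD2_noise_assump} makes $\{M_k\}$ an $L^2$-bounded martingale difference and \autoref{assump:MD2_l2_summability} supplies the $\ell^2\backslash\ell^1$ step-size conditions, so the aggregate noise is almost surely controlled in the sense required by the APT framework.

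Next I would certify the APT property. \autoref{assump:global_integrability_MD2} ensures $F$ induces a complete semiflow $\Phi$ on $\mathbb{R}^{2n}$, \autoref{assump:MD2_bounded_iterates} gives precompactness of $\{\omega_k\}$, and combined with the step-size and noise control these are exactly the hypotheses under which the linear interpolation of $\{\omega_k\}$ is almost surely an asymptotic pseudotrajectory of $\Phi$ \cite{Benaim_1999}. It follows that the limit set $L$ of $\{\omega_k\}$ is, almost surely, a nonempty, compact, connected, internally chain transitive (ICT) set of $\Phi$.

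The decisive ingredient is the Lyapunov function underlying \autoref{thm:main}. Fixing an interior mere VSS $x^\star$ and writing $F_{\psi^p_\epsilon}(x^{\star p}, z^p)$ for the Fenchel coupling (equivalently $D_h(x^\star, x)$ in the primal, as in \eqref{eqn:MD2_rate}), set
\[
V(z,\xi) = \tfrac{1}{\gamma}\sum_{p\in\mathcal{N}} F_{\psi^p_\epsilon}(x^{\star p}, z^p) + \tfrac{\alpha}{2\beta}\,\|\xi - x^\star\|_2^2 .
\]
A direct differentiation along $\Phi$ gives the identity $\dot V = U(x)^\top(x-x^\star) - \alpha\|x-\xi\|_2^2$, which is $\le 0$ because the mere-VSS property forces $U(x)^\top(x-x^\star)\le 0$. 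Hence $V$ is nonincreasing along $\Phi$, and since a continuous function that is nonincreasing along the flow is constant on every ICT set \cite{Benaim_1999, Benaim_Hofbauer_Sorin05}, $V$ is constant on $L$; in particular $\dot V \equiv 0$ on $L$.

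The conclusion, and the step I expect to be the main obstacle, is the passage from ``$\dot V\equiv 0$ on the invariant set $L$'' to ``$L$ is a single interior mere VSS''. Here the second-order augmentation is essential: the term $-\alpha\|x-\xi\|_2^2$ in $\dot V$ forces $x\equiv\xi$ throughout $L$, whence $\dot\xi = \beta(x-\xi)\equiv\mathbf{0}$; invariance of $L$ then makes $\xi$, and therefore $x = C_\epsilon(z)$, constant along every trajectory in $L$, so that $U(x)=\mathbf{0}$ and every point of $L$ is a rest point of \eqref{eqn:MD2_rest_condition}. An ICT set consisting entirely of fixed points is a singleton, so $\omega_k$ converges almost surely to a single rest point and $X_k = C_\epsilon(Z_k)\to x^\star$. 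The subtlety, and the reason first-order MD does not suffice here, is precisely that in the non-strict regime $U(x)^\top(x-x^\star)$ may vanish off the equilibrium set; it is the $-\alpha\|x-\xi\|_2^2$ term together with the invariance of ICT sets that rules out such spurious limits. The case of \autoref{assump:regularizer}(ii) with compact $\Omega^p$ is handled identically, replacing the Legendre mirror-map properties by their strongly-convex counterparts.
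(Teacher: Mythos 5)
Your overall route is the same as the paper's: identify MD2 as the mean ODE of DA2, invoke the asymptotic-pseudotrajectory machinery of \cite{Benaim_1999, Benaim_Hofbauer_Sorin05} under \autoref{assump:MD2_l2_summability}--\autoref{assump:global_integrability_MD2}, and then run the Lyapunov function of \autoref{thm:main} on the limit set. Two steps, however, do not survive scrutiny. The first is your single-timescale reduction: you merge the $Z$- and $\Xi$-updates into one recursion with common step $t_k$ by assuming $\tau_k/t_k$ tends to a positive constant and rescaling $\beta$. Nothing in \autoref{assump:MD2_l2_summability} ties the two sequences together --- the paper's own simulations use, e.g., $t_k = 0.23/k^{0.48}$ and $\tau_k = 0.34/k^{0.88}$, for which $\tau_k/t_k \to 0$ --- so your argument proves a strictly narrower statement than the theorem unless you add the ratio condition to its hypotheses. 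The paper instead interpolates $Z_k$ on the timescale $\ell_k = \sum_{i \leq k} t_i$ and $\Xi_k$ on $\ell_k^\prime = \sum_{i \leq k} \tau_i$ and applies Propositions 4.1--4.2 of \cite{Benaim_1999} to the stacked process $\underline{W} = (\underline{\Xi}, \underline{Z})$.

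The second, and decisive, gap is the assertion that an internally chain transitive set consisting entirely of fixed points is a singleton. This is false: any compact \emph{connected} set of rest points is ICT, since a pseudo-orbit may sit at an equilibrium for time $T$ and then make an $\varepsilon$-jump to a nearby equilibrium, so chains traverse the whole continuum. Mere VSS is precisely the regime where this matters, because such equilibria need not be isolated (in \autoref{example:rps_game} with $\mathpzc{l} = \mathpzc{w} = \varsigma = 0$, every point of $\Omega$ is a globally mere VSS, and merely monotone games generally admit a convex set of NE). The same degeneracy undercuts your intermediate claim that a function nonincreasing along the flow is constant on every ICT set: for the trivial flow on a segment of equilibria, $V(x) = x$ is constant along each (trivial) trajectory yet nonconstant on the ICT segment. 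This is why the paper routes the argument through \cite[Prop.~3.27]{Benaim_Hofbauer_Sorin05}, which requires $V$ to decrease strictly off the critical set $\mathcal{E}$ and $V(\mathcal{E})$ to have empty interior, and concludes only that $L(\underline{W}) \subseteq \mathcal{E}$, i.e., convergence of $X_k$ to the set of rest points corresponding to interior mere VSS --- not single-point convergence via chain transitivity. Your LaSalle-type extraction on $L$ ($\dot V \equiv 0$ forces $x \equiv \xi$, and invariance plus compactness then force $U(x) = \mathbf{0}$) is sound and matches the paper's mechanism; it is only the final singleton inference that overreaches, and it should be replaced by set convergence, or salvaged by an additional hypothesis (e.g., isolated equilibria) making $\mathcal{E}$ totally disconnected.
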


\begin{remark}
	The closest result to ours is \cite[Theorem 4.7]{MZ2019}, but for DA (i.e., $\alpha,\beta = 0$). \autoref{assump:MD2_l2_summability}, \autoref{assump:MD2_noise_assump} are identical to theirs and  \autoref{assump:global_integrability_MD2} encapsulates the Lipschitz continuous requirement for $U$. The major departure is \autoref{assump:MD2_bounded_iterates}, which does not hold under certain circumstances, such as convergence towards (boundary) strict VSS in finite games. Hence our result only deals with interior mere VSS in general. To account for these boundary VSS possibly involves an extension of the inductive shadowing argument used by \cite{MZ2019} which we leave for future work.  
\end{remark}

\begin{remark}
	In addition to DA \cite{MZ2019}, there are several other algorithms that converge in similar settings. The most notable example is the \textit{mirror-prox/optimistic mirror descent} algorithm, which converges to mere VSS in perfect-gradient feedback setting \cite{Optimistic_MD}. However, the authors of \cite{Optimistic_MD} noted that convergence in null-coherent saddle point problems (a game with a mere VSS) fails in the presence of noise. This is a key advantage of DA2 over mirror-prox. Another algorithm is the \textit{stochastic iterative Tikhonov method} of \cite{Shanbhag13}, which converges in the semi-bandit setting. However, \cite{Shanbhag13} requires the game to be strictly monotone, whereas DA2 does not require monotonicity. 
\end{remark}
\section{Simulations}
\label{section:simulations} 

In this section, we consider three illustrative examples. The first example is the RPS game with a non-negative payoff for ties as in \autoref{example:rps_game}. We provide convergence behavior of MD and MD2 as well as MDA towards mere and weak VSS. We then provide the convergence behavior of DA2. The second example concerns a wireless power control game previously studied in \cite{Tatarenko19}. We show that DA2 converges in this game under different noise assumptions and study the effect of the number of players. Finally, we provide an example involving a generative adversarial network (GAN), which admits a locally mere VSS. We show that DA2 also converges in this game under different noise assumptions. In lieu of exact estimation of the basin of attraction for \textit{locally mere} VSS, which is difficult, we deal with all such cases through appropriate initialization. 


\begin{example}(\textbf{RPS with non-negative payoff for ties})
	\label{example:RPS_simulation} 
	Consider the RPS game as discussed in \autoref{example:rps_game}. We set the game parameters to be $\mathpzc{l} = 0, \mathpzc{w} = 2$ and vary the tie payoff parameter $\varsigma \in [1, \mathpzc{w}) = [1, 2)$ to induce different VS properties on the NE $x^\star = (x^p)_{p \in \mathcal{N}}, x^p =  (1/3, 1/3, 1/3)$. We simulate both MD and MD2 at initial conditions $z(0) = \begin{bmatrix} 3 &  2 &  1 \end{bmatrix}^\top, \xi(0) = \mathbf{0}$.  $\gamma, \beta, \alpha$ are kept as $1$.
	
	We begin by contrasting the continuous-time MD and MD2. For $\varsigma = 1$, this game is merely monotone, $x^\star$ is globally merely VS, MD2 converges by \autoref{thm:main} while MD diverges (\autoref{fig:rps_varsigma_1}). For $\varsigma = 1.1$, the game is $0.1$-weakly monotone, but since $x^\star$ is only $0.1$-weak VSS therefore it can be considered a nearly mere VSS (see \autoref{remark:weak_vss}). In this case, MD2 still converges while MD approaches a heteroclinic orbit (\autoref{fig:rps_varsigma_1_1}). 
	
	\begin{figure}[htp!]
		\centering
		\includegraphics[draft = false, scale = 0.5]{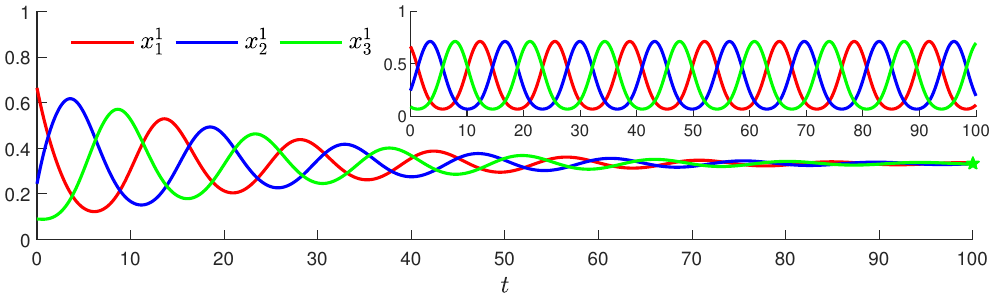}
		\caption{$\varsigma  = 1$, $\mathcal{G}$ merely monotone, $x^\star$ is merely VS. MD2 converges (Embedded: MD, cycling).}
				\label{fig:rps_varsigma_1}
	\end{figure}
	
	\begin{figure}[htp!]
		\centering
		\includegraphics[draft = false   , scale = 0.5]{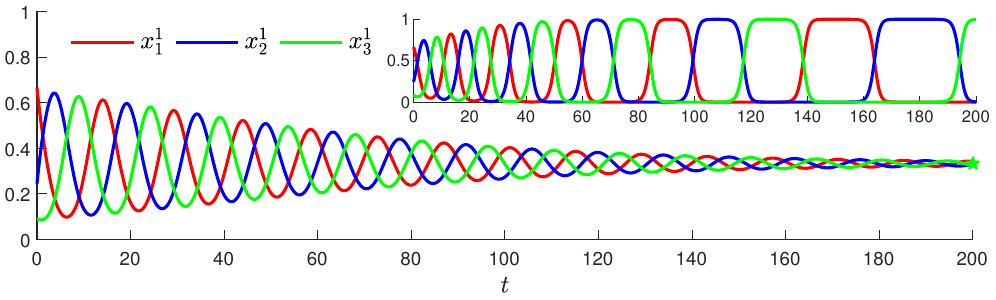}
		\caption{$\varsigma = 1.1$, $\mathcal{G}$ $0.1$-weakly monotone, $x^\star$ is $0.1$-weakly VS. MD2 converges (Embedded: MD, cycling).}
				\label{fig:rps_varsigma_1_1}
	\end{figure}

	It is useful to also examine the advantage of MD2 over time-averaged MD (MDA). While MDA does converge for $\varsigma = 1$ towards the mere VSS, when the equilibrium becomes just slightly weak ($\varsigma  = 1.1$), it no longer converges and instead approaches a Shapley triangle \cite{Unstable_Equilibria} (\autoref{fig:rps_varsigma_1_1_time_average}). This shows time-averaging is in general not a panacea to non-convergence.

	\begin{figure}[htp!]
		\centering
		\includegraphics[draft = false   , scale = 0.5]{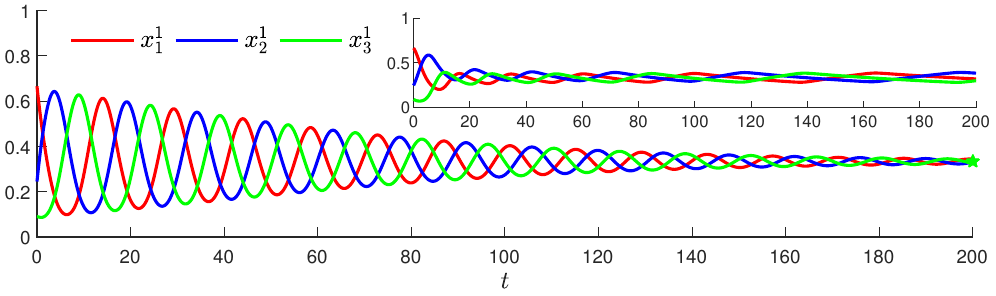}
		\caption{$\varsigma = 1.1$, $\mathcal{G}$ $0.1$-weakly monotone, $x^\star$ is $0.1$-weakly VS. MD2 converges (Embedded: MDA, cycling).}
		\label{fig:rps_varsigma_1_1_time_average}
	\end{figure}	
	
	Next, we perform a set of experiments for the globally merely VS case ($\varsigma = 1$) using DA2, where we assume the same initial condition as before. For each of the following simulations, we separately perturb the pseudo-gradient $U^p$ \eqref{example:rps_pseudo_grad_jacobian} with zero-mean Gaussian noise with the same variance $\sigma^2_\zeta > 0$ across all players. Such a perturbed gradient setting could describe a game involving multiple users interacting over a fully connected network with noisy communication channels. \autoref{fig:rps_variance_1} shows that DA2 easily converge in the low variance regimes $\sigma^2_\zeta = 1$. As we employ larger variance, e.g., $\sigma^2_\zeta = 10$, the standard step-sizes no longer leads to convergence: a larger $t_k$ will amplify the additive noise. Instead, we utilize step-size sequences of the form $\dfrac{c}{k^{\iota}}$ and search over both $\iota \in (0,1), c > 0$ for optimal sets of parameters. The simulation with tuned step-size is shown in \autoref{fig:rps_variance_10}. We see that despite the larger noise injected into $U^p$, the strategies $X = (X^p)_{p\in \mathcal{N}}$ still converge toward the vicinity of the mere VSS. 		
	\begin{figure}[htp!]
		\centering
		\includegraphics[draft = false, scale = 0.5]{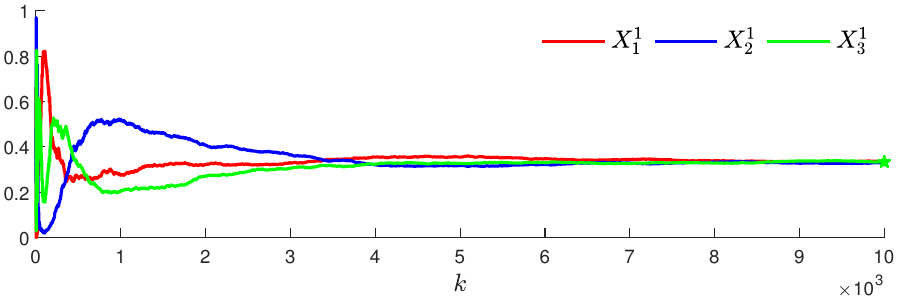}
		\caption{$\varsigma = 1, \sigma_\zeta^2 = 1$, $t_k = 4/k, \tau_k = 1/k$.} 
		\label{fig:rps_variance_1}
	\end{figure}

	\begin{figure}[htp!]
		\centering
		\includegraphics[draft = false, scale = 0.5]{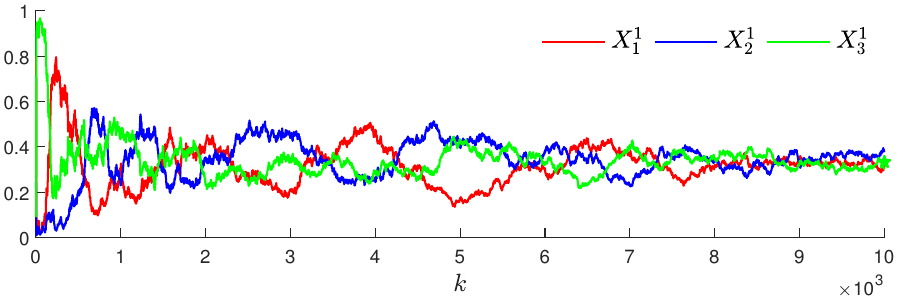}
		\caption{$\varsigma = 1, \sigma^2_\zeta = 10$, $t_k = 0.23/k^{0.48}, \tau_k = 0.34/k^{0.88}$.} 
		\label{fig:rps_variance_10}
	\end{figure}
\end{example}

\begin{example}(\textbf{Wireless power control game with a non-concave potential})
	\label{example:wireless} 
	Consider the wireless power control game in \cite{Tatarenko19}, where $\mathcal{N}= \{1, \ldots, N\}$ network users decide on \textit{intensities} $x^p \in \mathbb{R}$ of power flow to send over a wireless network. These intensities are converted to the transmitted power through an exponential function, i.e., $\exp(x^p) \in \mathbb{R}_{> 0}$. The payoff function for each user $p \in \mathcal{N}$ is modeled as,
	\begin{equation} 
		\textstyle \mathcal{U}^p(x^p; x^{-p}) = \log\left(1+ \dfrac{a^p \exp(x^p)}{1 + \sum_{p \neq r} a^r \exp(x^r)}\right) - \mathpzc{K}^p(x^p)
	\end{equation}
	where $a^p \in (0, 1], \forall p$ and $\mathpzc{K}^p(x^p) = b^p\log(1+\exp(x^p)) - c^p x^p$ is the cost of user $p$ for transmission, $b^p > 0, c^p \geq 0$. The partial-gradient $U^p$ is calculated to be,
	\begin{equation}
		\textstyle U^p(x) = \dfrac{a^p \exp(x^p)}{1 + \sum_{r \in \mathcal{N}} a^r \exp(x^r)} - \dfrac{b^p \exp(x^p)}{1+\exp(x^p)} + c^p.
	\end{equation}
	The second-order partial derivative of $\mathcal{U}^p$ can be calculated to be,
	\begin{equation}
	\dfrac{\partial^2 \mathcal{U}^p(x^p; x^{-p}) }{\partial x^q \partial x^p}  = \begin{cases}  \dfrac{-a^p a^q\exp(x^p + x^q)}{(1 + \sum_{r \in \mathcal{N}} a^r \exp(x^r))^2} & p \neq q \\[3ex] 		
		\begin{aligned} 
			& \dfrac{a^p\exp(x^p) (1 + \sum_{r \neq p} a^r \exp(x^r))}{(1 + \sum_{r \in \mathcal{N}} a^r \exp(x^r))^2}\\  
			& - \dfrac{b^p\exp(x^p)}{(1+\exp(x^p))^2} 
		\end{aligned}  & p = q
	\end{cases} 
	\end{equation}
	which generates a symmetric Jacobian $\Jacob_U(x), \forall x$,
	\begin{equation} {\mathbf{J}_U(x)}_{p, p} = 	\dfrac{a^p\exp(x^p)(1 + \sum_{r \neq p} a^r \exp(x^r))}{(1 + \sum_{r \in \mathcal{N}} a^r \exp(x^r))^2} - \dfrac{b^p \exp(x^p)}{(1+\exp(x^p))^2} \end{equation}
	and 
	\begin{equation}
		{\mathbf{J}_U(x)}_{p, q} = \dfrac{-a^p a^q \exp(x^p + x^q)}{(1+ \sum_{r \in \mathcal{N}} a^r \exp(x^r))^2}, p \neq q,
	\end{equation}
	hence the game is a potential game with a non-concave potential function $P$ such that $\nabla P = U$.  
	
    Consider an example with $N =2$ and problem parameters, $a = (a^1, a^2) = (1,1), b =( b^1, b^2) = (4, 4), c = (c^1, c^2) = (3, 3)$. For all players, $Z^p(0)$ is sampled uniformly from $[0, 10]$, $\Xi^p(0) = 0$. The NE of this game can be found at $x^\star = (1.8663, 1.8663)$. The Jacobian at the NE is $\Jacob(x^\star) = \diag(\begin{bmatrix} -4.308 & 0 \end{bmatrix}^\top)$, hence by \autoref{prop:second_order_char}, the NE is a locally mere VSS. By \autoref{thm:MD2_discrete_convergence}, DA2 converges towards this NE. For each of the following experiments, we simulate for $T = 10^4$ steps and report the strategies at termination. We restrict $\Omega^p$ to be a large but compact set $[-1000,1000]^2$ and set $C^p_\epsilon$ to be the projection operator, $C^p_\epsilon(Z^p) = \text{argmin}_{y^p \in \Omega^p} \|\epsilon^{-1} Z^p - y^p\|_2^2$  with $\epsilon = 1$. Unless specified otherwise, all other parameters $\gamma, \alpha, \beta$ are kept as $1$.  All additive noise are zero-mean Gaussian with the same variance $\sigma^2_\zeta$.
	
	We start our experiment with a small variance  $\sigma^2_\zeta  =  0.1$ and set the step-sizes to be $t_k = 0.39/k^{0.26}, \tau_k = 0.12/k^{0.64}$. \autoref{figure:MD2_Noisy_0_1} shows that DA2 converges to the NE. We increase the variance to $\sigma^2_\zeta = 10$ and \autoref{figure:MD2_Noisy_10} shows similar observation.
		\begin{figure}[htp!]
		\centering 
		\includegraphics[draft = false   , scale = 0.5]{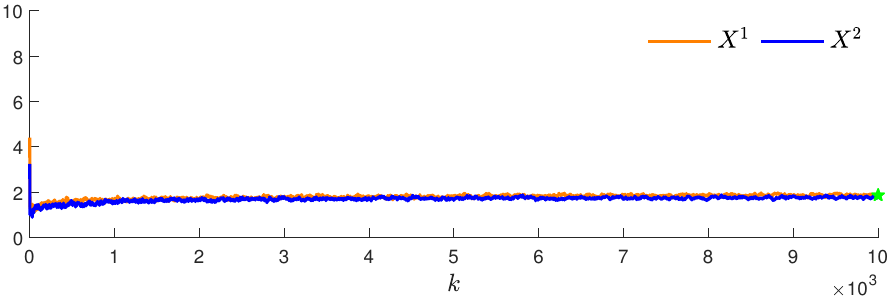}
		\caption{$\sigma^2_\zeta = 0.1$, $t_k = 0.39/k^{0.26}, \tau_k = 0.12/k^{0.64}$}
		\label{figure:MD2_Noisy_0_1}
	\end{figure}
	
	\begin{figure}[htp!]
		\centering 
		\includegraphics[draft = false, scale = 0.5]{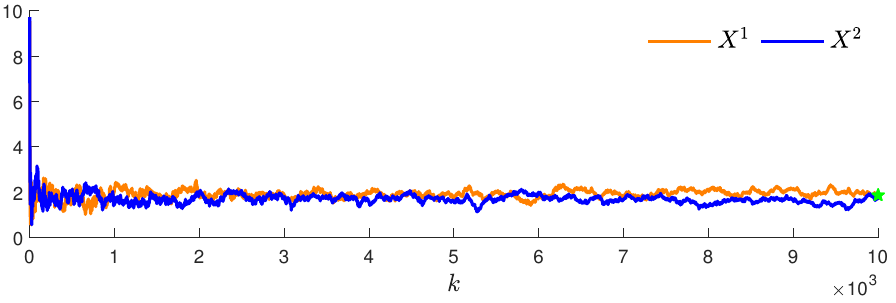}
		\caption{$\sigma^2_\zeta = 10$, $t_k = 0.38/k^{0.47}, \tau_k = 0.13/k^{0.71}$}
		\label{figure:MD2_Noisy_10}
	\end{figure}

	Next, we investigate some possible computational issues can arise when we increase the number of players in this game. Consider an example with $N = 10$ players. The game parameters are: \vspace{-0.1cm} 
	\begin{align*} & a \!= (12,\! 4, \!2, \!5, \!20, \!20, \!12, \!3, \!1, \!2),\\ & b \!= (15,\! 20,\! 20,\! 14,\! 21,\! 20,\! 16,\! 19,\! 17,\! 17),\\ & c \!= (13, \!12, \!14,\! 8, \!10,\! 19, \!10,\! 12, \!15, \!1). \end{align*} 
	A variance of $\sigma^2_\zeta = 1$ is applied for the following experiments. We employ step-size sequences  $t_k = \nicefrac{0.4}{k^{0.033}}, \tau_k = \nicefrac{0.78}{k^{0.001}}$ across all players. The NE of this game is located at $x^\star = (1.87, 0.41, 0.85, 0.28, -0.10, 16,0.51, 0.54, 2.01, -2.77)$ (rounded to two decimal places) and was confirmed to be a \textit{nearly} mere VSS (\autoref{remark:weak_vss}). We note that an exact mere VSS is difficult to produce for this game when there are a large number of players. 
	
	\begin{figure}[htp!]
		\centering 
		\includegraphics[draft = false   , scale = 0.5]{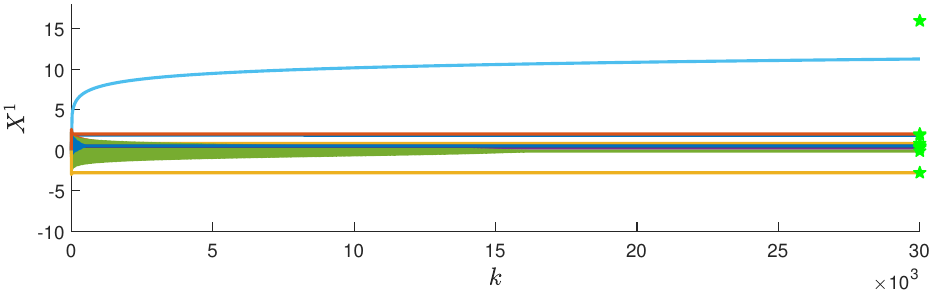}
		\caption{$\sigma^2_\zeta = 1$, $t_k = 0.40/k^{0.033}, \tau_k = 0.78/k^{0.001}$ ($X^2$ omitted).}
		\label{figure:Wireless_10p}
	\end{figure}
	\autoref{figure:Wireless_10p} shows that the strategy $X^1$ converges to $(1.87, 0.41, 0.85, 0.28, -0.10, 11.29, 0.51, 0.54, 2.01, -2.77)$ which is exactly the same as $x^\star$ in all entries except for the sixth entry ($16.51$ vs $11.29$), which is likely due to the small rate parameters employed in the step-sizes. This highlights the difficulty of employing the same step-size sequences across all players. We adjust all the parameters $\beta, \alpha$ and step-sizes $c k^{-\iota}, \iota \in (0,1), c > 0$ on a per-player basis, which resulted in a much closer convergence to the NE as opposed to applying them across all players uniformly  (\autoref{figure:Wireless_10p_better}).  
	
	\begin{figure}[htp!]
		\centering 
		\includegraphics[draft = false   , scale = 0.5]{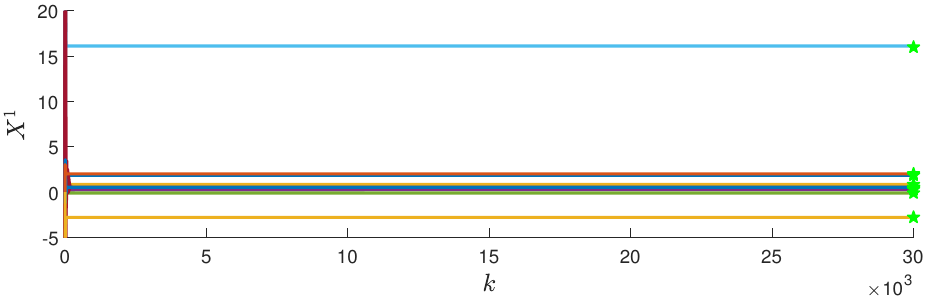}
		\caption{$\sigma^2_\zeta = 1$, per-player tuned parameters ($X^2$ omitted).}
		\label{figure:Wireless_10p_better}
	\end{figure}
\end{example}

\begin{example}(\textbf{Learning to Generate a Gaussian})
	
	Let $\mathpzc{Z} \sim \mathpzc{P}(\mathpzc{z})$ and $\mathpzc{X} \sim \mathpzc{Q}(\mathpzc{x})$ be two random variables. We wish to construct a model $\mathpzc{G}_\theta: \mathbb{R}^\mathpzc{r} \to \mathbb{R}^\mathpzc{m}$, $\mathpzc{r}, \mathpzc{m} \geq 1$, with an unknown, continuous parameter $\theta$ such that $\mathpzc{G}_\theta(\mathpzc{Z})$ recovers the statistics (mean, variance, etc.) of $\mathpzc{X}$. Following \cite{Daskalakis18}, $\mathpzc{G}_\theta$ can be constructed through solving, 
	\begin{equation}
		{\text{min}_{\theta}} \thinspace {\text{max}_{w}} \thinspace \mathbb{E}_{\mathpzc{X}}(\mathpzc{D}_w(\mathpzc{X})) - \mathbb{E}_{\mathpzc{Z}}(\mathpzc{D}_w(\mathpzc{G}_\theta(\mathpzc{Z}))), 
		\label{eqn:gan_objective}	
		\tag*{($\star$)}
	\end{equation} 
	where the model $\mathpzc{D}_w: \mathbb{R}^\mathpzc{m} \to \mathbb{R}$ is parametrized by an unknown continuous parameter $w$. This problem can be thought of as a game between the owners of the models $\mathpzc{G}_\theta$ and $\mathpzc{D}_w$ whereby $\theta, w$ are their respective strategies. The owner of $\mathpzc{G}_\theta$ (or the \textit{designer}) varies $\theta$ and uses $G_\theta(\mathpzc{Z})$ to estimate the statistics of $\mathpzc{X}$, whereas the owner of $\mathpzc{D}_w$ (or the \textit{tester}) varies $w$ in an attempt to incur the largest penalty possible for the discrepancy between $G_\theta(\mathpzc{Z})$ and $\mathpzc{X}$. 

	\begin{figure}[htp!]
		\centering 
		\includegraphics[scale = 0.7,  draft = false   ]{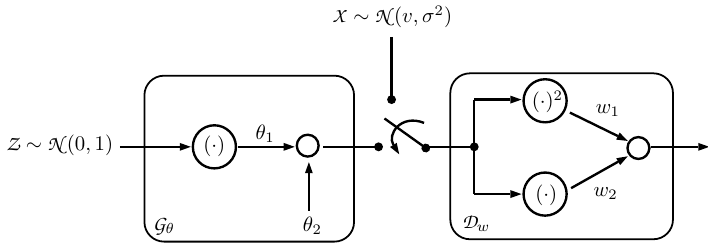}
		\caption{Diagram representation of $\mathpzc{G}_\theta$ and $\mathpzc{D}_w$ models}
	\end{figure}

	We construct a more complicated variant of the examples in \cite{Daskalakis18, Bo_Pavel_TechNote_2021}, whereby we assume that the $\mathpzc{G}_\theta$ owner wishes to learn the mean and variance of a one-dimensional Gaussian at the same time: let $\mathpzc{Z} \sim \mathpzc{N}(0,1)$, and $\mathpzc{X} \sim \mathpzc{N}(v, \sigma^2)$, where $v \in \mathbb{R}, \sigma \in \mathbb{R}_{> 0}$ are the mean and standard deviation of a Gaussian distribution. Let $\mathpzc{D}_w(\mathpzc{X}) = w_1 \mathpzc{X}^2 + w_2 \mathpzc{X}, w = (w_1, w_2) \in \mathbb{R} \times \mathbb{R}$ and $\mathpzc{G}_\theta(\mathpzc{Z}) = \theta_1 \mathpzc{Z}+\theta_2, \theta = (\theta_1, \theta_2) \in \mathbb{R} \times \mathbb{R}$, then a brief calculation shows that 
	\begin{equation*} (\star) = w_1((\sigma^2 + v^2) - (\theta^2_1 + \theta_2^2)) + w_2 (v - \theta_2). \end{equation*}
	Let $x^1 = \theta, x^2 = w$, we obtain a two-player ZS game, 
	\begin{equation}
		\textstyle\mathcal{U}^1(x^1; x^2) =  x^2_1(\sigma^2 + v^2 - \sum_{i = 1}^2 (x_i^1)^2) + x^2_2( v - x_2^1) 
	\end{equation}
	and $\mathcal{U}^2(x^2; x^1) =  -\mathcal{U}^1(x^1; x^2)$	
	where the action sets are, $\Omega^1 =  \mathbb{R} \times \mathbb{R} = \Omega^2$. The pseudo-gradient is, $
	U(x) = (-2{x^2_1} x^1_1, -2 {x^2_1} x^1_2 -  {x^2_2}, \sum_{i = 1}^2 (x_i^1)^2 - (\sigma^2 + v^2), x^1_2 - v),$
	which implies an interior NE at $x^\star = ({x^1}^\star, {x^2}^\star) = ((\sigma, v), (0, 0))$. The Jacobian of $U$ is shown to be, \begin{equation} \mathbf{J}_{U}(x) = 
	\begin{bmatrix} -2x_1^2 & 0 & -2x^1_1 & 0 \\ 
		0 & -2x^2_1 & -2x^1_2 & -1 \\ 
		\hphantom{-}2x^1_1 & \hphantom{-}2x^1_2 & 0 & 0 \\ 
		0 & 1 & 0 & 0 
	\end{bmatrix}. \end{equation} 
	Since  $y^\top \gJ_U(x) y= \frac{1}{2} y^\top(\mathbf{J}_{U}(x) + \mathbf{J}^\top_{U}(x))y = -2((y_1)^2 + (y_2)^2) x^2_1$ and $x^2_1$ is not restricted to be positive, therefore, this game is not monotone \cite[Prop. 2.3.2]{Facchinei_I}. However, at the interior NE, ${x^2}^\star = ({x_1^2}^\star, {x_2^2}^\star) = (0,0) \implies {x_1^2}^\star = 0$, and hence  $y^\top\gJ_U(x^\star)y = \frac{1}{2} y^\top(\mathbf{J}_{U}(x^\star) + \mathbf{J}^\top_{U}(x^\star))y = -2((y_1)^2 + (y_2)^2)(0)  = 0$. By \autoref{prop:second_order_char}(ii'), this implies that $x^\star$ is locally merely VS.  
	
	Consider an example with $v = 10$, $\sigma = 5$. We perform two simulations of the discrete-time MD2 with or without perturbation on the pseudo-gradient. We assume $Z(0)$ is uniformly sampled as follows: $Z^1(0) \sim \text{unif}([10, 50] \times [20, 40])$, $Z^2(0) \sim \text{unif}([0, 5] \times [0, 5])$, and $\Xi(0) = \mathbf{0}$. We restrict $\Omega^p$ to be a compact set $[-1000,1000]^2$ and set $C^p_\epsilon$ to be the projection operator as in \autoref{example:wireless}. We set $\lambda = \beta = \alpha = 1$. Each of the following simulations is ran for $T = 4 \times 10^4$ steps, with initial conditions randomly sampled as stated above. Note that our simulations has experimentally shown to accommodate larger sets of initial conditions than the ones provided above. 

First, we consider the noiseless case, i.e., $\sigma^2_\zeta = 0$. From \autoref{fig:GAN_Noiseless}, we see that $X^1$ converges to $X^{1\star}$ (highlighted using a green star). 	
\begin{figure}[htp!]
	\centering 
	\includegraphics[scale = 0.5, draft = false   ]{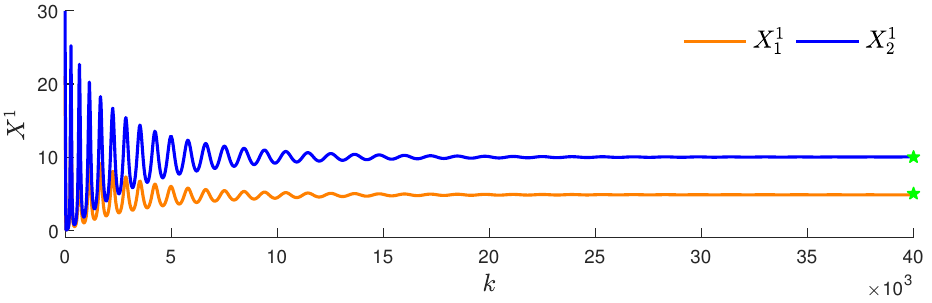}
	\caption{$\sigma^2_\zeta = 0$,  $t_k =\dfrac{0.0108}{k ^{0.3971}}$, $\tau_k = \dfrac{0.0485}{k^{0.7877}}$ ($X^2$ omitted).}
	\label{fig:GAN_Noiseless}
\end{figure}

Next, we consider the harder case where $U^p$ is subjected to zero-mean Gaussian noise with a very large variance $\sigma^2_\zeta = 30^2$ for each $p$. The trajectories of $X^1$ are shown in \autoref{fig:GAN_Noisy}. Our previous observation continues to hold. 
\begin{figure}[htp!]
	\centering 
	\includegraphics[scale = 0.5, draft = false   ]{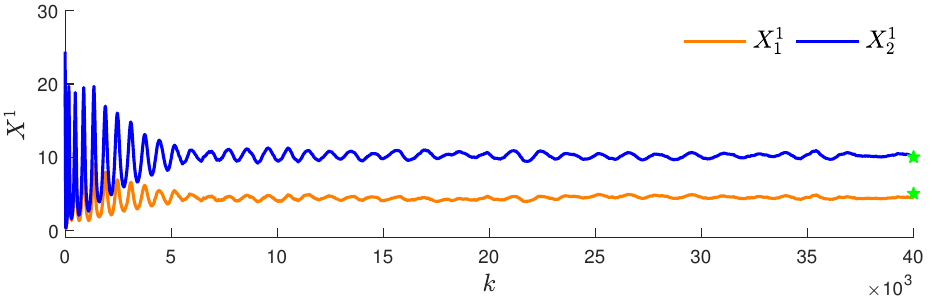}
	\caption{$\sigma^2_\zeta  = 30^2$,  $t_k =\dfrac{0.0108}{k ^{0.3971}}$, $\tau_k = \dfrac{0.0485}{k^{0.7877}}$ ($X^2$ omitted). }
	\label{fig:GAN_Noisy}
\end{figure}
\end{example}

\section{Conclusions}
\label{section:conclusion} 
In this paper, we have shown that MD2 overcomes many shortcomings of the first-order MD as well as other members of this class of dynamics, such as MD with time-averaging or discounting. To summarize our findings: first, we showed that the trajectories of MD2 converge exactly to an interior mere (and not necessarily strictly) VSS (\autoref{thm:main}). As such, it converges to interior mere NEs of monotone and pseudo-monotone games, and interior strict NE of quasi-monotone games (\autoref{cor:convergence_to_NE}). Through simulation, we also found that MD2 is robust around interior mere VSS (\autoref{remark:weak_vss}, \autoref{example:RPS_simulation}). Next, we provided a modification to MD2 based on vanishing perturbation which results in an exponential rate of convergence towards a strongly stable VSS (\autoref{thm:MD2_gamma}). We then showed MD2 can achieve no-regret while converging beyond a strict VSS, such as to an interior/fully-mixed NEs of zero-sum finite games (\autoref{thm:no_regret}). This result improves our current understandings  between no-regret learning and convergence \cite{Mertikopoulos18, Flokas}. We then provided several ways of deriving primal-space dynamics from MD2. Finally, we moved to the scenario where the game evolves in discrete-time while the pseudo-gradient is subjected to noise perturbation. Using stochastic approximation techniques, we were able to relate the limiting behavior of MD2 with that of its discrete-time, noisy counterpart, and showed that the discrete MD2 with noisy observations or second-order dual averaging (DA2) can converge to the interior mere VSS under standard regularity assumptions (\autoref{thm:MD2_discrete_convergence}). 

There are several outstanding issues that could be better understood. First, our work did not provide a thorough convergence proof for local VSS, which are dealt with appropriate initialization. Moreover, we did not address convergence towards boundary point for MD2, as our proofs relied on an \textit{interiority condition} associated with interior NE \eqref{eqn:MD2_rest_condition}. One possible approach for dealing with these boundary solutions is to adopt the approach of \cite{Mertikopoulos16}, by restricting ourselves to the context of finite games, where these boundary solutions are quite relevant. Another basic issue is that it is still not entirely clear to us at this time how MD2 relates to other existing continuous-time dynamics for optimization or games. Aside from the cases that we know of, e.g., \cite{Dian_LP_CDC2020}, it is possible that there are other existing algorithms that appear as specific instances of MD2. 

Our work opens up an extensive array of directions on the interface between dynamical systems and games. As a starting point, in the continuous-time, an interesting direction would be to explain the difference between MD and MD2 in zero-sum games through a volume compressibility perspective as in \cite{Giannou21, Coucheney}. We can also try to craft third or even higher-order versions of MD based on the technique in \cite{Bo_LP_TAC2020}. In the discrete-time case, we can further reduce the information, to that of the \textit{zero-order} information case (or full-bandit feedback). Moreover, we can tackle the case whereby the game itself has parameters that are time-varying. Finally, it is also worthwhile to examine MD2 in a discrete-time setting whereby the mirror map is replaced with a proximal operator.

\section{Appendix}
\renewcommand{\arraystretch}{1.2}
\begin{table}[h]
	\begin{tabular}{  |p{1.5cm}||p{6cm}|}
		\hline
		\textbf{Symbol} & \textbf{Definition}\\
		\hline
		\hline
		$x/x(t)/X_k$  & Strategy/at time $t$/at time $k$ \\
		\hline
		$z/z(t)/Z_k$ & Dual aggregate/at time $t$/at time $k$\\
		\hline
		$\xi/\xi(t)/\Xi_k$ & Primal aggregate/at time $t$/at time $k$ \\
		\hline
		$\mathcal{U}^p/U^p/U$ & Payoff/partial-gradient/pseudo-gradient\\
		\hline		
		$\mathbf{J}_U/\gJ_U$ & Jacobian of $U$/symmetric game Jacobian of $U$\\
		\hline
		$\vartheta^p/\psi^p_\epsilon/\psi^{p\star}_\epsilon$ & Regularizer/$\psi^p_\epsilon = \epsilon\vartheta^p$/convex conjugate of $\psi^p_\epsilon$\\
		\hline
		$C^p_\epsilon/C_\epsilon/\mathbf{J}_{C_\epsilon}$ & Player $p$'s/overall mirror map/Jacobian of $C_\epsilon$\\
		\hline
		$\gamma, \epsilon,\alpha,\beta$ & Parameters associated with MD/MD2/DA2\\
		\hline
		$\mathpzc{I}/\mathpzc{O}$ & Identity matrix/zero matrix \\
		\hline
	\end{tabular}
	\caption{A list of main notations used in this paper.}
	\label{table:1}  
\end{table}

The following two lemmas are standard in this area of literature, see \cite{Mertikopoulos16, Giannou21, Bo_Pavel_TechNote_2021}. In particular, \autoref{lem:mirror_map_properties_legendre} follows directly from the Legendre theorem \cite{Bauschke97}. 
\begin{lem}
	\label{lem:mirror_map_properties_legendre}
	Let $\psi^p_{\epsilon}\! \! \coloneqq \!\epsilon \vartheta^p\!$, $\epsilon \!>\!\! 0$, where  $\vartheta^p$ satisfies \autoref{assump:regularizer}(ii), and let ${\psi^p_{\epsilon}}^\star$ be the convex conjugate of $\psi^p_{\epsilon}$. 
	Then, 
	\begin{itemize}
		\item[(i)]  ${\psi^p_{\epsilon}}^\star\!:\! \mathbb{R}^{n_p} \!\to \!\mathbb{R}\!\cup\!\{\infty\}$ is closed, proper, strictly convex, steep, and finite-valued over $\mathbb{R}^{n_p}$.
		\item[(ii)]  	$\nabla \psi^p_{\epsilon}\! :\!  \interior(\dom(\psi^p_{\epsilon}))\!  \to\!  \interior(\dom({\psi^p_{\epsilon}}^\star))$ is a homeomorphism with inverse mapping $(\nabla \psi^p_{\epsilon})^{-1} \!=\! \nabla {\psi^p_{\epsilon}}^\star \!=\! C^p_{\epsilon}.$ 
		\item[(iii)]  $C^p_{\epsilon}$ is strictly monotone on $\interior(\dom({\psi^p_{\epsilon}}^\star))$. 
	\end{itemize}	
\end{lem}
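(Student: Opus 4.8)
The plan is to read off all three claims from the Fenchel--Legendre correspondence between $\psi^p_\epsilon \coloneqq \epsilon\vartheta^p$ and its conjugate ${\psi^p_\epsilon}^\star$, using the conjugate subgradient theorem recorded in Section~II together with the Legendre theorem \cite{Bauschke97}. First I would record the elementary duality consequences of $\rho$-strong convexity. Since $\vartheta^p$ is closed, proper and $\rho$-strongly convex under \autoref{assump:regularizer}(ii), $\psi^p_\epsilon$ is closed, proper and $\epsilon\rho$-strongly convex. A standard conjugacy result then gives that ${\psi^p_\epsilon}^\star$ is finite on all of $\mathbb{R}^{n_p}$ and differentiable with $(\epsilon\rho)^{-1}$-Lipschitz gradient; in particular $\dom({\psi^p_\epsilon}^\star) = \mathbb{R}^{n_p}$, so ${\psi^p_\epsilon}^\star$ is automatically closed and proper, and the steepness requirement is satisfied vacuously because $\dom({\psi^p_\epsilon}^\star)$ has empty relative boundary. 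This already disposes of the closed/proper/steep/finite-valued part of (i).

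Next I would identify the mirror map with the conjugate gradient and establish the homeomorphism (ii). Strong convexity makes the maximizer in the $\argmax$ definition of $C^p_\epsilon$ unique, so by the conjugate subgradient theorem $z \in \partial\psi^p_\epsilon(x) \Longleftrightarrow x \in \partial{\psi^p_\epsilon}^\star(z)$, and single-valuedness upgrades $\partial{\psi^p_\epsilon}^\star$ to a genuine gradient, giving $C^p_\epsilon = \nabla{\psi^p_\epsilon}^\star$. Invoking the Legendre theorem for $\psi^p_\epsilon$ then yields that $\nabla\psi^p_\epsilon$ is a homeomorphism from $\interior(\dom(\psi^p_\epsilon))$ onto $\interior(\dom({\psi^p_\epsilon}^\star))$ with inverse $\nabla{\psi^p_\epsilon}^\star = C^p_\epsilon$, which is exactly (ii). The two remaining assertions then follow together: a differentiable, essentially strictly convex conjugate has a strictly monotone gradient, so $C^p_\epsilon$ is strictly monotone (iii) and ${\psi^p_\epsilon}^\star$ is strictly convex, completing (i).

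The crux is the strict-convexity / strict-monotonicity half. Strong convexity of $\psi^p_\epsilon$ by itself controls only the \emph{smoothness} of ${\psi^p_\epsilon}^\star$; strict convexity of ${\psi^p_\epsilon}^\star$ is instead the conjugate-dual of the \emph{essential smoothness} (interior differentiability, i.e.\ single-valuedness of $\nabla\psi^p_\epsilon$) of $\psi^p_\epsilon$. Hence the main obstacle I anticipate is arguing that the regularizer furnishing $C^p_\epsilon$ is of Legendre type on the relevant interior, so that $\nabla\psi^p_\epsilon$ is honestly single-valued there and the Legendre theorem can be applied; this is precisely the content needed for both (ii) and the strict half of (i)--(iii), and it is where the canonical examples (e.g.\ the negative entropy on the simplex) live. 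Once $\psi^p_\epsilon$ is known to be essentially smooth, essential strict convexity of ${\psi^p_\epsilon}^\star$ follows by the Legendre duality, and because ${\psi^p_\epsilon}^\star$ is finite-valued on the whole space this sharpens to global strict convexity, delivering the strict monotonicity of $C^p_\epsilon$ on $\interior(\dom({\psi^p_\epsilon}^\star))$ as an immediate corollary.
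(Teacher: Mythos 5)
Your route is the same one the paper takes: the paper offers no detailed proof of this lemma at all, stating only that it is ``standard in this area of literature'' and that it ``follows directly from the Legendre theorem'' of Borwein--Bauschke, combined with the conjugate subgradient theorem already recorded in Section~II. Your write-up is precisely that argument unpacked: strong convexity of $\psi^p_\epsilon$ gives a finite-valued, $(\epsilon\rho)^{-1}$-smooth conjugate (so closed, proper, and vacuously steep since $\dom({\psi^p_\epsilon}^\star)=\mathbb{R}^{n_p}$); the conjugate subgradient theorem identifies $C^p_\epsilon=\nabla{\psi^p_\epsilon}^\star$; and the Legendre theorem delivers the homeomorphism in (ii) together with the strict statements in (i) and (iii).

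The obstacle you flag at the end, however, deserves to be called what it is: a genuine gap, and one that is \emph{not closable} from \autoref{assump:regularizer}(ii) as literally stated, so the defect sits in the lemma's hypotheses rather than in your argument. Strong convexity controls essential strict convexity of $\psi^p_\epsilon$ but says nothing about its essential smoothness, which is exactly what strict convexity of ${\psi^p_\epsilon}^\star$, the surjectivity in (ii), and strict monotonicity in (iii) require. The paper's own third mirror-map example is a counterexample to the literal statement: $\vartheta^p=\frac{1}{2}\|\cdot\|_2^2$ with $\dom(\vartheta^p)=\Omega^p$ compact satisfies \autoref{assump:regularizer}(ii), yet $C^p_\epsilon$ is the Euclidean projector, which is merely (not strictly) monotone --- two exterior points projecting to the same boundary point give $(C^p_\epsilon(z_1)-C^p_\epsilon(z_2))^\top(z_1-z_2)=0$ with $z_1\neq z_2$; correspondingly ${\psi^p_\epsilon}^\star$ is affine along the associated rays, hence not strictly convex, and $\nabla\psi^p_\epsilon$ maps $\interior(\Omega^p)$ onto $\epsilon\,\interior(\Omega^p)$, a proper subset of $\interior(\dom({\psi^p_\epsilon}^\star))=\mathbb{R}^{n_p}$, so it is not a homeomorphism \emph{onto} that set. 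The lemma is therefore true only under an additional implicit Legendre-type (essential smoothness) hypothesis on $\vartheta^p$ --- which is what the paper's appeal to the Legendre theorem silently presupposes --- and under that hypothesis your conditional completion of the proof goes through verbatim. Your diagnosis of the crux is exactly right; just state the extra hypothesis rather than hoping to derive it.
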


\begin{lem}
	\label{lem:mirror_map_properties}
	Let $\psi^p_{\epsilon}\! \!\coloneqq \!\epsilon \vartheta^p\!$, $\epsilon \!>\!\! 0$, where  $\vartheta^p$ satisfies   \autoref{assump:regularizer}(i), and let ${\psi^p_{\epsilon}}^\star$ be the convex conjugate of $\psi^p_{\epsilon}$. 
	Then, 
	\begin{enumerate}
		\item[(i)] ${\psi^p_{\epsilon}}^\star\!:\!\mathbb{R}^{n_p} \!  \to \!\mathbb{R}\cup\{\infty\}$ is closed, proper, convex and finite-valued over $\mathbb{R}^{n_p}$, i.e., $\dom({\psi^p_{\epsilon}}^\star) = \mathbb{R}^{n_p}$.
		\item[(ii)]  ${\psi^p_{\epsilon}}^\star$ is continuously differentiable and $\nabla {\psi^p_{\epsilon}}^\star \!= \!C^p_{\epsilon}$. 
		\item[(iii)]  $C^p_{\epsilon}\!$ is surjective from $\mathbb{R}^{n_p}\!$ onto $\rinterior(\Omega^p)$ whenever $\psi^p_{\epsilon}\!$ is steep, and  onto $\Omega^p$ whenever $\psi^p_{\epsilon}\!$ is non-steep.  
	\end{enumerate}
\end{lem}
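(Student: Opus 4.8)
This lemma packages three standard facts from Fenchel conjugacy, so the plan is to invoke the conjugacy correspondence for the pair $(\psi^p_\epsilon, {\psi^p_\epsilon}^\star)$ rather than to compute anything explicitly. First I would record that $\psi^p_\epsilon = \epsilon\vartheta^p$ inherits from $\vartheta^p$ (since $\epsilon > 0$) the properties of being proper, closed and convex, with effective domain $\dom(\psi^p_\epsilon) = \Omega^p$, supercoercive, and essentially strictly convex; these are immediate from \autoref{assump:regularizer}(i) and the standard references \cite{Beck17, Wets09}. Everything then reduces to reading off dual properties of ${\psi^p_\epsilon}^\star$.

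For part (i), the Fenchel--Moreau theorem makes ${\psi^p_\epsilon}^\star$ proper, closed and convex automatically; to upgrade to finiteness on all of $\mathbb{R}^{n_p}$ I would use the duality between supercoercivity of $\psi^p_\epsilon$ and finiteness (indeed continuity) of its conjugate everywhere \cite{Beck17}. Concretely, supercoercivity guarantees that $y \mapsto \langle y, z\rangle - \psi^p_\epsilon(y)$ attains a finite supremum for every $z$, so $\dom({\psi^p_\epsilon}^\star) = \mathbb{R}^{n_p}$. For part (ii), essential strict convexity of $\psi^p_\epsilon$ forces ${\psi^p_\epsilon}^\star$ to be essentially smooth, i.e. differentiable on $\interior(\dom({\psi^p_\epsilon}^\star))$, which by (i) is all of $\mathbb{R}^{n_p}$; since the gradient of a finite convex function is continuous, this yields ${\psi^p_\epsilon}^\star \in \mathcal{C}^1$. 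The identification $\nabla{\psi^p_\epsilon}^\star = C^p_\epsilon$ is then read off from the conjugate subgradient theorem quoted in Section~II: $x \in \partial{\psi^p_\epsilon}^\star(z) \iff z \in \partial\psi^p_\epsilon(x) \iff x \in \argmax_{y \in \Omega^p}[\langle y, z\rangle - \psi^p_\epsilon(y)] = C^p_\epsilon(z)$, and strict convexity makes this maximizer unique, so $\partial{\psi^p_\epsilon}^\star(z) = \{C^p_\epsilon(z)\}$.

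For part (iii) I would obtain the range of $C^p_\epsilon = \nabla{\psi^p_\epsilon}^\star$ from the subdifferential duality $\range(\partial{\psi^p_\epsilon}^\star) = \dom(\partial\psi^p_\epsilon)$, splitting according to steepness. When $\psi^p_\epsilon$ is steep it is Legendre, so by the Legendre theorem \cite{Bauschke97} the map $\nabla\psi^p_\epsilon$ is a bijection from $\rinterior(\Omega^p) = \interior(\dom\psi^p_\epsilon)$ onto $\mathbb{R}^{n_p}$ with inverse $\nabla{\psi^p_\epsilon}^\star = C^p_\epsilon$; hence $C^p_\epsilon$ is onto $\rinterior(\Omega^p)$ and never attains the relative boundary. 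When $\psi^p_\epsilon$ is non-steep, $\dom(\partial\psi^p_\epsilon)$ reaches the relative boundary of $\Omega^p$: given any $x \in \Omega^p$, the optimality condition $z \in \nabla\psi^p_\epsilon(x) + N_{\Omega^p}(x)$ is solvable for $z$, so every $x \in \Omega^p$ lies in $\range(C^p_\epsilon)$, giving surjectivity onto all of $\Omega^p$.

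The only genuinely delicate point is part (iii): one must verify that steepness is exactly the property that excludes relative-boundary points from $\range(C^p_\epsilon)$, and that in its absence the normal-cone contributions $N_{\Omega^p}(x)$ at the boundary fill out the remaining points of $\Omega^p$. The essential-smoothness/essential-strict-convexity half of the Legendre duality (\cite{Bauschke97}) is the tool that makes both directions precise, whereas parts (i)--(ii) are routine once the supercoercivity--finiteness and strict-convexity--smoothness dualities are cited.
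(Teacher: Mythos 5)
Your proposal is correct, but note that the paper itself contains no proof of this lemma to compare against: it is dispatched with the remark that the two mirror-map lemmas are ``standard in this area of literature,'' citing \cite{Mertikopoulos16, Giannou21, Bo_Pavel_TechNote_2021}. The conjugate-duality chain you assemble is precisely the argument those references use: Fenchel--Moreau for closedness, properness and convexity of ${\psi^p_\epsilon}^\star$; the supercoercivity/finite-conjugate duality for $\dom({\psi^p_\epsilon}^\star)=\mathbb{R}^{n_p}$ in (i); essential strict convexity of $\psi^p_\epsilon$ forcing essential smoothness of ${\psi^p_\epsilon}^\star$ together with the conjugate subgradient theorem (and uniqueness of the argmax from strict convexity plus attainment from supercoercivity) for (ii); and $\range(\partial {\psi^p_\epsilon}^\star)=\dom(\partial \psi^p_\epsilon)$ with the Legendre theorem \cite{Bauschke97} for (iii). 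Two small points deserve flagging. First, under \autoref{assump:regularizer}(i) the regularizer is by definition steep (Legendre), so the non-steep clause of (iii) is vacuous under the lemma's stated hypothesis; it is evidently included with \autoref{assump:regularizer}(ii)-type regularizers in mind (e.g., the Euclidean regularizer on a compact set), and your normal-cone argument is the right one for that case. Second, your solvability claim $z^p \in \nabla\psi^p_\epsilon(x^p) + N_{\Omega^p}(x^p)$ at relative-boundary points tacitly requires $\partial\psi^p_\epsilon(x^p)\neq\emptyset$ for \emph{every} $x^p \in \Omega^p$, which does not hold for arbitrary closed proper convex non-steep functions ($\dom(\partial\psi^p_\epsilon)$ is only guaranteed to contain $\rinterior(\Omega^p)$); it is rescued exactly by the paper's standing convention in Section~II that such regularizers are defined and differentiable on an open set containing $\Omega^p$, so you should cite that convention explicitly at this step. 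With that caveat made explicit, the proof is complete and matches the standard treatment the paper outsources to its references.
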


\begin{proof}(\textbf{Proof of \autoref{prop:second_order_char}})
		Let $ x, x^\prime \in \Omega$ be two arbitrarily chosen strategies. Consider $\overline x = \theta x + (1- \theta) x^\prime, \theta \in [0, 1]$. Then the proof for all these claims boil down to the equality condition found in  \cite[Prop. 1.44]{Ozdaglar}, in which it be can shown, 
		\begin{equation}
		\label{eqn:prop_second_order_char_step}
		\textstyle (x- x^\prime)^\top (U(x) - U(x^\prime)) =  \textstyle \int_{0}^1 (x^\prime - x)^\top \gJ_U(\overline x) (x^\prime - x) \,\,\mathrm{d}\theta
		\end{equation}
		Rearranging \eqref{eqn:prop_second_order_char_step}, we have,
		\begin{equation}
			\begin{split} 
			\textstyle	(x- x^\prime)^\top U(x)  = & \textstyle \int_{0}^1 (x^\prime - x)^\top \gJ_U(\overline x) (x^\prime - x)  \,\,\mathrm{d}\theta\\ & + 	(x- x^\prime)^\top U(x^\prime).
			\end{split} 
		\end{equation}
		Let $x^\prime = x^\star$, then $(x- x^\star)^\top U(x^\star) \leq 0$ and we have, 
		\begin{equation}
			\label{eqn:definiteness_condition}
			\textstyle	(x- x^\star)^\top U(x) \leq \int_{0}^1 (x^\star - x)^\top  \gJ_U(\overline x )(x^\star - x) \,\,\mathrm{d}\theta.
		\end{equation}
		By the definiteness assumptions of $\gJ_U(x)$ on $T_{\Omega}(x)$ for all $x \in \Omega$ and $\overline x = \theta x + (1-\theta)x^\star$ is contained in $\Omega$ for any $\theta \in [0, 1]$, \eqref{eqn:definiteness_condition} implies statements $(i), (ii), (iii)$.
		Next, suppose the definiteness condition on $\gJ_U(x^\star)$ holds for all $y \in T_{\Omega}(x^\star)$. Since $U$ is assumed to be continuously differentiable, therefore $y^\top \gJ_U(x) y$ is continuous for all $x \in \Omega$. Using the standard $\epsilon-\delta$ definition of continuity, this means for every $\epsilon > 0$ there exists a $\delta(\epsilon) > 0$ such that for all $x \in \Omega$, $\|x - x^\star\| < \delta(\epsilon) \implies |y^\top \gJ_U(x) y - y^\top \gJ_U(x^\star) y| < \epsilon$. The latter statement, $|y^\top \gJ_U(x) y - y^\top \gJ_U(x^\star) y| < \epsilon$, implies $y^\top \gJ_U(x) y < \epsilon + y^\top \gJ_U(x^\star) y$. Since this holds for every $\epsilon > 0$, therefore this is equivalent to $y^\top \gJ_U(x) y \leq y^\top \gJ_U(x^\star) y$, and hence $\gJ_U(x)$ shares the same definiteness property as $\gJ_U(x^\star)$ for all $x$ in some ball $\mathcal{D} = \{x \in \Omega | \|x - x^\star\| < \delta(\epsilon)\}$. Let $\overline x =\theta x + (1- \theta) x^\star$, $\theta \in [0,1]$, $x \in \mathcal{D}$. This implies \eqref{eqn:definiteness_condition} holds locally on the set of all $\overline x$, and $(i'), (ii'), (iii')$ follows. The isolation condition of $(i), (i^\prime)$ are proven in \cite[Prop. 2.7]{MZ2019}.
\end{proof}

\begin{proof}(\textbf{Proof of \autoref{thm:main}})
	(i) Consider the Lyapunov function,
	\begin{equation} 
		\hspace{-0.5cm} 
		\label{eqn:thm_main_lyapunov}
		\begin{split}
			\textstyle V(\xi, z) = & \dfrac{\alpha}{2\beta}\|\xi(t) - x^\star\|_2^2 \\ & +  \textstyle \gamma^{-1}\sum_{p \in \mathcal{N}} \psi^p_\epsilon({x^p}^\star) - {z^p}^\top {x^p}^\star + \psi_\epsilon^{p\star}(z^p).
		\end{split} 
	\end{equation} 
	
	Here, $V$ is composed of a quadratic distance term, which measures the progress of $\xi(t)$ towards $\xi^\star = x^\star$, added onto a function which consists the collection of all the terms associated with the Fenchel-Young inequality\cite[p. 88]{Beck17}, also known as the Fenchel coupling in \cite{MZ2019}. Since $\vartheta^p$ satisfies \autoref{assump:regularizer}(i), $V(\xi, z)$ is continuous, positive definite (due to the Legendre theorem), $V(\xi, z) = 0$ iff $\xi = x^\star$ and $z = C^{-1}_\epsilon(x^\star)$.  
	Taking the time-derivative of $V(\xi, z)$ along the solutions of MD2, using $x = C_\epsilon(z)$, $\nabla {\psi^p_\epsilon}^\star =  C^p_\epsilon$ and $\dot z = \gamma(U(x) - \alpha\beta^{-1} \dot \xi)$,  
	\begingroup
	\allowdisplaybreaks
	\begin{align*}
		\dot V(\xi, z)
		& = \!\dfrac{\alpha}{\beta}  (\xi - x^\star)^\top \dot \xi + \gamma^{-1}(C_\epsilon(z) - x^\star)^\top \dot z\\
		& = \!\dfrac{\alpha}{\beta}  (\xi -x^\star)^\top \dot \xi + (C_\epsilon(z) - x^\star)^\top (U(x) - \dfrac{\alpha}{\beta}\dot \xi)\\
		& = \!\dfrac{\alpha}{\beta}  (\xi - x^\star - x + x^\star)^\top \dot \xi + (C_\epsilon(z) - x^\star)^\top U(x) \\
		& = \!\dfrac{\alpha}{\beta}  (\xi - x)^\top \dot \xi + (x - x^\star)^\top U(x).
	\end{align*}
	Substituting in  $\dot \xi = \beta(x - \xi )$, we have, 
	\begin{align*}  
		\dot V(\xi, z) & =  -\alpha \beta^{-2} \|\dot \xi\|^2_2 + (x - x^\star)^\top U(x) \numberthis \label{eqn:MD2_proof_step} \\
		& \leq   -\alpha \beta^{-2} \|\dot \xi\|^2_2 \leq 0,
	\end{align*} 
	\endgroup
	where we have used $x^\star$ is a mere VSS. 
	Observe that $\dot V(\xi, z) = 0$ only if $\dot \xi = \mathbf{0} \Longleftrightarrow x = \xi$. By the Legendre theorem \cite{Bauschke97},  ${\psi^p_\epsilon}^\star$ is coercive, which implies $V(\xi,z)$ is coercive (radially unbounded) on $\mathbb{R}^n \times \mathbb{R}^n$ and hence all of its sublevel sets are compact. Let $\mathcal{D}_c = \{(\xi, z) \in \mathbb{R}^n \times \mathbb{R}^n| V(\xi,z) \leq c\}$ be a compact sublevel set for some $c > 0$.  Consider $\mathcal{E} = \{(\xi, z) \in \mathcal{D}_c |\dot V(\xi, z) = 0\} = \{(\xi, z) \in \mathcal{D}_c|\xi  = x = C_\epsilon(z)\}$. Let $(\xi(t), z(t))$ be some solution starting in $\mathcal{E}$, then $\xi = x = C_\epsilon(z) \implies \dot \xi = \dot x = \J_{C_\epsilon}(z)\dot z \implies \mathbf{0} = \J_{C_\epsilon}(z)\dot z$. By Legendre theorem and the $\mathcal{C}^2$ assumption, $\J_{C_\epsilon}(z)$ exists and is invertible, therefore $\dot z = \J_{C_\epsilon}(z)^{-1} \mathbf{0} = \mathbf{0}$. This means the largest invariant set contained in $\mathcal{E}$ is $\mathcal{S} = \{(\xi, z) \in \mathcal{E} |\dot z = \mathbf{0}\}$. By LaSalle's invariance principle \cite[Theorem 3.3]{Haddad}, any solution starting from $\mathcal{D}_c$ converges to $\mathcal{S}$ as $t \to \infty$. On $\mathcal{S}$,  $\dot z = \mathbf{0} \implies U(x)  = U(C_\epsilon(z)) = \mathbf{0}$, which means $x = C_\epsilon(z)$ converges to some interior NE (which we may denote as $x^\star$, since $x^\star$ is arbitrary), which by assumption is a globally mere VSS. Global convergence follows from coercivity of $V(\xi, z)$.
	
	(ii)  Suppose instead $\Omega^p$ is compact for all players $p$. Let $x^\omega$ be an $\omega$-limit point of $x(t) = C_\epsilon(z(t))$ of MD2. Since $x(t)$ is bounded for all $t \geq 0$,  the existence of $x^\omega$ follows from Bolzano-Weierstrass theorem.  Suppose that $x^\omega$ is not a globally mere VSS $x^\star$. Take $\mathcal{O}_\omega$ be an open ball around $x^\omega$. By definition, there exists a sequence $\{x(t_k)\}_{k \in \mathbb{N}}$, $x(t_k) \in \mathcal{O}_\omega$, converging towards $x^\omega$,  where $\{t_k\}_{k \in \mathbb{N}}$ is an increasing sequence of times. Following the technique in \cite{Staudigl17}, we wish to build an auxiliary sequence in $\mathcal{O}_\omega$ and show that $V$ is unbounded below along this sequence, thereby obtaining a contradiction. 
	
	First, note that since $\vartheta^p$ is $\rho$-strongly convex, $\psi^p_\epsilon \coloneqq \epsilon\vartheta^p$ is $\epsilon \rho$-strong convex, and hence by the conjugate correspondence theorem \cite[Theorem 5.26]{Beck17}, ${\psi^p_\epsilon}^\star$ is $(\epsilon\rho)^{-1}$-smooth, i.e., $C^p_\epsilon = \nabla {\psi^p_\epsilon}^\star$ is $(\epsilon\rho)^{-1}$-Lipschitz. Let $\tau > 0$ be a time increment, then, by $z(t_k+ \tau) - z(t_k) = \int_{t_k}^{t_k + \tau} \dot z(s) \mathrm{d}s = \gamma \int_{t_k}^{t_k + \tau} U(x(s)) - \alpha(x(s) - \xi(s))  \mathrm{d}s$, we have,	
	\begin{align} 
		& \|x(t_k + \tau) - x(t_k)\|  = \|C_\epsilon(z(t_k + \tau) - C_\epsilon(z(t_k))\| \\
		& \leq (\epsilon\rho)^{-1} \|z(t_k + \tau) - z(t_k)\|_\star\\
		&\textstyle  \leq (\epsilon\rho)^{-1} \gamma \|\smallint_{t_k}^{t_k + \tau} U(x(s)) - \alpha(x(s) - \xi(s)) \mathrm{d}s\|_\star\\
		&\textstyle  \leq (\epsilon\rho)^{-1} \gamma \smallint_{t_k}^{t_k + \tau} \|U(x(s)\|_\star +  
		\alpha \|x(s) - \xi(s)\|_\star \mathrm{d} s\\
		& \leq  (\epsilon\rho)^{-1} \tau \gamma \max_{x \in \Omega} (\|U(x) \|_\star + \alpha \|x(s) - \xi(s)\|_\star)  \leq b(\tau),
	\end{align}  
	where $b(\tau)$ is a $\tau$-dependent upper-bound. Let $\mathcal{B}_\tau(x(t_k)) = \{x \in \Omega  | \|x - x(t_k)\| \leq b(\tau), x(t_k) \in \mathcal{O}_\omega\}$ be a ball around $x(t_k)$. Since $\mathcal{O}_\omega$ is open, therefore there exists some $\delta >0$, such that $\mathcal{B}_\tau(x(t_k)) \subseteq \mathcal{O}_\omega$ for all $\tau \in [0, \delta]$ and hence $x(t_k + \tau) \in \mathcal{O}_\omega$. 
	
	Integrating \eqref{eqn:MD2_proof_step}, we have,
	\begin{align}  
		\label{eqn:MD2_Lyapunov_function}
		V(\xi(t), z(t)) & \textstyle =  V_0 - \smallint_{0}^t \alpha\|x - \xi\|^2_2 + (x - x^\star)^\top U(x) \mathrm{d}\tau, 
	\end{align} 
	where $V_0 \coloneqq V(\xi(0), z(0))$ is some constant. Since $x^\star$ is assumed to be a globally mere VSS, therefore $(x -  x^\star)^\top U(x) \leq 0, \forall x$. Tossing out $(x -  x^\star)^\top U(x) \leq 0, \forall x$, \eqref{eqn:MD2_Lyapunov_function},  we obtain,
	\begin{align*}  
		V(\xi(t), z(t)) & \textstyle \leq  V_0 - \smallint_{0}^t \alpha  \| x(\tau) - \xi(\tau)\|^2_2 \mathrm{d}\tau.
	\end{align*}
	Expressing this inequality in terms of the sequence $\{x(t_k)\}_{k \in \mathbb{N}}$ and $\{\xi(t_k)\}_{k \in \mathbb{N}}$, $-\| x(\tau) - \xi(\tau)\|_2 \leq 0$, and using the $\delta$ which we have previous found, we have,
	\begin{align*}  
		V(\xi(t_k + \delta), z(t_k + \delta)) & \textstyle \leq  V_0 -  \sum_{\ell = 1}^k \smallint_{t_\ell}^{t_\ell + \delta} \alpha  \| x(\tau) - \xi(\tau)\|^2_2 \mathrm{d}\tau 
	\end{align*}
	for some $k$. Since $\Omega$ is compact, $x$ is continuous, $\xi$ is a continuous function of $x$ \eqref{eqn:individual_xi_p_expression}, and $\xi$ is not eventually identically equal to $x$ (otherwise, using the rest point condition for MD2 \eqref{eqn:MD2_rest_condition} with our assumption that all interior NE are globally mere, we arrive at $x^\omega = x^\star$ hence a contradiction), $\|x - \xi\|_2^2$ achieves a lower-bound on an interval $[t_\ell, t_\ell + \delta]$. Let $c = \max \{ l > 0 | \| x(\tau) - \xi(\tau)\|_2^2 > l, \tau \in [t_\ell, t_\ell + \delta], \ell = 1, \ldots, k\}$.  This means, 
	\begin{align*}  
		V(\xi(t_k + \delta), z(t_k + \delta)) \textstyle \leq  V_0 -  \sum_{\ell = 1}^k \smallint_{t_\ell}^{t_\ell + \delta} \alpha  c \mathrm{d}\tau
		\leq  V_0 -\alpha c \delta k,
	\end{align*}
	which shows that $V$ is unbounded below as $k \to \infty$, a contradiction. 
\end{proof}

\begin{lem}
	\label{lem:bregman_fenchel_equivalence}
	Suppose $\vartheta^p: \mathbb{R}^{n_p} \to \mathbb{R}\cup\{\infty\}$ satisfies \autoref{assump:regularizer}(i) or (ii). Let $h = \sum_{p \in \mathcal{N}} \vartheta^p$, ${x^p}^\star \in \dom(\vartheta^p), x^p \in \dom(\partial \vartheta^p), z^p \in \partial \psi_\epsilon^p(x^p), \psi_\epsilon^p = \epsilon \vartheta^p, \forall p$,  then, \begin{equation} \textstyle D_{h}(x^\star, x) = \epsilon^{-1}  \sum_{p \in \mathcal{N}} \psi_\epsilon^p({x^p}^\star)\! -\! {\psi_\epsilon^p}^\star(z^p) \!-\! {z^p}^\top {x^p}^\star, \end{equation} where $x^\star = ({x^p}^\star)_{p \in \mathcal{N}}, x = ({x^p})_{p \in \mathcal{N}}$. 
\end{lem}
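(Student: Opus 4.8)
The plan is to reduce the claim to the definition of the Bregman divergence and then invoke the equality case of the Fenchel--Young inequality, i.e., the conjugate subgradient theorem quoted in Section II. First I would write out, directly from the definition of $D_h$, the expression $D_h(x^\star, x) = h(x^\star) - h(x) - g^\top(x^\star - x)$ for some subgradient $g \in \partial h(x)$. Since $h = \sum_{p \in \mathcal{N}} \vartheta^p$ is additively separable across the players, its subdifferential factorizes as $\partial h(x) = \prod_{p \in \mathcal{N}} \partial \vartheta^p(x^p)$, so I may assemble a global subgradient $g = (g^p)_{p \in \mathcal{N}}$ from per-player subgradients $g^p \in \partial \vartheta^p(x^p)$; the hypothesis $x^p \in \dom(\partial \vartheta^p)$ guarantees each of these sets is nonempty.

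The next step is to reconcile the scaling by $\epsilon$. Because $\psi^p_\epsilon \coloneqq \epsilon \vartheta^p$ with $\epsilon > 0$, we have $\vartheta^p = \epsilon^{-1}\psi^p_\epsilon$ and $\partial \psi^p_\epsilon(x^p) = \epsilon\,\partial \vartheta^p(x^p)$, so the given dual vector $z^p \in \partial \psi^p_\epsilon(x^p)$ supplies the admissible choice $g^p = \epsilon^{-1} z^p$. Substituting these into the expanded divergence produces, for each player $p$, the contribution $\epsilon^{-1}\bigl[\psi^p_\epsilon({x^p}^\star) - \psi^p_\epsilon(x^p) - {z^p}^\top {x^p}^\star + {z^p}^\top x^p\bigr]$. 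I would then apply the conjugate subgradient theorem to the proper, closed, convex function $\psi^p_\epsilon$: from $z^p \in \partial \psi^p_\epsilon(x^p)$ we obtain the Fenchel equality ${z^p}^\top x^p = \psi^p_\epsilon(x^p) + {\psi^p_\epsilon}^\star(z^p)$, hence ${z^p}^\top x^p - \psi^p_\epsilon(x^p) = {\psi^p_\epsilon}^\star(z^p)$. Collecting the remaining terms and summing over $p \in \mathcal{N}$ then yields the asserted identity, the per-player summand reducing to $\psi^p_\epsilon({x^p}^\star) + {\psi^p_\epsilon}^\star(z^p) - {z^p}^\top {x^p}^\star$ (which is precisely the Fenchel-coupling part appearing in the Lyapunov function \eqref{eqn:thm_main_lyapunov}, up to the factor $\epsilon^{-1}$).

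The step requiring the most care --- though it is scarcely an obstacle --- is the justification of the two structural facts that drive the computation: the additive separability of $\partial h$, which licenses building a single global subgradient out of per-player pieces, and the use of the \emph{equality} rather than merely the inequality form of Fenchel--Young. The latter is legitimate exactly because the hypotheses place $z^p$ in $\partial \psi^p_\epsilon(x^p)$ rather than at an arbitrary dual point, and because $\psi^p_\epsilon$ inherits closedness, properness, and convexity from $\vartheta^p$ under either branch of \autoref{assump:regularizer}; these are precisely the conditions under which the conjugate subgradient theorem applies. Everything else is bookkeeping of the $\epsilon^{-1}$ factors and is routine.
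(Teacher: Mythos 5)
Your proof is correct and takes essentially the same route as the paper's: expand the Bregman divergence with the per-player subgradient $g^p = \epsilon^{-1}z^p \in \partial\vartheta^p(x^p)$ (using $\partial\psi^p_\epsilon = \epsilon\,\partial\vartheta^p$ and separability of $h$), apply the conjugate subgradient theorem in its equality form, and bookkeep the $\epsilon$ factors --- the only cosmetic difference being that the paper applies Fenchel equality to $\vartheta^p$ and rescales at the end via ${\psi^p_\epsilon}^\star(z^p) = \epsilon\,{\vartheta^p}^\star(\epsilon^{-1}z^p)$, whereas you rescale first and apply it to $\psi^p_\epsilon$ directly, which spares you that conjugate-scaling identity. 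One remark: your final summand $\psi^p_\epsilon({x^p}^\star) + {\psi^p_\epsilon}^\star(z^p) - {z^p}^\top {x^p}^\star$ has a plus sign on the conjugate term, which agrees with the last line of the paper's own proof and with the Fenchel coupling in \eqref{eqn:thm_main_lyapunov}, but not with the minus sign printed in the lemma statement --- that sign in the statement is a typo in the paper, not a gap in your argument.
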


\begin{proof} \textbf{(Proof of \autoref{lem:bregman_fenchel_equivalence})}
	By definition,
	\begin{align} \hspace{-0.26cm} D_h(x^\star, x) & \textstyle =  \sum_{p \in \mathcal{N}} \vartheta^p({x^p}^\star) \!- \vartheta^p({x^p}) \!- \epsilon^{-1} {z^p}^\top({x^p}^\star \!- x^p) \\
		& \textstyle =  \sum_{p \in \mathcal{N}} \vartheta^p({x^p}^\star) + {\vartheta^p}^\star(\epsilon^{-1} z^p) - \epsilon^{-1} {z^p}^\top {x^p}^\star\\
		& \textstyle = \epsilon^{-1}  \sum_{p \in \mathcal{N}} \epsilon \vartheta^p({x^p}^\star) + \epsilon {\vartheta^p}^\star(\epsilon^{-1} z^p) - {z^p}^\top {x^p}^\star \\
		& \textstyle = \epsilon^{-1}  \sum_{p \in \mathcal{N}} \psi_\epsilon^p({x^p}^\star) + {\psi_\epsilon^p}^\star(z^p) - {z^p}^\top {x^p}^\star,
	\end{align}
	where we used $\psi_\epsilon^p({x^p}^\star)  = \epsilon \vartheta^p({x^p}^\star), z^p \in \partial \psi_\epsilon^p(x^p) = \epsilon \partial \vartheta^p(x^p),  {\psi_\epsilon^p}^\star(z^p) = \epsilon {\vartheta^p}^\star(\epsilon^{-1} z^p), {x^p} \in \partial {\psi_\epsilon^p}^\star(z^p)$. We note that ${\psi_\epsilon^p}^\star(z^p)  = \epsilon {\vartheta^p}^\star(\epsilon^{-1} z^p)$ follows from the conjugate property $\epsilon \vartheta^p(x^p) \Longleftrightarrow \epsilon \vartheta^{p\star}(\epsilon^{-1} z^p)$ \cite[p. 93]{Beck17}. 
\end{proof}

\begin{proof}(\textbf{Proof of \autoref{thm:MD2_gamma}}) Consider the following Lyapunov function, 
	\begin{align*} 
		\textstyle	V(\xi, \!z, \!\gamma)  & \textstyle \!=\! \dfrac{\alpha\gamma}{2\beta}\|\xi - x^\star\|_2^2 \!+ \! \sum_{p \in \mathcal{N}} \psi^p_\epsilon({x^p}^\star) - {z^p}^\top {x^p}^\star\! +\! \psi_\epsilon^{p\star}(z^p),
	\end{align*} 
	Then taking the time-derivative of $V(\xi, z, \gamma)$ along MD2$\gamma$, and using $\nabla {\psi^p_\epsilon}^\star = {C^p_\epsilon}$, $x^p = {C^p_\epsilon}(z^p)$, $\dot z = U(x) - \gamma \alpha \beta^{-1}\dot \xi$,  we have, 
	\begingroup
	\allowdisplaybreaks
	\begin{align*}
		& \dot V(\xi, z, \gamma) =  \dfrac{\alpha\dot \gamma}{2\beta}\|\xi \!-\! x^\star\|_2^2 \!+\! \dfrac{\alpha \gamma }{\beta}  (\xi - x^\star)^\top \dot \xi \!+\! (C_\epsilon(z) \!-\! x^\star)^\top \dot z\\
		& = \!\dfrac{\alpha\dot \gamma}{2\beta}\|\xi \!-\! x^\star\|_2^2 \!+\! \dfrac{\alpha\gamma}{\beta}  (\xi \!-\! x^\star \!-\! x \!+\! x^\star)^\top \dot \xi \!+\! (x\! - \!x^\star)^\top U(x)\\
		& = \!\dfrac{-\alpha \eta\gamma}{2\beta \epsilon}\|\xi - x^\star\|_2^2 -\dfrac{\alpha\gamma}{\beta^2}\|\dot \xi\|_2^2 + (x - x^\star)^\top U(x)\\
		& \leq  \! \dfrac{-\alpha \eta\gamma}{2\beta  \epsilon}\|\xi - x^\star\|_2^2   +(x - x^\star)^\top U(x)\\
		&\textstyle \leq \! \dfrac{-\alpha \eta\gamma}{2\beta \epsilon}\|\xi - x^\star\|_2^2  -  \eta (D_{h}(x^\star, x)+D_{h}(x,x^\star))\\
		&  \textstyle \leq \! \dfrac{-\alpha \eta\gamma}{2\beta \epsilon}\|\xi - x^\star\|_2^2  - \eta D_{h}(x^\star, x)\\
		&\textstyle = \! -\eta \epsilon^{-1} (\dfrac{ \alpha \gamma }{2\beta}\|\xi - x^\star\|_2^2  + \sum_{p \in \mathcal{N}} \psi_\epsilon^p({x^p}^\star)\! -\! {\psi_\epsilon^p}^\star(z^p) \!-\! {z^p}^\top {x^p}^\star )\\
		&= \textstyle	 -\eta \epsilon^{-1}V(\xi, z, \gamma), \numberthis \label{eqn:time_varying_V_dot}
	\end{align*} 
	where we have used $D_{h}(x^\star, x) = \epsilon^{-1}  \sum_{p \in \mathcal{N}} \psi_\epsilon^p({x^p}^\star)\! -\! {\psi_\epsilon^p}^\star(z^p) \!-\! {z^p}^\top {x^p}^\star ), \forall x = C_\epsilon(z), z \in C_\epsilon^{-1}(x)$ (\autoref{lem:bregman_fenchel_equivalence}).  \eqref{eqn:time_varying_V_dot} implies $V(\xi, z, \gamma) \leq e^{-\eta \epsilon^{-1} t}V(\xi_0, z_0, \gamma_0)$. Substituting in the exact expression for $V(\xi, z, \gamma)$ and $V(\xi_0, z_0, \gamma_0)$, then rearranging the resulting expression, we obtain \eqref{eqn:MD2_rate}. \eqref{eqn:MD2_rate_sc} follows from $D_h(x^\star, x) \geq 2^{-1}\rho\|x^\star - x\|_2^2$ whenever $\vartheta^p$ is $\rho$-strongly convex.
	\endgroup
\end{proof}

\begin{proof}(\textbf{Proof of \autoref{thm:no_regret}})
	Without loss of generality, let $\gamma = 1$. Let $y^p \in \Omega^p$ be an arbitrary strategy. Since $\mathcal{U}^p(y^p; x^{-p}(t))$ is concave in $y^p$, $\mathcal{U}^p(y^p; x^{-p}(t)) \leq \mathcal{U}^p(x^p(t); x^{-p}(t)) + \nabla_{x^p}\mathcal{U}^p(x^p(t); x^{-p}(t))^\top(y^p - x^p(t))$ =  $\mathcal{U}^p(x^p(t); x^{-p}(t)) + U^p(x)^\top(y^p - x^p(t))$, $\forall t \geq 0$, therefore, the integral term of $\eqref{eqn:regret}$, can be written as (suppressing time-index for brevity), 
	\begin{align}
		& \textstyle \int_{0}^{t}   \mathcal{U}^p(y^p; x^{-p}) - \mathcal{U}^p(x)  \mathrm{d}\tau \leq \int_{0}^{t} ({y^p} - {x^p})^\top U^p(x) \mathrm{d}\tau,	\label{eqn:regret_proof_part_1} 
	\end{align} 
	Next, define, \begin{equation} \mathpzc{Q}^p(t) \coloneqq {\psi^p_\epsilon}^\star(z^p(t)) - {y^p}^\top z^p(t)  + \dfrac{\alpha}{\beta}(\dfrac{1}{2}\|\xi^p(t)\|_2^2 - { y^p}^\top {\xi^p(t)}). \numberthis \label{eqn:primal-dual-primal} \end{equation}  Taking the time-derivative of $\mathpzc{Q}^p$ along the solutions of MD2 and using $\nabla {\psi^p_\epsilon}^\star = {C^p_\epsilon}$, $x^p = {C^p_\epsilon}(z^p)$, $\dot z^p = U^p(x) - \beta^{-1}\alpha\dot \xi^p$, yields, 
	\begingroup
	\allowdisplaybreaks
	\begin{align} \dot{\mathpzc{Q}}^p(t) 
		&\!=\! \nabla {\psi^p_\epsilon}^\star(z^p)^\top \dot z^p  \!-\! {y^p}^\top {\dot z^p} + \dfrac{\alpha}{\beta}({\xi^p}^\top \dot \xi^p \!-\!{y^p}^\top \dot \xi^p)  \\ 
		& = {C^p_\epsilon}(z^p)^\top \dot z^p  - {y^p}^\top {\dot z^p} + \dfrac{\alpha}{\beta}({\xi^p}^\top \dot \xi^p  -{y^p}^\top \dot \xi^p)  \\ 
		& = ({x^p}- y^p)^\top \dot z^p + \dfrac{\alpha}{\beta}(\xi^p - y^p)^\top \dot \xi^p \\
		& = ({x^p} \!-\! y^p)^\top (U^p(x) \!-\! \dfrac{\alpha}{\beta}\dot \xi^p)+ \dfrac{\alpha}{\beta}(\xi^p \!-\! y^p)^\top \dot \xi^p \\
		& = ({x^p} - y^p)^\top U^p(x) + \dfrac{\alpha}{\beta}(-{x^p} + \xi^p)^\top \dot  \xi^p\\
		& =  ({x^p} - y^p)^\top U^p(x)  - \dfrac{\alpha}{\beta^2} \|\dot \xi^p\|_2^2.
		\label{eqn:regret_proof_part_2}
	\end{align} 
	\endgroup
	Carrying on from \eqref{eqn:regret_proof_part_1} and using \eqref{eqn:regret_proof_part_2}, we have,
	\begingroup
	\allowdisplaybreaks
	\begin{align} \hspace*{-0.1in}
		\textstyle \int_{0}^{t}  \mathcal{U}^p(y^p; x^{-p}) -\mathcal{U}^p(x)  \mathrm{d}\tau  
		&\textstyle  = \int_{0}^{t} - {\dot{\mathpzc{Q}}}^p(\tau) -  \dfrac{\alpha}{\beta^2} \|\dot \xi^p\|_2^2\mathrm{d}\tau\\
		& \leq  \mathpzc{Q}^p(0) - \mathpzc{Q}^p(t).
	\end{align}
	\endgroup
	Applying Fenchel's inequality \cite{Bauschke97} to the function $\mathpzc{Q}^p$, we obtain, $\mathpzc{Q}^p(t)  \geq -\psi^p_\epsilon(y^p)  - \dfrac{\alpha}{2\beta} \|y^p\|_2^2.$
	Hence $\mathpzc{Q}^p(0) - \mathpzc{Q}^p(t) \leq  \mathpzc{Q}^p(0) + \psi^p_\epsilon(y^p) +  \dfrac{\alpha}{2\beta} \|y^p\|_2^2 $, where $\mathpzc{Q}^p(0)$ is  some finite constant (due to \autoref{lem:mirror_map_properties}(i)).  Therefore we obtain the following inequality, 
	\begingroup
	\allowdisplaybreaks
	\begin{align}
		\hspace*{-0.3cm} \textstyle \int_{0}^{t}\!   \mathcal{U}^p(y^p; x^{-p})\! -\! \mathcal{U}^p(x)  \mathrm{d}\tau\
		&\! \leq \!  \mathpzc{Q}^p(0)\!  +\! \psi^p_\epsilon(y^p)\! +\!  \dfrac{\alpha}{2\beta} \|y^p\|_2^2.
		\label{eqn:regret_proof_part_3}
	\end{align} 
	\endgroup 
	Plugging into \eqref{eqn:regret_proof_part_3} the definition of regret, we obtain, 
	\begingroup
	\allowdisplaybreaks
	\begin{align*} 
		\mathpzc{R}^p(t)  & \textstyle = \max\limits_{y^p \in \Omega^p} t^{-1} \int_{0}^{t} \mathcal{U}^p(y^p; x^{-p}(\tau)) - \mathcal{U}^p(x(\tau))  \mathrm{d}\tau \\
		& \leq \max\limits_{y^p \in \Omega^p} t^{-1} (\mathpzc{Q}^p(0)  + \psi^p_\epsilon(y^p) +  \dfrac{\alpha}{2\beta} \|y^p\|_2^2). \numberthis 		\label{eqn:regret_proof_final_step}
	\end{align*} 
	\endgroup 
	Since $\Omega^p$ is assumed to be compact, and the numerator is continuous on all of $\Omega^p$, therefore by Weierstrass theorem \cite[Theorem 2.12]{Beck17} a maximum is achieved. Taking the limsup yields the desired result. When $\alpha = 0$, this proof recovers no-regret bound of MD \cite{Mertikopoulos18}. 
\end{proof}

\begin{proof} (\textbf{Proof of \autoref{prop:primal_dynamics_general}})
	By \autoref{lem:mirror_map_properties}(ii), $x^p = C_\epsilon^p(z^p) = \nabla {\psi_\epsilon^p}^\star(z^p)$, therefore $x^p \in \partial {\psi^p_\epsilon}^\star(z^p)$. Using the fact that $\partial {\psi_\epsilon^p}$ is the inverse map of $\partial {\psi^p_\epsilon}^\star$, for all $x^p \in \dom(\partial \psi^{p\star}_\epsilon) = \rinterior(\Omega^p)$, \begin{equation}z^p \in \partial \psi^p_\epsilon(x^p)\! =\! \{\nabla \psi^p_\epsilon(x^p)\} + N_{\Omega^p}(x^p) \label{eqn:sub_diff_psi_p}, \end{equation} 
	From \eqref{eqn:sub_diff_psi_p}, we can write $z^p = \nabla \psi^p_\epsilon(x^p) + n^p(x^p)$ where $n^p(x^p) \in N_{\Omega^p}(x^p)$. Plugging this expression into \eqref{eqn:primal_first_order_MD2_p}, we have, 
	\begin{equation}
		\hspace*{-0.25cm}
		\begin{cases}
			\dot \xi^p &\!  \in \beta(x^p - \xi^p)\\
			\dot x^p  &\! \in \gamma \nabla^2 {\psi^p_\epsilon}^\star(\nabla \psi^p_\epsilon(x^p)\!+\!n^p(x^p)) (U^p(x)\! -\! \alpha (x^p \!-\! \xi^p)).  
		\end{cases} 
		\label{eqn:primal_first_order_MD2_inclusion}
	\end{equation}
	
	To reduce \eqref{eqn:primal_first_order_MD2_inclusion} from a set of differential inclusions to a set of ODEs, we will show, for all $x^p \in \rinterior(\Omega^p)$,   \begin{equation} \nabla^2 {\psi^p_\epsilon}^\star(\nabla \psi^p_\epsilon(x^p)+n^p(x^p)) =  \nabla^2 {\psi^p_\epsilon}^\star(\nabla \psi^p_\epsilon(x^p)). \end{equation} Indeed,  $x^p \in \partial \psi^{p\star}_\epsilon(z^p) \Longleftrightarrow z^p \in \partial \psi^{p}_\epsilon(x^p) \overset{\eqref{eqn:sub_diff_psi_p}}{\Longleftrightarrow} \nabla \psi^p_\epsilon(x^p) + n^p(x^p) \in   \partial \psi^{p}_\epsilon(x^p)$. The last equation, $\nabla \psi^p_\epsilon(x^p) + n^p(x^p) \in   \partial \psi^{p}_\epsilon(x^p)$ means, $\forall y^p \in \Omega^p$
	$\psi^{p}_\epsilon(y^p) \geq \psi^{p}_\epsilon(x^p) + (\nabla \psi^p_\epsilon(x^p) + n^p(x^p))^\top(y^p - x^p)$. Since $n^p(x^p) \in N_{\Omega^p}(x^p)$, therefore $n^p(x^p)^\top(y^p - x^p) \leq 0$, hence it is also true that $\psi^{p}_\epsilon(y^p) \geq \psi^{p}_\epsilon(x^p) + \nabla \psi^p_\epsilon(x^p)^\top (y^p - x^p)$ or equivalently, $\nabla \psi^p_\epsilon(x^p) \in \partial \psi^p_\epsilon(x^p)$, or $\partial \psi^{p\star}_\epsilon(\nabla \psi^p_\epsilon(x^p)) = \partial \psi^{p\star}_\epsilon(\partial \psi^p_\epsilon(x^p))$. Hence by re-substituting in \eqref{eqn:sub_diff_psi_p} and using the single-valuedness of $\partial \psi^{p\star}_\epsilon$, we obtain, $\nabla {\psi_\epsilon^p}^\star(\nabla {\psi^p_\epsilon}(x^p) + n^p(x^p)) =  \nabla {\psi_\epsilon^p}^\star(\nabla {\psi^p_\epsilon}(x^p))$. Taking the Jacobian of the preceding equality yields our desired result. 
\end{proof}

The proof of \autoref{thm:MD2_discrete_convergence} requires the following definitions from \cite{Benaim_1999, Benaim_Faure12, Benaim_Hofbauer_Sorin05}. Recall that a \textit{semiflow} $\Phi$ on a metric space $(\mathcal{M},d)$ is a continuous map $\Phi: \mathbb{T} \times \mathcal{M} \to \mathcal{M}, (t, x) \mapsto \Phi(t,x) = \Phi_t(x)$, where $\mathbb{T} =  \mathbb{R}_{\geq 0}$, such that, $\Phi_0 = Id$ and $\Phi_{t+s} = \Phi_t \circ \Phi_s$, for all $(t,s) \in \mathbb{R}_{\geq 0} \times \mathbb{R}_{\geq 0}$. If $\mathbb{T} = \mathbb{R}$, then $\Phi$ defines a flow. We say that a continuous function $f: \mathbb{R}_{>0} \to \mathcal{M}$ is an \textit{asymptotic pseudotrajectory} of $\Phi$ if $\lim\limits_{t \to \infty} \sup_{0 \leq h \leq T} d(f(t+h), \Phi_h(f(t))) = 0$ for any $T > 0$. A set $\mathcal{A} \subset \mathcal{M}$ is \textit{positively invariant} if $\Phi_t(\mathcal{A}) \subset \mathcal{A}$ for all $t \geq 0$ and \textit{invariant} if $\Phi_t(\mathcal{A}) = \mathcal{A}$ for all $t \in \mathbb{T}$.  

\begin{proof} (\textbf{Proof of \autoref{thm:MD2_discrete_convergence}}) For simplicity, we assume that all constants associated with MD2 are set to $1$. We break up our proof into the following steps:
	\begin{enumerate} 
		\item We first need to show that DA2 is the stochastic approximation of MD2.

		Rearranging the $Z^p_k$ update for DA2, $\dfrac{Z^p_{k+1}  -  Z^p_k}{ t_{k+1}} =   \widehat U^p_{k+1} = U^p(X_k) + \zeta^p_{k+1}$ and taking the expectation with respect to the filtration $\mathcal{F}_k = \sigma(\Xi_0, Z_0, \zeta_0, \ldots, \zeta_k)$ yields, 
		\begin{equation}
			\E(\dfrac{Z^p_{k+1}  -  Z^p_k}{ t_{k+1}}| \mathcal{F}_k) =  \E(U^p(X_k) + \zeta^p_{k+1} -  (X^p_k - \Xi^p_k)| \mathcal{F}_k),
		\end{equation}
		Since $\E(U^p(X_k)|\mathcal{F}_k) = U^p(X_k) - (X^p_k - \Xi^p_k)$ and $\E(\zeta^p_{k+1}|\mathcal{F}_k) = 0$ by \autoref{assump:MD2_noise_assump}, therefore, 
		\begin{equation}
			\E(\dfrac{Z^p_{k+1}  -  Z^p_k}{ t_{k+1}}| \mathcal{F}_k)  = U^p(X_k) - (X^p_k - \Xi^p_k),
		\end{equation}
		And similarly for $\Xi_{k+1}^p$, we have, 
		\begin{equation}
			\E(\dfrac{\Xi^p_{k+1}  -  \Xi^p_k}{ \tau_{k+1}}| \mathcal{F}_k)  = X^p_k - \Xi^p_k,
		\end{equation}
		which shows that MD2 is the mean ODE of DA2.
		
		\item Next, we need to build interpolated processes for $Z_k$ and $\Xi_k$ and show that the interpolated processes converge to a rest point of MD2. The full construction process is provided as follows: let  $\{\ell_k\}_{ k \in \mathbb{N}}$ be a sequence such that,
		\begin{equation}
			\textstyle \ell_0 = 0 \quad \ell_k = \sum_{i = 1}^k t_i, k \geq 1,
		\end{equation} and define the interpolated process, 
		\begin{equation}  \textstyle \underline Z^p(\ell_k + s) = Z^p_k + s\frac{Z^p_{k+1} - Z^p_k}{\ell_{k+1} - \ell_k},\end{equation} where $0 \leq s < t_{k+1}$. Similarly, let $\{\ell^\prime_k\}_{ k \in \mathbb{N}}$ be a sequence such that,
		\begin{equation}
			\textstyle \ell^\prime_0 = 0 \quad \ell^\prime_k = \sum_{i = 1}^k \tau_i, k \geq 1,
		\end{equation} and define the interpolated process associated with $\Xi^p$ as, \begin{equation} \textstyle \underline \Xi^p(\ell^\prime_k + s^\prime ) = \Xi^p_k + s^\prime\frac{\Xi^p_{k+1} - \Xi^p_k}{\ell^\prime_{k+1} - \ell^\prime_k},\end{equation} where $0 \leq s^\prime < \tau_{k+1}$. 
		\item Let  $\underline Z^p: \mathbb{R}_{\geq 0} \to \mathbb{R}^{n_p}$ and $\underline \Xi^p: \mathbb{R}_{\geq 0} \to \mathbb{R}^{n_p}$  denote the continuous functions (as functions of $s, s^\prime$ respectively) associated with the above processes and let $\underline Z = (\underline Z^p)_{p \in \mathcal{N}}, \underline \Xi = (\underline \Xi^p)_{p \in \mathcal{N}}$ be the stacked-vector of all the individual interpolated processes.
		\item  Next, define the overall process,
		\begin{equation}
			\underline W: \mathbb{R}_{\geq 0} \to \mathbb{R}^{2n} \quad \underline W = (\underline \Xi,  \underline Z), 
		\end{equation}  Define the interpolated process $\underline{W}$.  By Proposition 4.1 and Proposition 4.2 of \cite{Benaim_1999}, under \autoref{assump:MD2_l2_summability} - 8 $\underline W$ is an asymptotic pseudo-trajectory of the semiflow $\Phi: \mathbb{R}_{\geq 0} \times \mathbb{R}^{2n} \to \mathbb{R}^{2n}$ induced by $\dot \omega = F(\omega)$ \eqref{eqn:overall_system_MD2},  that is, 
		\begin{equation}
			\textstyle \lim_{t \to \infty} \sup_{0 \leq h \leq T} \|\underline W(t+h) -  \Phi(h, \underline W(t))\|_2 = 0,
		\end{equation}
		for any $T > 0$. 
		\item  By \autoref{assump:MD2_bounded_iterates}, $\underline W$ has compact closure, i.e., is \textit{pre-compact}.  Since we have shown that  $\underline W$ is a pre-compact APT of $\Phi$ induced by $\dot \omega$, by \cite[Theorem 5.7(i)]{Benaim_1999} the limit set \begin{equation} \textstyle L(\underline W) = \bigcap_{t \geq 0} \closure({ \underline{W}([t, \infty))}),\end{equation}  is internally chain transitive, which by \cite[Prop 5.3]{Benaim_1999}, $L(\underline W)$ is a compact invariant set.
		\item Recall that $V: \mathbb{R}^{2n} \to \mathbb{R}$ was used as the  Lyapunov function associated with  $\dot \omega = F(\omega)$ in \autoref{thm:main}. Consider the set of critical points of $V$,  $\mathcal{E} = \{(\xi, z) \in \mathbb{R}^{2n} | \dot V = 0\} = \{(\xi, z) \in \mathbb{R}^n \times \mathbb{R}^n | \xi = x = C(z)\}$. By the arguments in the proof of \autoref{thm:main}, since $V(\xi^\prime, z^\prime) < V(\xi, z)$ for all $(\xi, z) \in \mathbb{R}^{2n} \backslash \mathcal{E}$, $(\xi^\prime, z^\prime) = \Phi(t, (\xi, z))$ and $V(\xi^\prime, z^\prime) \leq V(\xi, z)$ for all $(\xi, z) \in \mathcal{E}, (\xi^\prime, z^\prime) = \Phi(t, (\xi, z))$, therefore $V$ is a Lyapunov function for $\mathcal{E}$ \cite{Benaim_Hofbauer_Sorin05}.  
		\item Since $V(\xi, z)$ is a Lyapunov function for $\mathcal{E} \subset \mathbb{R}^{2n}$, and $V(\mathcal{E})$ is a constant, which implies $\interior(V(\mathcal{E})) = \emptyset$, therefore by  \cite[Prop. 3.27]{Benaim_Hofbauer_Sorin05}, $L(\underline W)$ is contained in $\mathcal{E}$. By our assumption that every NE is a mere VSS, hence every point in $\mathcal{E}$ corresponds to an interior mere VSS and $L(\underline W) \subset \mathcal{E}$, therefore $L(\underline W)$  contains a compact subset of the rest points of $\dot \omega$.
		\item By the definition of a limit set, for any $\underline W(0)$, the interpolated process $\underline W(t)$ converges as $t \to \infty$. 
		\item From our construction of the interpolated process and the diminishing step-size assumption, i.e., $0 \leq s < \tau_{k+1}$, $0 \leq s^\prime < t_{k+1}$ and $\lim_{k\to \infty} t_k =  \lim_{k \to \infty} \tau_k = 0$, the convergence of the interpolated processes $\underline \Xi$ and $\underline Z$ implies the convergence of $\Xi_k$, $Z_k$ respectively. 
		\item By continuity of $C_\epsilon$, it follows that $X_k = C_\epsilon(Z_k)$ converges almost surely an interior mere VSS $x^\star$. 
	\end{enumerate}
\end{proof}

\nopagebreak
\begin{IEEEbiography}[{\includegraphics[width=0.98in]{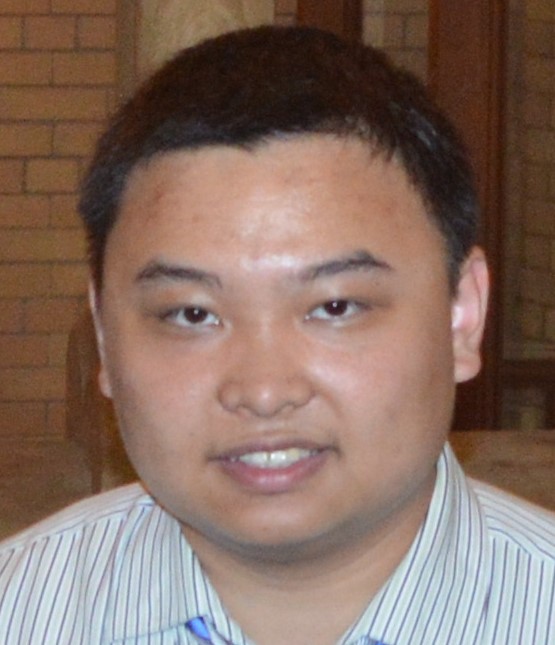}}]{Bolin Gao}
	received the B.A.Sc. (with Hons.) and M.A.Sc. degrees in electrical engineering in
	2015 and 2017, respectively, from the University of Toronto, Toronto, ON, Canada, where he is currently working toward the Ph.D. degree with
	the Systems Control Group. His research interests include design and control of game algorithms with applications to	reinforcement learning in multiagent systems.
\end{IEEEbiography}

\vspace{-15cm} 
\begin{IEEEbiography}[{\includegraphics[width=1in]{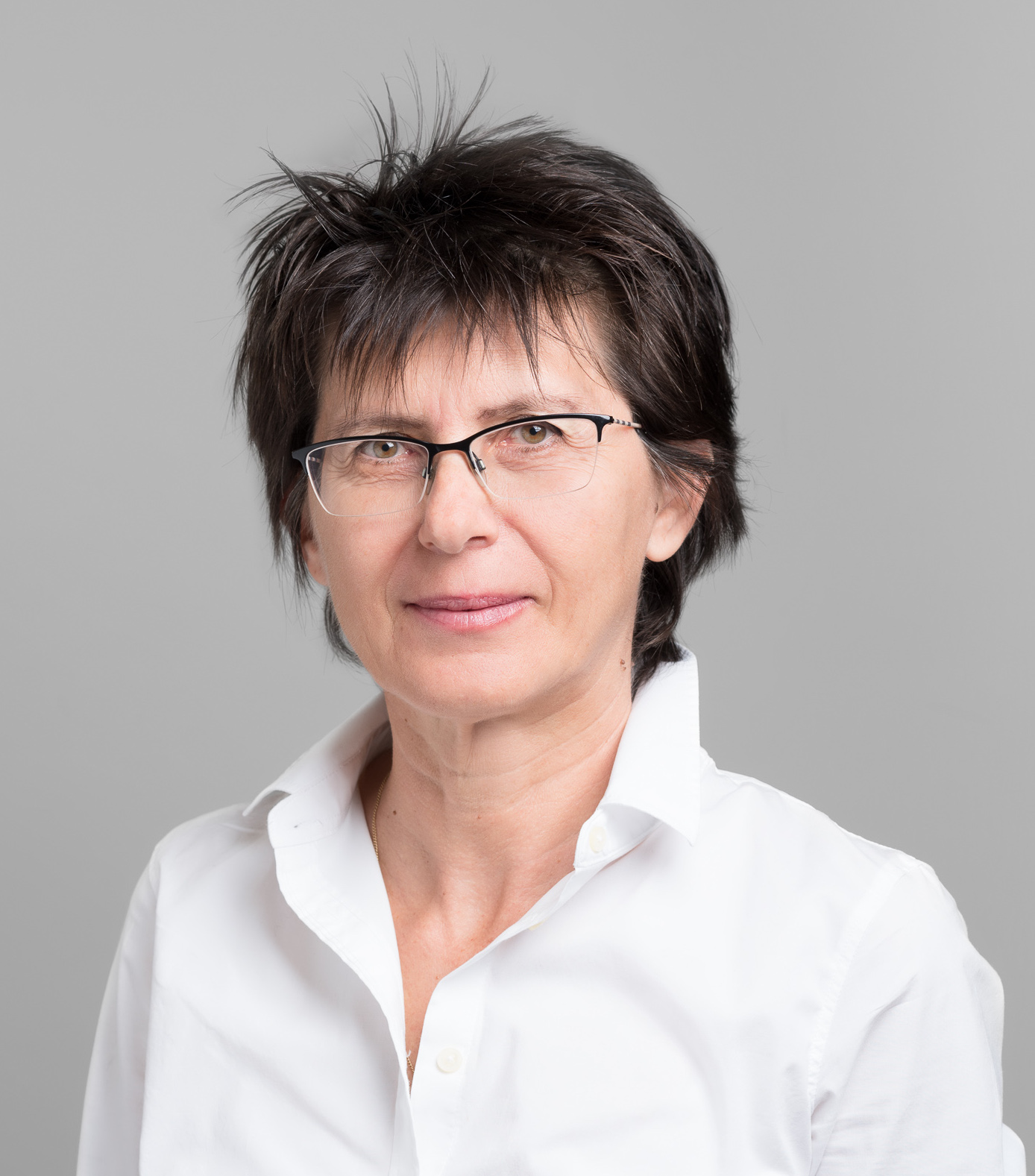}}]{Lacra Pavel}
	(M'92 - SM'04)  received the Dipl. Engineer from Technical University of Iasi, Romania and the Ph.D. degree in Electrical Engineering from Queen's University, Canada.  After a postdoctoral stage at the National Research Council and four years of industry experience, in 2002 she joined University of Toronto, where she is now a Professor in the Systems Control Group, Department of Electrical and Computer Engineering. Her research interests are in game theory and distributed optimization in networks, with emphasis on dynamics and control.  She is the author of  the book {\em Game Theory for Control of Optical Networks} (Birkh\"{a}user-Springer Science, ISBN 978-0-8176-8321-4, 2012). She is a Senior Editor of \emph{IEEE Transactions on Control of Network Systems}, a Senior Editor of the IEEE Open Journal of Control Systems and a Member of the Conference Editorial Board of the IEEE Control Systems Society; she acted as Publications Chair of the 45th IEEE Conference on Decision and Control.
\end{IEEEbiography}

\end{document}